\documentclass[a4paper,11pt]{amsart}

\DeclareRobustCommand{\SkipTocEntry}[5]{}

\usepackage{amsmath}
\usepackage{bm}

\usepackage{upgreek}
\usepackage{ esint }
\usepackage{color}
\usepackage{comment}
\usepackage{amssymb}
\usepackage{amsfonts}

\usepackage{amscd}

\usepackage{slashed}
\usepackage{pdflscape}
\usepackage{tikz}
\usepackage{enumerate}
\usepackage{multirow}
\usepackage{stmaryrd}

\usepackage{bbold}
\usepackage[linkcolor=blue]{hyperref}
\usepackage{mathrsfs}

\setlength{\textwidth}{418pt}
\setlength{\oddsidemargin}{17.5pt}
\setlength{\evensidemargin}{17.5pt}

\newtheorem{theorem}{Theorem}[section]
\newtheorem{lemma}[theorem]{Lemma}
  
\newtheorem{proposition}[theorem]{Proposition}
\newtheorem{corollary}[theorem]{Corollary}

\theoremstyle{definition}
\newtheorem{definition}[theorem]{Definition}

\theoremstyle{remark}
\newtheorem{remark}[theorem]{Remark}

\usepackage{pdfsync}

\usepackage{graphicx} 
\usepackage{amsmath} 
\usepackage{amsfonts}
\usepackage{amssymb}

\newcommand{\be}{\begin{equation}}

\newcommand{\ee}{\end{equation}}

\newcommand{\Odane}{\O}

\newcommand{\II}{{ I\hspace{-.8mm}I}}
\newcommand{\IIo}{\mathring{\!{ I\hspace{-.8mm} I}}{\hspace{.2mm}}}

\newcommand{\si}{\sigma}

\newcommand{\ba}{\begin{array}}

\newcommand{\ea}{\end{array}}

\newcommand{\beq}{\begin{eqnarray}}

\newcommand{\eeq}{\end{eqnarray}}

\newtheorem{lm}{lemma}

\newtheorem{thee}{theorem}

\newtheorem{proo}{proposition}

\newtheorem{co}{corollary}

\newtheorem{rem}{remark}

\newtheorem{deff}{definition}

\newcommand{\bd}{\begin{deff}}

\newcommand{\ed}{\end{deff}}

\newcommand{\bl}{\begin{lm}}

\newcommand{\el}{\end{lm}}

\newcommand{\bp}{\begin{proo}}

\newcommand{\ep}{\end{proo}}

\newcommand{\bt}{\begin{thee}}

\newcommand{\et}{\end{thee}}

\newcommand{\bc}{\begin{co}}

\newcommand{\ec}{\end{co}}

\newcommand{\brm}{\begin{rem}}

\newcommand{\erm}{\end{rem}}

\hyphenation{Pa-wel}

\hyphenation{Nu-row-ski}

\hyphenation{And-rzej}

\hyphenation{Traut-man}

\hyphenation{Je-rzy}

\hyphenation{Le-wan-dow-ski}

\hyphenation{Car-tan}

\hyphenation{Car-tan-Pet-rov-Pen-rose}

\hyphenation{Pen-rose}

\hyphenation{or-tho-go-nal}

\hyphenation{comp-lex}

\hyphenation{Pet-rov}

\hyphenation{Euc-lid-ean}

\hyphenation{ge-om-etry}

\hyphenation{Rie-man-nian}

\hyphenation{Ein-stein}

\hyphenation{Ka-te-dra} 

\hyphenation{Me-tod} 

\hyphenation{Ma-te-ma-tycz-nych}

\hyphenation{Fi-zy-ki}

\hyphenation{Uni-wer-sy-tet} 

\hyphenation{War-szaw-ski} 

\hyphenation{War-sza-wa}


\usepackage{t1enc}

\newcommand{\newc}{\newcommand}

\let\ccdot.
\def\cdot{\hbox to 2.5pt{\hss$\ccdot$\hss}}

\newc{\aR}{\mbox{\boldmath{$ R$}}}
\newc{\aS}{\mbox{\boldmath{$ S$}}}
\newc{\aT}{\mbox{\boldmath{$ T$}}}
\newc{\aW}{\mbox{\boldmath{$ W$}}}

\newc{\aD}{\mbox{\boldmath{$ D$}}\hspace{-.2mm}}

\newc{\aK}{\mbox{\boldmath{$ K$}}}
\newc{\aL}{\mbox{\boldmath{$ L$}}}


\usepackage{amssymb}
\usepackage{amscd}

\newcommand{\Sc}{\it Sc}








\newcommand{\nn}[1]{(\ref{#1})}








\newcommand{\J}{{\mbox{\it J}}}

\newc{\obstrn}[2]{B^{#1}_{#2}}



\newcommand{\rpl}                         
{\mbox{$
\begin{picture}(12.7,8)(-.5,-1)
\put(0,0.2){$+$}
\put(4.2,2.8){\oval(8,8)[r]}
\end{picture}$}}

\newcommand{\lpl}                         
{\mbox{$
\begin{picture}(12.7,8)(-.5,-1)
\put(2,0.2){$+$}
\put(6.2,2.8){\oval(8,8)[l]}
\end{picture}$}}

\usepackage{ifthen}

\newc{\tensor}[1]{#1}
\newc{\Mvariable}[1]{\mbox{#1}}
\newc{\down}[1]{{}_{#1}}
\newc{\up}[1]{{}^{#1}}


%
%
\newc{\JulyStrut}{\rule{0mm}{6mm}}
\newc{\midtenPan}{\mbox{\sf S}}
\newc{\midten}{\mbox{\sf T}}
\newc{\midtenEi}{\mbox{\sf U}}
\newc{\ATen}{\mbox{\sf E}}
\newc{\BTen}{\mbox{\sf F}}
\newc{\CTen}{\mbox{\sf G}}




%
%
%
%

\def\sideremark#1{\ifvmode\leavevmode\fi\vadjust{\vbox to0pt{\vss
 \hbox to 0pt{\hskip\hsize\hskip1em
 \vbox{\hsize2cm\tiny\raggedright\pretolerance10000
  \noindent #1\hfill}\hss}\vbox to8pt{\vfil}\vss}}}

\numberwithin{equation}{section}



\newcommand{\hh}{{\hspace{.3mm}}}

\newcommand{\cc}{\boldsymbol{c}}








\DeclareMathOperator{\divergence}{div}



\newcommand{\sss}{\scriptscriptstyle}


\newcommand{\SSmidge}{{\hspace{-.2mm}}}

\renewcommand\geq{\geqslant}
\renewcommand\leq{\leqslant}

\DeclareMathOperator{\ext}{d}
\DeclareMathOperator{\D}{L}

\DeclareMathOperator{\I2}{S}

\DeclareMathOperator{\Vol}{Vol}

\newcommand{\mh}{{}\hspace{-.3mm}}

\begin{document}

\renewcommand{\today}{}
\title{
{
Renormalized volumes with boundary
%
}}
\author{ A. Rod Gover${}^\sharp$ \& Andrew Waldron${}^\natural$}

\address{${}^\sharp$Department of Mathematics\\
  The University of Auckland\\
  Private Bag 92019\\
  Auckland 1142\\
  New Zealand,  and\\
  Mathematical Sciences Institute, Australian National University, ACT
  0200, Australia} \email{gover@math.auckland.ac.nz}
  
  \address{${}^{\natural}$Department of Mathematics and Center for Quantum Mathematics and Physics\\
  University of California\\
  Davis, CA95616, USA} \email{wally@math.ucdavis.edu}

\vspace{10pt}

\renewcommand{\arraystretch}{1}

\begin{abstract}
We develop a general regulated volume expansion for 
the volume of a manifold with boundary whose measure is suitably singular along a separating hypersurface.
The expansion is shown to have a regulator independent anomaly  term and a renormalized volume term given by
the  primitive of an associated anomaly operator. These results apply to a wide range
of structures. We detail applications  in the setting of
measures derived from a conformally singular metric. In particular, we show that  the anomaly generates invariant  ($Q$-curvature, transgression)-type pairs
for hypersurfaces with boundary. For the special case of  anomalies coming from the volume enclosed by a minimal hypersurface ending on the boundary of a  Poincar\'e--Einstein
structure, this result recovers Branson's $Q$ curvature and corresponding transgression.
When the singular metric solves a boundary version of the constant scalar curvature Yamabe problem, the anomaly gives generalized Willmore energy functionals  for hypersurfaces with boundary.
Our approach yields computational algorithms for all the above quantities, and we give explicit results for surfaces embedded in 3-manifolds.

\vspace{10cm}

\noindent
{\sf \tiny Keywords: 
 AdS/CFT, complexity, anomaly,  renormalized volume, manifolds with boundary, hypersurfaces, 
  conformal geometry,   conformally compact, Yamabe problem, Willmore energy}

\end{abstract}


\maketitle

\pagestyle{myheadings} \markboth{Gover \& Waldron}{Renormalized Volume with Boundary}

\newpage

\tableofcontents

\section{Introduction}

Many striking results have transpired from the study of infinite
volume manifolds that admit a suitable notion of volume
renormalization. In particular a rich program has emerged by studying
key terms in regulated volume expansions for divergent
volumes. This theory is especially well developed for Poincar\'e--Einstein manifolds. These are
Riemannian manifolds that are (at least asymptotically)
negative Einstein and conformally
compact. The motivation initially physics driven. Following a
dictionary outlined by 
Witten~\cite{Witten}, Gubser, Klebanov, and Polyakov~\cite{Gubser}
for the anti de Sitter/conformal field theory (AdS/CFT) correpondence of~\cite{Mal},
 the  AdS/CFT volume renormalization prescription  
 in this setting was first 
 carried out  by Henningson
and Skenderis in~\cite{Henningson}. For Poincar\'e
Einstein 4-manifolds, Anderson linked the  renormalized
volume to the Chern-Gauss-Bonnet formula~\cite{Anderson}, a theme later studied and
extended to other dimensions in~\cite{Chang}. In~\cite{FGQ}, Fefferman
and Graham proved that for Poincar\'e--Einstein structures  a certain so-called anomaly
term in the volume expansion is a boundary integral over the Branson~$Q$-curvature~\cite{BO,BQ}.

Prior to Anderson's work, Graham and Witten extended the renormalized volume
program to include analogous problems for the divergent ``area'' of
minimal submanifolds that are embedded in conformally compact
geometries and have boundary at a conformal infinity
\cite{GrahamWitten,Gra00}; in the case of minimal 3-manifolds they showed that the
anomaly term gives the Willmore energy functional. A detailed study of
the case of minimal surfaces in Poincar\'e--Einstein 3-manifolds by Alexakis and
Mazzeo showed how the renormalized area is
related to the Willmore functional (now computed for an  embedded minimal
surface).  As in the case of 4-manifolds \cite{Anderson}, and the
advances in~\cite{Chang}, this study involved boundary curvatures
 that arise from the transgression terms in the
Chern--Gau\ss--Bonnet theory. In~\cite{Chang} such terms are in turn
linked the so-called $T$-curvature discovered in \cite{Chang-Qing}.

More recently it was realized that  working with general conformally
compact manifolds leads naturally to an extension to higher
dimensional analogs of the Willmore equation~\cite{CRMouncementCRM,GW15,GW161}, and via volume expansions, energy
functionals for these~\cite{Grahamnew,GWvol}
(see also \cite{GW161,GGHW15} for an alternative approach). Here we show that 
there is tremendous gain in treating a new but related problem, namely
solid infinite volume regions embedded   in manifolds
of the same dimension.  
Asides from allowing the  study of renormalized 
volumes for general regions, 
in particular this provides a direct and
efficient route to the inclusion of boundary/transgression type terms and the generation of higher Willmore energy functionals with boundary.

\medskip 

Our ultimate focus is on structures linked to
conformal geometry. However many aspects of volume renormalization
rely on far less structure and so apply to many other settings and
geometries ({\it cf. e.g.}, \cite{Sheshadri}). With this in mind we consider initially the simple
 setting of  a connected $d$-manifold $\overline{M_+}$ with boundary
$\partial M_+$ (and interior $M_+$)  equipped with a singular volume measure 
$$
{\mu}^o=\frac{{\mu}}{\si^k}\, ,
$$ where $\mu$ is any volume measure on $\overline{M_+}$, $\si$ is
a non-negative defining function for $\partial M_+$
and $k\geq 1$ is a suitable real number.
  Note that
$\mu^o$ is unchanged if we replace the pair $(\mu, \si)$
with $(\Omega^k\mu,\Omega\si)$, where $\Omega$ is any positive
function.
Thus the data $\mu^o$ is equivalent to a density
$\boldsymbol{\si}=[\mu\, ;\, \sigma]$ which is a section of the positive $k^{\rm th}$ root of the
bundle of volume densities. 

The manifold $(M_+,{\mu^o})$ has infinite volume   but via regularization one may consider
the existence of some renormalized volume. In fact we wish to treat
the more general question of the volume of a connected, dimension $d$,
closed submanifold $\overline{D_+}$ (with interior~$D_+$) of~$\overline{M_+}$ that
intersects~$\partial M_+$ along a connected hypersurface $\Sigma= \partial M_+\cap \overline{D_+}$ that forms a closed smoothly bounded 
submanifold of $\partial M_+$. We assume $\partial D_+$ is a smooth
properly embedded
submanifold of $M_+$.
We depict this in the first picture below:
\begin{equation}\label{pictures}
\includegraphics[scale=.4]{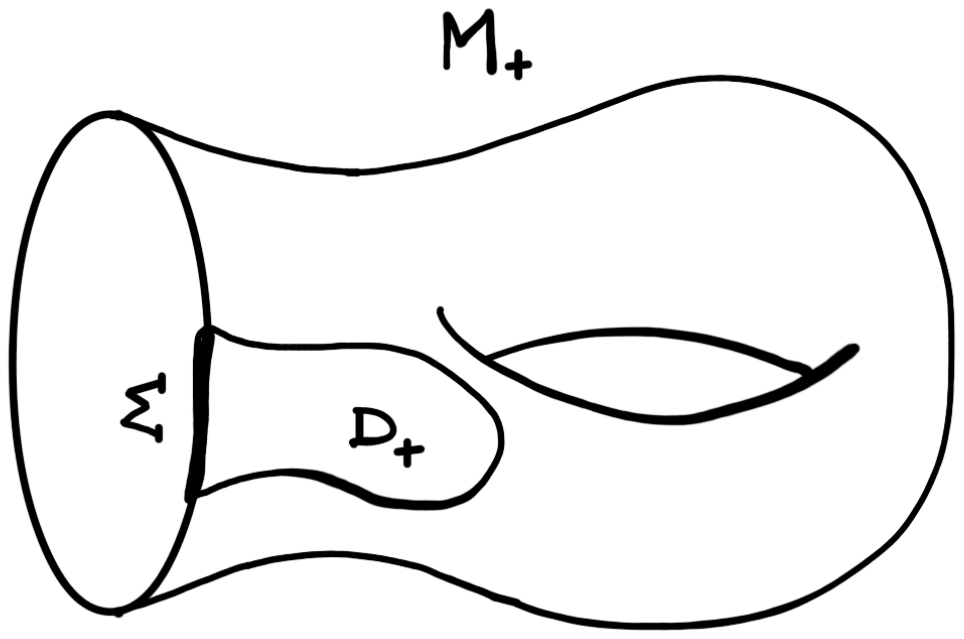} 
\includegraphics[scale=.4]{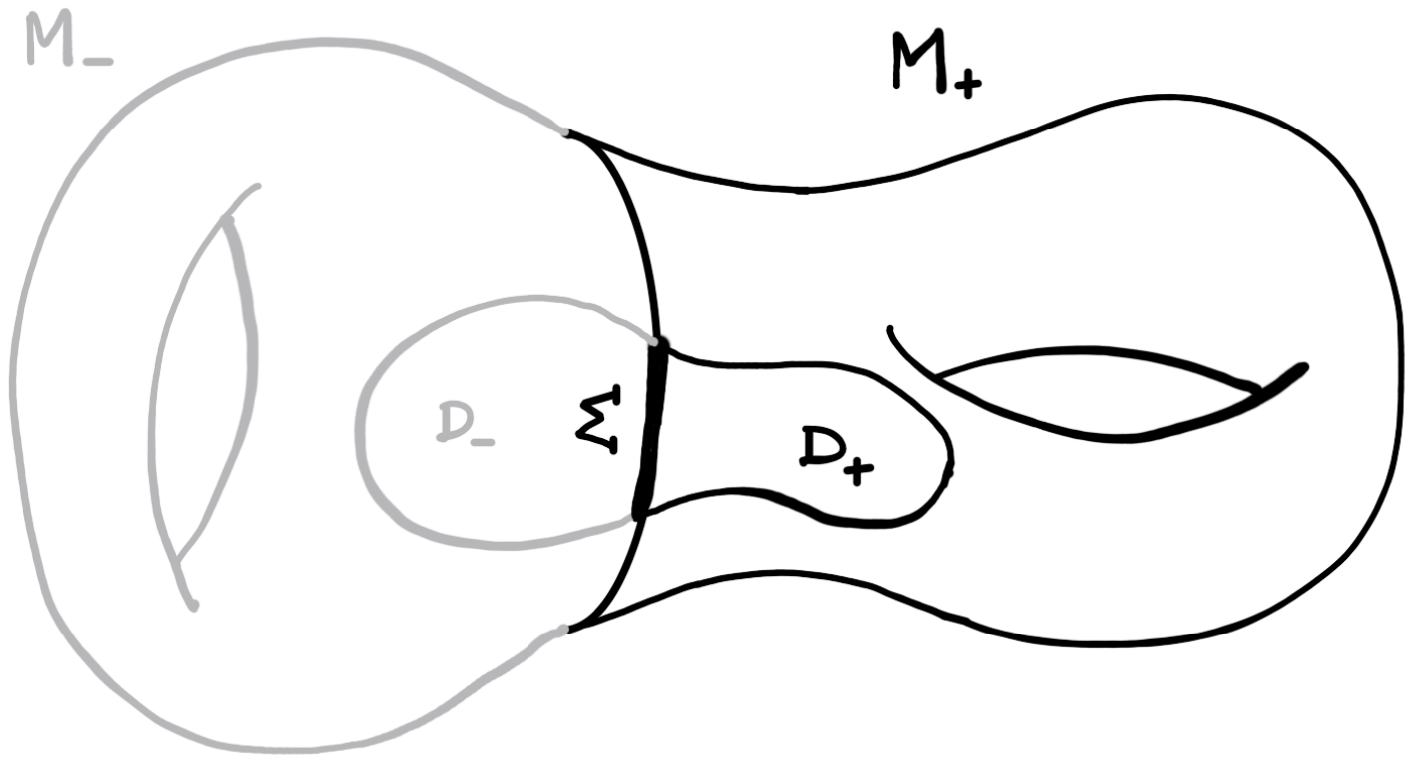}
\end{equation}
With the restriction of $\mu^o$ to ${D_+}$, the structure
$(D_+,\mu^o)$ has infinite volume.
 We treat the 
asymptotics of the regulated volume of $D_\varepsilon:=D_+\cap \{ \si > \varepsilon\}$
in Theorem \ref{expansion} below, and show that if $k\in
\mathbb{Z}_{\geq 1}$ then, as coefficient of $\log \varepsilon$, there is
an {\em anomaly} term $\mathcal{A}$, namely a term that is independent
of the choice of $\si$ representing $\boldsymbol{\si}$. Thus
$\mathcal{A}$ is a property of~$(M,{\mu^o})$ and, in particular, is independent of how the volume computation was regulated.  Via
distributions that will be explained below, the theorem also gives
formulae for the other divergent terms in the expansion. 
The leading term takes the form $A_\Sigma/\big((k-1)\varepsilon^{k-1} \big)
$, where $A_\Sigma$ measures the volume of $\Sigma$ according
to the measure determined by $\mu^o$ and the choice of $\sigma$.
The
renormalized volume is the term in the expansion independent of $\varepsilon$.  Theorem~\ref{transform} shows how this depends on the choice of regulator and also that the latter
is a conformal primitive for an integral anomaly operator, which returns the anomaly when acting on the constant function~$1$.

We next specialize to the conformal setting and this enables a finer
analysis of the structures mentioned. For example we are easily able
to provide an explicit general formula for the two leading divergent
terms in the volume asymototics, see Theorem \ref{divergences}. By
{\em conformal setting} we mean that we assume that $k=d$
and~$\overline{M_+}$ is equipped with an equivalence class $\cc$ of
conformally related Riemannian metrics. So for $g,\widehat{g}\in \cc$
we have that $\widehat{g}=\Omega^2 g$ for some smooth function $\Omega
>0$.  The replacement of $g$ with~$\widehat{g}=\Omega^2 g\in \cc$
determines the replacement of $\mu^g$ with~$\mu^{\widehat{g}}=\Omega^d
\mu^g$, equivalently $\si$ with~$\Omega \si$ (representing~$\boldsymbol{\si}$).

 Each choice of metric $g\in \cc$ induces a metric
$g_\Sigma$ on $\Sigma$, and we show that there is a corresponding pair
of functions $(Q^g,T^g)$, each determined locally 
by the data of $(M,\mu^o)$ and $g$, 
so that
\begin{equation}\label{QT}  
\mathcal{A}=\frac{1}{(d-1)!(d-2)!}\Big[\int_\Sigma Q^g \mu_{g_\Sigma} +\int_{\partial \Sigma} T^g  \mu_{g_{\partial \Sigma}}\Big]\, ,
\end{equation}
see Theorem \ref{Q+T}. The quantities $Q^g$ and $T^g$ arise from a
fixed algorithm used to prove this theorem, which also gives a formula for $Q^g$.
Since the anomaly is determined by~$(M,\mu^o)$, the right-hand-side is
conformally invariant; the same answer $\mathcal{A}$ is obtained if we
compute with $\widehat g:=\Omega^2 g$ and thus $(Q^{\widehat
  g},T^{\widehat g}\hh)$.  So the $T^g$ may be thought of as a  natural {\em
  transgression} for the quantity~$Q^g$ (neither of which is
separately conformally invariant). 

With a suitable restriction ensuring the construction leading to~\nn{QT} depends only on data intrinsic to $\widetilde\Sigma$, we
obtain that $Q^g$ is the usual Branson $Q$-curvature  of $(\Sigma,g_\Sigma)$, and the
corresponding transgression $T^g$  is also determined by this data. In addition  Corollary~\ref{MIN}, which treats this,  shows that for a second canonical restriction sensitive also to the conformal embedding $\widetilde \Sigma\hookrightarrow \overline {M_+}$, the quantity $T^g$  provides a
transgression for the extrinsically coupled $Q$-curvature; the latter
generalizes the $Q$-curvature to a curvature for embedded
hypersurfaces \cite{GWvol}. In view of these cases we shall refer to
any $(Q^g,T^g)$ as a $(Q,T)$-curvature pair.  For
closed hypersurfaces this extrinsically coupled $Q$-curvature provides
a Lagrangian density for higher order Willmore type energies
\cite{Grahamnew,GWvol}.  Thus via $(Q^g,T^g)$ pairs,
where $Q^g$ is the extrinsically coupled $Q$-curvature, the
right-hand-side of \nn{QT} yields a conformally
invariant higher Willmore energy functional for hypersurfaces with
boundary. There is also a nice interpretation of the anomaly
associated with these $Q$-curvature results: it is the anomaly for the
volume expansion of the region enclosed by $\Sigma$ and a minimal
hypersurface in $M$ spanning $\partial \Sigma$. Indeed it is this
setup that we use to obtain the results just mentioned. We are informed
that such geometries have played a role in recent holographic studies of quantum complexity~\cite{complexity1,complexity2}. 

It seems likely that, at least for the case of Branson $Q$-curvature,
the transgression curvature $T^g$ is closely related to the
$T$-curvature for dimension 3 hypersurfaces found  by Chang-Qing~\cite{Chang-Qing}. See also~\cite{Ndi08,Ndi09,Ndi11}
 where this was applied for problems with
$Q$-curvature prescribed on four-manifolds with boundary, and~\cite{GPnew} for 
$T$-like curvatures in other dimensions and orders.

The general theory is illustrated and examined in detail for the case
of surfaces in Sections~\ref{Anomaly} and~\ref{Yamabe}; meaning that we there treat $\Sigma
$ of dimension 2. In particular Theorem~\ref{anomaly} provides, in this
case, general formul\ae\ for $Q^g$ and $T^g$, while Theorem~\ref{singanomaly} specializes this general result to the case depending on embedding $\widetilde \Sigma\hookrightarrow \overline{M_+}$. This gives a Willmore energy expressed
in terms of the Euler
characteristic $\chi_\Sigma$ and explicit
boundary terms generated by an
extrinsic $Q$-curvature/transgression pair.   These results, and the direct
computation algorithms provided by our approach,  are
illustrated by the treatment of several explicit examples, see Sections~\ref{Anomaly} and~\ref{Yamabe}.

%
%

\addtocontents{toc}{\SkipTocEntry}
\section*{Acknowledgements}

We would like to thank Robin Graham, Mukund Rangamani and Erik Tonni for discussions.  Both authors gratefully acknowledge support from the Royal Society of New Zealand via Marsden Grant 13-UOA-018 and A.W. was  supported in part by a Simons Foundation Collaboration Grant for Mathematicians ID 317562.

\section{Results}

Let $M$ be a smooth and (for simplicity) oriented $d$-manifold. Then $M$ is naturally equipped with an equivalence class of measures
 $$\bm \mu = [\mu]=[\Omega^k \mu]\, ,$$
 where $\Omega$ is any smooth, strictly positive function and $k\in {\mathbb R}\backslash\{0\}$. Throughout the paper, we assume that all structures are smooth.
 In the following, the term {\it hypersurface} refers to a smoothly embedded submanifold of codimension~1. 
 
\begin{definition}
Given  an embedded hypersurface $\widetilde \Sigma \hookrightarrow M$, a {\it $k$-defining density} $\bm \sigma$ is an equivalence class of smooth functions
$$
\bm \sigma=[\mu\, ; \sigma]=[\Omega^k\mu \, ;\, \Omega \sigma]
$$
whose zero locus ${\mathcal Z}(\sigma)=\widetilde \Sigma$, with the property that the exterior derivative
$\ext \! \sigma$ satisfies $\ext \! \sigma(P)\neq 0$ $\forall P\in \widetilde \Sigma$ and 
hence $\forall P$
in a neighborhood of $\widetilde \Sigma$.
\end{definition}

\noindent
From now on we suppose that $M$ is equipped with a separating hypersurface~$\widetilde \Sigma$ and we fix $k\in {\mathbb R}\backslash\{0\}$. This is equivalent to a $k$-defining density~$\bm \sigma$.
  The data $(M,\bm \sigma)$ canonically defines a measure
 $$
 \mu^o=\frac{\mu}{\sigma^k}\, ,
 $$
 on $M_+:=\{P\in M| \, \sigma(P)>0\}$.
 We call $\mu^o$ the {\it singular measure} since it diverges along~$\widetilde \Sigma$.

 In the following, we use the term {\it region} 
 for a connected open subset $D\subset M$ with the properties
 (i) $\int_D \mu<\infty$, $ \mu\in \bm \mu$  and $\partial D$ is a closed hypersurface. 
 In particular we wish to study a region $D$ 
that intersects $\widetilde \Sigma$ along some hypersurface $\Sigma\subset \widetilde \Sigma$
that separates $D$ into two regions according to the sign of $\sigma$, as in the second picture of Display~\nn{pictures}, or locally as illustrated:

\vspace{-1cm}
\begin{center}
$\qquad\qquad$\includegraphics[scale=.40]{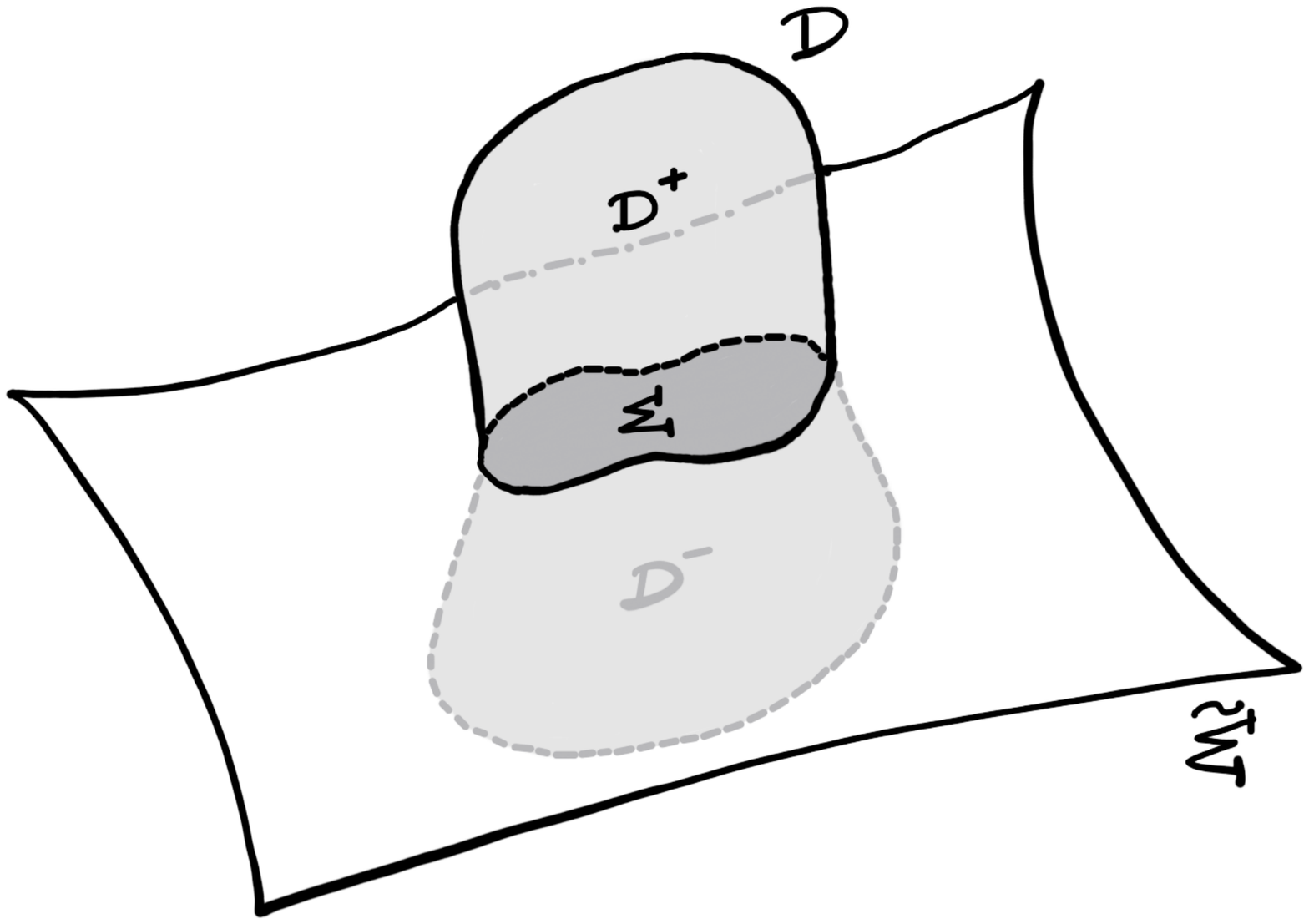}
\end{center}
\vspace{-1cm}

Then the {\it regulated volume}~$\Vol_\varepsilon=\Vol_\varepsilon(D_+;\bm \sigma,\sigma)$ of the region~${D_+}$ is defined by cutting off the integration region at a regulating hypersurface, determined by a choice of $ \sigma\in\bm \sigma$ and $\varepsilon>0$, and inserting a Heaviside step function in the volume integral
\begin{equation}\label{regvoldef}
\Vol_{\varepsilon}:=\int_{ D}{\mu^{o}} \, 
\theta\big({\sigma}-\varepsilon\big)\, ,
\end{equation}
where $\tau$ is any smooth positive function.
The zero locus ${\mathcal Z}(\sigma-\varepsilon)$ defines a one parameter family of {\it regulating hypersurfaces} $\widetilde\Sigma_\varepsilon$ such that $\widetilde\Sigma_0=\widetilde\Sigma$.

\addtocontents{toc}{\SkipTocEntry}\subsection*{The volume expansion}
 
Regulated volume expansions in various specialized geometric  settings (see for example~\cite{Henningson,GrahamWitten,Gra00}) 
are already known to  take the form of a sum of a Laurent series in~$\varepsilon$, whose poles are termed {\it divergences} plus a~$\log\varepsilon$  term whose coefficient is called the {\it anomaly}. Our first theorem establishes this behavior for regulated volumes of general $k$-defining densities:

\begin{theorem}\label{expansion}
Let $\sigma\in \bm \sigma$ where $\bm \sigma$ is a $k$-defining density for a hypersurface $\widetilde \Sigma$. Moreover, let $D$ be a region such that 
\begin{enumerate}[(i)]
\item $D=D_+\cup \Sigma \cup D_-$ where $\Sigma:=D\cap \widetilde \Sigma\neq \emptyset$, 
\\[-3mm]
\item $D_\pm$ are regions such that $\sigma > 0$ on $D_+$ and $\sigma<0$ on $D_-$, and
\\[-3mm]
\item there is  a neighborhood of $\Sigma$ in $D$ with solid cylinder topology.\\[-4mm]
\end{enumerate}
Then, given $\varepsilon \in {\mathbb R}_{>0}$
and
$k\in {\mathbb Z}_{\geq 1}$,
the regulated volume
\begin{equation*}
\Vol_\varepsilon(D_+;\bm\sigma,\sigma)=
\sum_{\ell\in \{k-1,\ldots,1\}}
\frac{(-1)^{k-\ell-1} \, V_{\ell}}{(k-\ell-1)!\, \ell}\, \frac{1}{\varepsilon^\ell}\, 
 \, +\, 
{\mathcal A}\, \log\varepsilon
\, + \, 
\Vol_{\rm ren}
\, +\, \varepsilon\, {\mathcal R}(\varepsilon) \, ,
\end{equation*}
with
\begin{eqnarray*}
V_{\ell}&=&\int_{D} \mu\, 
\delta^{(k-\ell-1)}(\sigma)
\ = \ \int_\Sigma v_{\ell}(\sigma)\, 
\, ,\\[3mm]
{\mathcal A}&=&\frac{(-1)^{k-1}}{(k-1)!}\, \int_{D} \mu \, 
\delta^{(k-1)}(\sigma)\ = \ \int_\Sigma a(\bm\sigma)\, .
\end{eqnarray*}
Here $\bm \sigma = [\mu\, ;\, \sigma]$ and $\delta^{(n)}$ denotes $n$ derivatives of the Dirac delta distribution. Moreover,~$v_\ell$ and $a$ are functions multiplied by  a measure
on $\Sigma$, both of which are  determined locally (meaning finitely many jets) by the data indicated,
$\Vol_{ren}$ is independent of $\varepsilon$ and the function ${\mathcal R}(\varepsilon)$ depends smoothly on~$\varepsilon$.
When $k=1$, the divergences given by the sum over $\ell$ are absent, while when $k>1$ but not integer the $\log \varepsilon$ anomaly term is absent and the sum over $\ell$ runs over $\{k-1,\ldots,k-\lfloor k\rfloor\}$. 
\end{theorem}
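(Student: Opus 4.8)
The plan is to localize the entire singular behaviour to a collar of $\Sigma$ and reduce the $\varepsilon$-asymptotics to a one–dimensional radial integral. First I would use that $\ext\sigma\neq 0$ on $\widetilde\Sigma$ together with hypothesis (iii): the solid–cylinder neighbourhood of $\Sigma$ in $D$ supports coordinates $(\sigma,y)$, with $y$ ranging over the compact manifold-with-boundary $\Sigma$ (boundary $\partial\Sigma$) and $\sigma$ transverse, valid on a slab $\{|\sigma|<\sigma_0\}$; it is exactly (iii) that makes these coordinates extend up to the corner $\partial\Sigma$ so that the fibre integral is over a fixed $\Sigma$. There I write $\mu=J(\sigma,y)\,\ext\sigma\,\mu_\Sigma$ with $\mu_\Sigma$ the induced measure on the slices $\{\sigma=\mathrm{const}\}$ and $J>0$ smooth. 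Since $\theta(\sigma-\varepsilon)$ kills the $D_-$ part and $\mu^o=\mu/\sigma^k$ is a smooth finite measure on $D_+\cap\{\sigma\geq\sigma_0\}$, I split $\Vol_\varepsilon$ into this exterior piece — finite and, for $\varepsilon<\sigma_0$, independent of $\varepsilon$, hence absorbed into $\Vol_{\rm ren}$ — and the collar piece $\int_\Sigma \mu_\Sigma\int_\varepsilon^{\sigma_0}\sigma^{-k}J(\sigma,y)\,\ext\sigma$.

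\textbf{The radial integral.} The core step is then to expand $J(\sigma,y)=\sum_{n=0}^{k}\tfrac{1}{n!}(\partial_\sigma^n J)(0,y)\,\sigma^n+\sigma^{k+1}\widetilde J(\sigma,y)$ (Hadamard, $\widetilde J$ smooth) and integrate termwise against $\sigma^{n-k}$. The terms with $n<k-1$ produce the poles: the $\varepsilon$-dependent part of $\int_\varepsilon^{\sigma_0}\sigma^{n-k}\ext\sigma$ is $\varepsilon^{-(k-1-n)}/(k-1-n)$, which on setting $\ell=k-1-n$ and integrating over $\Sigma$ gives exactly the stated coefficient of $\varepsilon^{-\ell}$. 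The single resonant term $n=k-1$ gives $\int_\varepsilon^{\sigma_0}\sigma^{-1}\ext\sigma=\log\sigma_0-\log\varepsilon$ and hence the $\log\varepsilon$ coefficient. The terms $n\geq k$ together with the remainder $\int_\varepsilon^{\sigma_0}\sigma\,\widetilde J\,\ext\sigma$ give an $\varepsilon$-independent constant (into $\Vol_{\rm ren}$) plus $\int_0^\varepsilon$ of a function smooth in $\sigma$, i.e. $\varepsilon$ times a smooth function of $\varepsilon$, furnishing $\varepsilon\,\mathcal R(\varepsilon)$. A consistency point I would record is that all $\sigma_0$-dependence of the finite terms cancels against the exterior piece, so $\Vol_{\rm ren}$ is well defined independently of the artificial collar radius.

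\textbf{Invariant repackaging and the degenerate cases.} Having produced the coefficients as fibre integrals of transverse jets of $J$ at $\sigma=0$, I recast them in the distributional form of the statement via $\int g(\sigma)\,\delta^{(m)}(\sigma)\,\ext\sigma=(-1)^m g^{(m)}(0)$: the fibre integral of $(\partial_\sigma^{\,k-\ell-1}J)(0,y)$ is $\int_D\mu\,\delta^{(k-\ell-1)}(\sigma)$ up to sign, and likewise for $\mathcal A$, which simultaneously exhibits locality (finitely many transverse jets of $\mu$ and $\sigma$ along $\Sigma$) and identifies $v_\ell$ and $a$ as densities on $\Sigma$. The degenerate cases drop out of the same expansion: for $k=1$ no $n<k-1$ term exists, so the poles are absent and only the resonant $\log$ survives; for non-integer $k>1$ the exponent $n-k+1$ never vanishes at integer $n$, so there is no resonance and no $\log$, while the divergent powers are exactly $\{k-1,\dots,k-\lfloor k\rfloor\}$.

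\textbf{Main obstacle.} The step I expect to be hardest is showing that the $\log\varepsilon$ coefficient is genuinely the anomaly, i.e. that $\int_D\mu\,\delta^{(k-1)}(\sigma)$ depends only on $\bm\sigma=[\mu;\sigma]$, so that $a=a(\bm\sigma)$. Unlike the $v_\ell$, which legitimately depend on the chosen $\sigma$, the residue must be invariant under $(\mu,\sigma)\mapsto(\Omega^k\mu,\Omega\sigma)$. The delicacy is that $\delta^{(k-1)}$ transforms under the nonlinear reparametrization $\sigma\mapsto\Omega\sigma$ through all intermediate derivatives of $\Omega$, so invariance is not termwise. I would establish it by changing variables to $u=\Omega\sigma$ in the radial integral and expanding the weight $\Omega^k$ and the Jacobian jointly, checking that every $\Omega$-derivative contribution cancels; equivalently, by noting that $\mu^o=\mu/\sigma^k$ is manifestly invariant and that the $\log\varepsilon$ term is the unique scale-invariant ($\sigma\to c\sigma$) piece of the radial integrand — the residue of the order-one pole, which carries conformal weight zero. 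This residue computation is where the real work lies, and it is what upgrades the bare expansion into the assertion that $\mathcal A$ is a regulator-independent invariant of $(M,\mu^o)$.
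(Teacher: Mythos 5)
Your argument is correct and is essentially the paper's own proof in slightly different clothing: the paper sets $I(\varepsilon)=-\varepsilon^{k}\,d\Vol_\varepsilon/d\varepsilon=\int_D\mu\,\delta(\sigma-\varepsilon)$, Taylor-expands $I$ about $\varepsilon=0$ and integrates back in $\varepsilon$, which is term-for-term the same computation as your collar decomposition with the Hadamard expansion of the Jacobian $J$ integrated radially (the solid-cylinder hypothesis playing the identical role of letting $\sigma$ serve as a transverse coordinate). The regulator-independence of the anomaly, which you rightly single out as the step needing real work, is disposed of in the paper by the distributional identity $\Omega^{k}\mu\,\delta^{(k-1)}(\Omega\sigma)=\mu\,\delta^{(k-1)}(\sigma)$ for positive functions $\Omega$ --- precisely the cancellation your change of variables $u=\Omega\sigma$ would verify; keep that version rather than your ``equivalently'' scale-invariance heuristic, which as stated only covers constant $\Omega$.
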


\noindent 
The proof of this theorem is given in Section~\ref{volexps}.

\addtocontents{toc}{\SkipTocEntry}\subsection*{Densities}
The analysis of the volume expansion is simplified using densities:  Here a {\it density of homogeneity~$s$} refers to 
 a double equivalence class  $[\mu\, ;\, \gamma]=[\Omega^k \mu\, ;\, \Omega^s \gamma]$ for~$\gamma$ a section of some vector bundle over $M$ and $s\in {\mathbb R}$. A scalar-valued density $\bm \tau$ of homogeneity~$s=1$ is called a {\it true scale} (or simply a scale).

We may employ a true scale $\bm \tau$ to record the dependence of the regulated volume and related quantities on the choice of $\sigma\in \bm \sigma$. This is done by choosing an equivalence class representative
$\bm \tau=[\mu\, ;\, 1]$ where $\mu$ is the measure determined by the choice of $\sigma$ according to $\bm \sigma=[\mu\, ;\, \sigma]$.
Since the choice $\sigma$ determines a 
 family of regulating surfaces~$\widetilde\Sigma_{\varepsilon}$, we shall call the density $\bm \tau$ a {\it regulator}. We shall therefore employ the notation $\Vol_\varepsilon(D_+;\bm \sigma,\bm \tau)$ for the regulated volume as defined in Equation~\nn{regvoldef}.

 It is also useful to introduce {\it weight~$l$ log densities}~\cite{GW},
 which are defined by an additive equivalence class of functions
 $$
 \bm \lambda=[\mu\, ;\, \lambda]=[\Omega^k\mu\, ;\, 
 \lambda + l \log \Omega]\, .
 $$ 
The logarithm of a true scale $\bm \tau=[\mu\, ;\, \tau]$  defines a weight~$1$ log density $\log \bm \tau :=[\mu\, ;\, \log \tau]$.

The integral of a homogeneity $s=-\dim M$, scalar-valued density $\bm u=[\mu\, ;\, u]=[\Omega^k \mu\, ; \, \Omega^{-k}\, u]$  over a region $D\subset M$  is well-defined and given by
$$
\int_D \bm u := \int_D \mu u\, .
$$
A consequence of the property $\delta(\alpha x)=\tfrac1\alpha \delta(x)$, is that given $\bm \sigma=[\mu\, ; \, \sigma]$ we can form
$$
\bm \delta^{(\ell)}:=\big[\mu\, ;\, 
\delta^{(\ell)}(\sigma)\big]
=
\big[\Omega^k\mu\, ;\, \Omega^{-\ell-1}\, 
\delta^{(\ell)}(\sigma)\big]
\, ,
$$
so $\bm\delta^{(\ell)}$ is a distribution-valued density and $\bm\delta^{(k-1)}$ is a homogeneity~$-\dim M$ density and so can  be integrated over a region $D$. In particular, the result for the regulator independent
anomaly of  Theorem~\ref{expansion}
can be restated as
\begin{equation}\label{A}
{\mathcal A}=
\frac{(-1)^k}{(k-1)!}\, \int_{\,  D}
\, \bm \delta^{(k-1)}\, ,
\end{equation}
which only depends on $(D_+,\bm \sigma)$.

\addtocontents{toc}{\SkipTocEntry}\subsection*{Renormalized volume}
The~$\varepsilon$ independent 
term~$\Vol_{\rm ren}$ in  Theorem~\ref{expansion}  is known as the {\it renormalized volume},
and in general depends on the regulator $\bm \tau$.
Our second theorem 
gives an explicit formula for the regulator dependence of the renormalized volume.

\begin{theorem}\label{transform}
Let $(D,\bm \sigma)$ obey the same conditions as required by Theorem~\ref{expansion}, and 
suppose $\omega$ is
any smooth function. Then the renormalized volume satisfies
$$
\Vol_{\rm ren}(e^{\omega}\bm \tau)-\Vol_{\rm ren}(\bm \tau)=\frac{(-1)^{k}}{(k-1)!}\, \int_D  \omega \, \bm  \delta^{(k-1)}(\sigma)=\int_\Sigma a(\bm \sigma, \omega)\, ,
$$
where $a$ is a locally determined, homogeneity $-\dim\Sigma$, density along $\Sigma$. 
\end{theorem}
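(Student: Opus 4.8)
The plan is to compute the \emph{difference} of the two regulated volumes in one stroke and then read off its $\varepsilon$-independent part, which by definition is $\Vol_{\rm ren}(e^\omega\bm\tau)-\Vol_{\rm ren}(\bm\tau)$. First I would translate the change of regulator into a rescaling of the defining function: tracking the density bookkeeping, the representative $\sigma\in\bm\sigma$ associated to the scale $\bm\tau=[\mu\,;1]$ is replaced, under $\bm\tau\mapsto e^\omega\bm\tau$, by $\hat\sigma=e^{-\omega}\sigma$ with measure $\hat\mu=e^{-k\omega}\mu$, since $[\mu\,;e^\omega]=[e^{-k\omega}\mu\,;1]$; crucially $\mu^o=\mu/\sigma^k=\hat\mu/\hat\sigma^k$ is unchanged. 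Hence
\[
\Vol_\varepsilon(e^\omega\bm\tau)-\Vol_\varepsilon(\bm\tau)=\int_D\mu^o\,\big[\theta(e^{-\omega}\sigma-\varepsilon)-\theta(\sigma-\varepsilon)\big].
\]

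Next I would linearise the bracket along the ray $\sigma_t:=e^{-t\omega}\sigma$, $t\in[0,1]$, using $\theta(e^{-\omega}\sigma-\varepsilon)-\theta(\sigma-\varepsilon)=\int_0^1\partial_t\theta(\sigma_t-\varepsilon)\,dt=-\int_0^1\omega\,\sigma_t\,\delta(\sigma_t-\varepsilon)\,dt$. Writing $\mu_t:=e^{-kt\omega}\mu$ so that $\mu^o=\mu_t/\sigma_t^k$ and $\mu^o\,\sigma_t=\mu_t/\sigma_t^{k-1}$, and replacing $\sigma_t^{1-k}$ by $\varepsilon^{1-k}$ on the support $\{\sigma_t=\varepsilon\}$ of the delta (legitimate since $\varepsilon>0$), this becomes
\[
\Vol_\varepsilon(e^\omega\bm\tau)-\Vol_\varepsilon(\bm\tau)=-\int_0^1 dt\;\varepsilon^{1-k}\,W_t(\varepsilon),\qquad W_t(\varepsilon):=\int_D\mu_t\,\omega\,\delta(\sigma_t-\varepsilon).
\]
By condition (iii) and smoothness, $W_t(\varepsilon)$ is a genuine smooth function of $\varepsilon$ near $0$, with Taylor expansion $W_t(\varepsilon)=\sum_{n\geq0}\tfrac{(-\varepsilon)^n}{n!}\int_D\mu_t\,\omega\,\delta^{(n)}(\sigma_t)$, exactly as in Theorem~\ref{expansion}.

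The $\varepsilon^0$ coefficient of $-\varepsilon^{1-k}W_t(\varepsilon)$ comes solely from $n=k-1$, giving $\tfrac{(-1)^{k}}{(k-1)!}\int_D\mu_t\,\omega\,\delta^{(k-1)}(\sigma_t)$; no $\log\varepsilon$ appears, since the difference is a pure Laurent series in $\varepsilon$, consistent with the regulator-independence of $\mathcal A$ proved in Theorem~\ref{expansion} (the two anomaly terms cancel in the difference). I would then invoke the density property $\delta^{(k-1)}(e^{-t\omega}\sigma)=e^{k t\omega}\delta^{(k-1)}(\sigma)$, a consequence of $\delta(\alpha x)=\alpha^{-1}\delta(x)$, which gives $\mu_t\,\delta^{(k-1)}(\sigma_t)=\mu\,\delta^{(k-1)}(\sigma)$; equivalently $\omega\,\bm\delta^{(k-1)}$ is a homogeneity $-\dim M$ density, so its integral is representative-independent and in particular $t$-independent. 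The $t$-integral is therefore trivial, and
\[
\Vol_{\rm ren}(e^\omega\bm\tau)-\Vol_{\rm ren}(\bm\tau)=\frac{(-1)^{k}}{(k-1)!}\int_D\omega\,\bm\delta^{(k-1)}(\sigma).
\]
Finally, that this equals $\int_\Sigma a(\bm\sigma,\omega)$ for a locally determined density $a$ of homogeneity $-\dim\Sigma$ follows exactly as the identity $\int_D\mu\,\delta^{(k-1)}(\sigma)=\int_\Sigma a(\bm\sigma)$ of Theorem~\ref{expansion}: the $(k-1)$-st derivative of the delta localises to $\Sigma$ and pairs against finitely many transverse jets of $\mu\,\omega$ and $\sigma$, so $a(\bm\sigma,\omega)$ is the same construction weighted by $\omega$, with $a(\bm\sigma,1)=a(\bm\sigma)$ recovering the anomaly.

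The step I expect to be the main obstacle is the justification of extracting the constant term: namely that the $\varepsilon^0$ coefficient of the difference of regulated volumes genuinely computes $\Vol_{\rm ren}(e^\omega\bm\tau)-\Vol_{\rm ren}(\bm\tau)$, and that one may Taylor-expand termwise under the $t$-integral. This rests on (a) the regulator-independence of $\mathcal A$, which guarantees the $\log\varepsilon$ terms cancel so that the difference has an honest $(\text{poles})+\text{const}+o(1)$ expansion whose constant is the difference of renormalized volumes, and (b) the smoothness of $W_t(\varepsilon)$ in both $\varepsilon$ and $t$ near $\varepsilon=0$, furnished by the solid-cylinder topology of condition (iii), which controls the Taylor remainder uniformly. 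These are the analytic inputs already underlying Theorem~\ref{expansion}, so they can be imported directly.
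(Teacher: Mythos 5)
Your proposal is correct and is essentially the paper's own argument in integrated form: the paper differentiates $\Vol_\varepsilon(D_+;\bm\sigma,e^{-\omega t}\bm\tau)$ with respect to $t$, converts the Heaviside function to a delta, rescales via $\delta^{(\ell)}(\alpha x)=\alpha^{-\ell-1}\delta^{(\ell)}(x)$, Taylor-expands the resulting smooth function of $\varepsilon$, extracts the $\varepsilon^{k-1}$ coefficient, and uses $t$-independence to integrate trivially --- exactly the steps you perform after writing the difference as $\int_0^1\partial_t\,dt$. Your signs, the density bookkeeping $[\mu;e^\omega]=[e^{-k\omega}\mu;1]$, and the justification for reading off the constant term all check out.
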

\noindent
The proof is given in Section~\ref{volexps}.
In the above, the product of a function $f$ and a density $\bm \gamma=[\mu\, ;\, \gamma]$ is defined by $f \bm \gamma:=[\mu\, ;\, f\gamma]$. When $\omega$ is a constant function, the result above is proportional to the anomaly. The local function $a$ determines the {\it anomaly operator}~$\int_\Sigma a(\bm \sigma,\raisebox{.8mm}{\scalebox{1.2}{.}}\, )$.

\addtocontents{toc}{\SkipTocEntry}\subsection*{The conformal setting}

Our remaining results concern the conformal setting $\bm \mu = [\mu^g]$ (where $\cc=[g]=[\Omega^2g]$, $k=d$), equipped with a defining density $\bm \sigma=[g\, ;\, \sigma]$. 
Here and in general, 
   a density $\bm \gamma=[\mu^g\, ;\, \gamma]$ is now  denoted by~$[g\, ;\, \gamma]$ and termed a
   {\it conformal density of weight~$w$} when the homogeneity $s=w$ (so that
$[g\, ;\, \gamma]=[\Omega^2 g\, ; \Omega^w \gamma]$). 
Note now, that a choice of true scale~$\bm \tau$ coincides with a choice of $g\in\cc$.
The distinction between the notion of conformal weight and homogeneity is made because  other geometric structures have natural notions of weight that may not coincide with the homogeneity.

The geometry of the region $D$ and hypersurface~$\widetilde \Sigma$ is that described above, but now we are 
 studying  the regulated volume of the region~${D_+}$ with respect to a singular metric
$$
g^o=\frac{g}{\sigma^2}\, .
$$
Since~$g^o=(\Omega^2 g)/(\Omega \sigma)$, this and the corresponding singular volume form~${\mu^{g^o}}$ only depend on~$\bm \sigma$.
Also, the
hypersurface~$\widetilde\Sigma$  given by the zero locus~${\mathcal Z}(\sigma)$ is often called a {\it conformal infinity} of the metric~$g^o$.

The volume expansion result of Theorem~\ref{expansion} in terms of integrals over 
distribution-valued densities adapted to the conformal setting is given below (this result was first given in~\cite[Theorem 3.1]{GWvol})
\begin{multline}
\label{epsilon_expansion}
\Vol_\varepsilon  \ =\ 
\frac1{d-1}\frac1{\varepsilon^{d-1}}\int_{\,  D}
\frac{\bm \delta}{{\bm \tau}^{d-1}}
\ -\ \frac{1}{d-2}\frac1{\varepsilon^{d-2}}
\int_{\,  D}
\frac{\bm \delta'}{{\bm \tau}^{d-2}}
\ +\ 
\frac{1}{2(d-3)}\frac1{\varepsilon^{d-3}}
\int_{\,  D}
\frac{\bm \delta''}{{\bm \tau}^{d-3}}\ +\ 
\cdots
\\[3mm]\qquad 
\ \  \cdots \ +\ 
\frac{\ (-1)^{d-2}\ }{(d-2)!}\frac1{\varepsilon}
\int_{\,  D}
\frac{\bm \delta^{(d-2)}}{{\bm \tau}}
\ + \
\frac{(-1)^d}{(d-1)!}\,\log\varepsilon\  \int_{\,  D}
\, \bm \delta^{(d-1)}
\ +\ \Vol_{\rm ren}\ +\  \varepsilon\,  {\mathcal R}\, (\varepsilon)\, .
\end{multline}
Integrations~$\int_\bullet\,~$ over regions in~$M$
are  defined above; when~$\bullet$ is a hypersurface or embedded submanifold, the integration measure is that of the corresponding conformal class of induced metrics.

\addtocontents{toc}{\SkipTocEntry}\subsection*{The anomaly}

The next theorem expresses the anomaly as a sum of integrals over $\Sigma$ and $\partial \Sigma$.  Although the anomaly does not depend on the regulator, simple formul\ae\  for the respective integrands are obtained by introducing a new true scale~$\bm \tau$, 
on which the final result also does not depend. 
It is thus not necessary that this true scale coincide with the regulator, but it is often simplifying to make this choice.

\begin{theorem}\label{Q+T}
Let $(M,\cc)$ be a conformal manifold.
For any choice of true scale $\bm \tau$, the 
anomaly in the regulated volume $\Vol_\varepsilon(D_+;\bm \sigma,\bm \tau)$ is given by
$$
{\mathcal A}=
\frac{1}{(d-1)!(d-2)!}\,\left[ \ 
\int_\Sigma 
\bm Q^{\scalebox{.7}{$\bm\sigma$}}_{\scalebox{.7}{$\bm\tau$}}
\, +\, 
\int_{\partial \Sigma} \bm T^{\scalebox{.7}{$\bm\sigma$}}_{\scalebox{.7}{$\bm\tau$}}\ \right]\, ,
$$
where
$$
\bm Q^{\scalebox{.7}{$\bm\sigma$}}_{\scalebox{.7}{$\bm\tau$}}=
\frac1{\sqrt{\bm{\mathcal S}}}\, (-\D\circ\I2^{-1})^{d-2}\circ  
(-{\mathcal L})
\,  \log {\bm \tau}\Big|_\Sigma\, ,\qquad
{\mathcal L}=\I2^{-1}\circ \D  -
(\bm \nabla_a \bm {\mathcal S}^{-1}) \, {\bm g}^{ab}
  \triangledown^{\scalebox{.7}{$\bm\sigma$}}_b\, ,
$$
and $\bm T^{\scalebox{.7}{$\bm\sigma$}}_{\scalebox{.7}{$\bm\tau$}}$ has a local formula in terms of $(\bm \sigma,\bm \tau)$.
\end{theorem}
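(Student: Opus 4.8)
The plan is to start from the scale-independent distributional formula for the anomaly supplied by Theorem~\ref{expansion} and Equation~\eqref{A} with $k=d$, namely
$$
\mathcal{A}=\frac{(-1)^d}{(d-1)!}\int_D \bm\delta^{(d-1)}\,,
$$
and to convert this bulk integral into the advertised sum of a $\Sigma$-integral and a $\partial\Sigma$-integral. First I would realize $\bm\delta^{(d-1)}=[\mu\,;\,\delta^{(d-1)}(\sigma)]$ as a pushforward onto $\Sigma$: working in a collar of $\Sigma$ with $\sigma$ as transverse coordinate (using hypothesis~(iii)), integrating $\delta^{(d-1)}(\sigma)$ against the bulk measure produces $(d-1)$ transverse derivatives of the associated density restricted to $\sigma=0$, while the Jacobian relating the transverse coordinate to the induced conformal measure on the level sets supplies the prefactor $1/\sqrt{\bm{\mathcal S}}$. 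Here $\bm{\mathcal S}$ is the squared length of the scale tractor of $\bm\sigma$, which is nonvanishing near $\Sigma$ precisely because $\ext\sigma\neq0$ there.

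The core of the proof is to trade each transverse derivative for one application of the Laplace--Robin operator $\D$, renormalized by the weight operator $\I2$. This rests on a distributional integration-by-parts identity of the schematic form $\int_D\bm u\,\bm\delta^{(\ell)}=\int_D(\D\circ\I2^{-1}\bm u)\,\bm\delta^{(\ell-1)}+(\text{terms on }\partial\Sigma)$, valid because the $\sigma$-weighted tangential part of $\D$ is harmless against $\delta^{(\ell)}$ except at lowest order. Iterating this $d-2$ times reduces the delta-order from $d-1$ to $1$ and builds up the composition $(-\D\circ\I2^{-1})^{d-2}$. The innermost (first) step is distinguished: it acts directly on the weight-one log density $\log\bm\tau$, converting it to an ordinary density and generating the tangential-gradient correction, so that it is governed by the modified operator $\mathcal{L}=\I2^{-1}\circ\D-(\bm\nabla_a\bm{\mathcal S}^{-1})\bm g^{ab}\triangledown^{\bm\sigma}_b$ rather than by $\D\circ\I2^{-1}$ itself. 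The log-scale is the natural operand because, by Theorem~\ref{transform}, the anomaly is the integrated conformal variation carried by $\log\bm\tau$; and since $\bm\delta^{(d-1)}$ is itself scale independent, the resulting $\bm\tau$-dependence of $\int_\Sigma\bm Q^{\bm\sigma}_{\bm\tau}$ must cancel against that of $\int_{\partial\Sigma}\bm T^{\bm\sigma}_{\bm\tau}$.

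Assembling the pieces: after the full reduction the bulk measure has localized onto $\Sigma$ as the induced conformal measure, and multiplying by the conversion factor $1/\sqrt{\bm{\mathcal S}}$ and restricting to $\Sigma$ yields exactly $\bm Q^{\bm\sigma}_{\bm\tau}=\tfrac1{\sqrt{\bm{\mathcal S}}}(-\D\circ\I2^{-1})^{d-2}\circ(-\mathcal L)\log\bm\tau\big|_\Sigma$ as the $\Sigma$-integrand, with the constant $\tfrac1{(d-1)!(d-2)!}$ accumulated from the combinatorial factors in Theorem~\ref{expansion} and in the repeated integrations by parts. The boundary terms generated at each integration-by-parts step are supported on $\partial\Sigma$; by construction of this same algorithm they combine into a single local density $\bm T^{\bm\sigma}_{\bm\tau}$ determined by $(\bm\sigma,\bm\tau)$, which is the asserted transgression.

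The main obstacle is the core reduction of the second paragraph: verifying that the distributional integration by parts closes into exactly the stated operator ordering $(-\D\circ\I2^{-1})^{d-2}\circ(-\mathcal L)$, that the $\sigma$-proportional subleading pieces of $\D$ contribute to $\Sigma$ only through the $\mathcal L$-correction and nowhere else, and that all weights match so that every intermediate expression remains a genuine density of the correct homogeneity. Equally delicate is the bookkeeping of the $\partial\Sigma$ boundary terms: one must show they organize into a single well-defined transgression rather than an uncontrolled tower of lower-order pieces, which is where the compatibility of $\D$, $\I2$ and $\mathcal L$ with restriction to the collar, together with the log-scale reformulation, does the essential work.
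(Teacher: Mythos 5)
Your strategy is the paper's: start from $\mathcal{A}\propto\int_D\bm\delta^{(d-1)}$, insert the log density at the critical first step (the paper's Lemma~\ref{lastderiv}, forced by the fact that $\D\,\bm\delta^{(d-2)}=0$ so the critical-order delta cannot be reached by the operator $\I2^{-1}\circ\D$ alone), then trade the remaining $d-2$ derivatives on the delta for powers of $\D\circ\I2^{-1}$ using Proposition~\ref{delta_deriv} together with the formal self-adjointness of the Laplace--Robin operator (Theorem~\ref{parts}), and finally localize the undifferentiated delta onto $\Sigma$ with the $1/\sqrt{\bm{\mathcal S}}$ factor. That part of your outline is sound, up to a minor miscount (the $d-2$ iterations reduce the delta order to $0$, not $1$, once the distinguished $\mathcal{L}$-step is included).

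The genuine gap is your treatment of the boundary contribution. The terms produced by the divergence theorem are \emph{not} supported on $\partial\Sigma$ as you assert: at each integration by parts they arise as integrals over the full hypersurface $\partial D$ against $\bm\delta^{(d-2)}(\sigma)$ and lower-order differentiated deltas of $\sigma$. To turn these into integrals of local densities over $\partial\Sigma$ one must observe that $\bm\sigma|_{\partial D}$ is a defining density for $\partial\Sigma\hookrightarrow\partial D$, note that $\partial D$ is a closed manifold of dimension $d-1$ for which the order $d-2$ is again \emph{critical}, and re-run the entire delta-reduction one dimension down --- including a second appearance of the log-density trick, now with a weight-zero density inserted under the integral. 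This is precisely the content of the paper's Proposition~\ref{trafoform}, whose proof is closed off only because $\partial(\partial D)=\emptyset$ prevents a further cascade of boundary terms. Without this recursive step the claim that $\bm T^{\scalebox{.7}{$\bm\sigma$}}_{\scalebox{.7}{$\bm\tau$}}$ has a local formula in $(\bm\sigma,\bm\tau)$ --- which is part of the statement being proved --- is not established; "by construction of this same algorithm" does not suffice, because the algorithm as you describe it only applies to the bulk integral over $D$.
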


\noindent
In the above Theorem, 
$\bm \nabla$ and $\triangledown^{\scalebox{1.3}{$.$}}$
are, respectively, the conformal gradient (exterior derivative) and coupled conformal gradient operators defined in Equations~(\ref{conformalgradient},\ref{triangle}), and~$\D$ denotes the {\it Laplace--Robin operator}~\cite{GoSigma} (see also~\cite{GW,GWvol}) determined by the defining density $\bm \sigma=[g\, ;\, \sigma]$. The latter is a  second order differential operator mapping a scalar-valued weight $w$ density~$\bm \varphi$ to a scalar-valued weight $w-1$ density, and a weight~$w$ log density $\bm \lambda$ to a weight~$-1$ conformal density, respectively according to
\begin{eqnarray}\label{LR}
\D\bm\varphi\ =\: \D [g\, ;\, \varphi]&:=&\big[g\, ;\, 
(d+2w-2) (\nabla_n+w \rho) \varphi
-\tfrac 1 d\,  \sigma \, (\Delta^g  + w \J^{\, g} )\varphi 
\big]\, ,\\[2mm]
\label{logact}
\D \bm\lambda\ =\: \D [g\, ;\, \lambda] &:=& [g\, ;\, (d-2)(\nabla_n\lambda +w \rho) - \sigma(\Delta^g \lambda + w \J\,^g )] \, .
\end{eqnarray}
Here $n:=\nabla \sigma$, $\rho:=-\tfrac 1d (\nabla.n + \J^{\,  g} \sigma)$ and $\J^{\, g}$ is related to the scalar curvature of $g$  by $\J^{\, g}=\tfrac{1}{2(d-1)}\, \Sc^g$. We use $\nabla^g$ (or simply $\nabla$ when clear by context)  to denote the Levi-Civita connection of $g$ and $\Delta^g$ is its negative energy Laplacian, and we employ a dot notation for the divergence. In our conventions, the  scalar curvature $\Sc^g$  of $\nabla^g$ is negative for hyperbolic spaces.
\vspace{1mm}

Along the zero locus~${\widetilde\Sigma}$ of $\bm \sigma$, provided $w\neq 1-\tfrac d2$, the 
Laplace--Robin operator gives the conformally invariant, Robin-type, combination
of Neumann and Dirichlet boundary operators of~\cite{cherrier}, while in the interior  it is  an invariant,  scale-coupled extension of the Laplacian.
Also,
\begin{equation}\label{scurvy}
{\bm {\mathcal S}}:=\big [g\, ;\, |n|_g^2+2\rho\sigma\big]
\end{equation}
is a weight~$w=0$ conformal density known as the {\it ${\mathcal S}$-curvature}. Along ${\widetilde\Sigma}$ 
it measures the squared length of the conormal vector $n$, while in the interior it is an invariant, scale-coupled,  extension of the scalar curvature up to a negative constant. The operator~$\I2$ denotes multiplication by $\bm{\mathcal S}$, and by inspection, is clearly invertible in a neighborhood of $\widetilde\Sigma$ (we lose no generality assuming this holds throughout $D$).

In general the above theorem provides a $(Q^g,T^g)$ pair determined by the defining density~$\bm \sigma$ and region $D$. 
It is interesting to ask whether this construction can be used to
give a $(Q^g,T^g)$-pair determined only by 
the conformal embedding $\widetilde\Sigma\hookrightarrow M$ and the choice of $\partial \Sigma\hookrightarrow \widetilde \Sigma$.
\begin{corollary}[of Theorem~\ref{Q+T}]
\label{MIN}
There exists natural formul\ae\ for the pair $(\bm Q^{\scalebox{.7}{$\bm\sigma$}}_{\scalebox{.7}{$\bm\tau$}},
 \bm T^{\scalebox{.7}{$\bm\sigma$}}_{\scalebox{.7}{$\bm\tau$}})$ of Theorem~\ref{Q+T}  determined canonically by the conformal embeddings as follows:
 \begin{enumerate}[(i)]
 \item $\partial\Sigma\hookrightarrow \widetilde\Sigma\hookrightarrow \overline{M_+}$  
 and 
 $\bm Q^{\scalebox{.7}{$\bm\sigma$}}_{\scalebox{.7}{$\bm\tau$}}$
is the extrinsic $Q$-curvature determined by $\widetilde\Sigma\hookrightarrow \overline{M_+}$.
   \\[-2mm]
 \item  $\partial \Sigma\hookrightarrow \widetilde\Sigma$
 and
 $\bm Q^{\scalebox{.7}{$\bm\sigma$}}_{\scalebox{.7}{$\bm\tau$}}$
is the Branson $Q$-curvature of $(\widetilde \Sigma,\cc_{\hh \widetilde \Sigma})$.
  \end{enumerate}
 
\end{corollary}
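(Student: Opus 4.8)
\emph{Proof strategy.} The plan is to establish both statements by specializing the defining density $\bm\sigma$ in Theorem~\ref{Q+T} to a canonical choice adapted to the relevant embedding, and then to show that the operator formula for $\bm Q^{\bm\sigma}_{\bm\tau}$ degenerates to a known holographic formula for a $Q$-curvature. The organizing principle in both cases is the \emph{singular Yamabe condition}: one selects $\bm\sigma$ so that the $\mathcal S$-curvature satisfies $\bm{\mathcal S}=1$ along $\widetilde\Sigma$ to as high an order in $\sigma$ as possible. Such a defining density exists and, to the order relevant here, is determined canonically by the conformal embedding $\widetilde\Sigma\hookrightarrow\overline{M_+}$; this is the solution of the singular Yamabe problem (see \cite{GWvol,GoSigma}). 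The first genuine obstruction to achieving $\bm{\mathcal S}\equiv 1$ sits at order $\sigma^{d}$, which is exactly the critical order and is itself the origin of the anomaly.

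With $\bm{\mathcal S}=1$ to this order, each ingredient of the formula in Theorem~\ref{Q+T} simplifies at once. Multiplication by $\bm{\mathcal S}^{-1}$ becomes the identity; the term $(\bm\nabla_a\bm{\mathcal S}^{-1})\,\bm g^{ab}\,\triangledown^{\bm\sigma}_b$ drops out because $\bm\nabla\bm{\mathcal S}^{-1}=0$, so that $\mathcal L=\D$; and the prefactor $1/\sqrt{\bm{\mathcal S}}$ reduces to $1$. Consequently the formula collapses to
\[
\bm Q^{\bm\sigma}_{\bm\tau}=(-\D)^{\,d-1}\,\log\bm\tau\,\big|_{\Sigma}\,.
\]
A point requiring care, and the first step I would carry out in detail, is to confirm that $\bm{\mathcal S}=1+O(\sigma^{d})$ really suffices for this identity: one must count the powers of $\sigma$ produced and consumed across the $d-1$ applications of the Laplace--Robin operator before restriction to $\sigma=0$, since the interior piece $-\tfrac1d\,\sigma(\Delta^g+w\J^{g})$ of $\D$ repeatedly reintroduces a factor of $\sigma$ and the residual term $\bm\nabla\bm{\mathcal S}^{-1}=O(\sigma^{d-1})$ must be shown not to survive to $\Sigma$.

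For part~(i) I would take $\bm\sigma$ to be the singular Yamabe density of the embedding $\widetilde\Sigma\hookrightarrow\overline{M_+}$, which is canonically attached to the conformal embedding. By the previous paragraph the $Q$-part then reduces to $(-\D)^{d-1}\log\bm\tau|_\Sigma$, and this is precisely the holographic formula defining the extrinsic $Q$-curvature in \cite{GWvol}. The associated transgression $\bm T^{\bm\sigma}_{\bm\tau}$ is read off as the boundary term generated by the same algorithm, and it furnishes the claimed transgression for the extrinsic $Q$-curvature.

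For part~(ii) the aim is a quantity built only from data intrinsic to $\widetilde\Sigma$. I would construct $\bm\sigma$ from the formal Poincar\'e--Einstein filling of $(\widetilde\Sigma,\cc_{\widetilde\Sigma})$, that is, choose $g^o=g/\sigma^2$ to be asymptotically Einstein with conformal infinity $\widetilde\Sigma$. For such $g^o$ the entire normal expansion of the metric is governed by the intrinsic conformal data of the boundary, so the extrinsic contributions that appear in part~(i) are absent and the Laplace--Robin construction reduces to the intrinsic ambient construction; thus $(-\D)^{d-1}\log\bm\tau|_\Sigma$ becomes the holographic formula for Branson's $Q$-curvature of $(\widetilde\Sigma,\cc_{\widetilde\Sigma})$, recovering the Fefferman--Graham picture \cite{GW,FGQ}. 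Equivalently, one may derive (ii) from (i) by showing that the extrinsic $Q$-curvature collapses to the intrinsic Branson $Q$-curvature exactly when the trace-free second fundamental form and the remaining extrinsic terms vanish, as they do at a conformal infinity of a Poincar\'e--Einstein metric. The main obstacle I anticipate is precisely this last reduction: verifying rigorously that every extrinsic term drops and that what survives is Branson's $Q$ on the nose, rather than $Q$ modified by lower-order intrinsic curvature corrections, which together with the order-counting of the second paragraph is where the real work lies.
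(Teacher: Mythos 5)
Your treatment of the $\bm Q$ part is essentially the paper's: specialize $\bm\sigma$ to the singular Yamabe solution (case (i)) or to the Poincar\'e--Einstein asymptotics built from $\cc_{\widetilde\Sigma}$ (case (ii)), note that the respective indeterminacies (order $\bm\sigma^{d}$, resp.\ one order lower) sit above what the anomaly formula $\int_D\bm\delta^{(d-1)}$ can detect, and then invoke the holographic formulae of \cite{GW,GWvol} to identify the result with the extrinsic, respectively Branson, $Q$-curvature. Your order-counting worry about $\bm\nabla\bm{\mathcal S}^{-1}=O(\sigma^{d-1})$ surviving $d-2$ applications of $\D$ is legitimate but resolvable, and is implicitly the ``counting derivatives on the data'' step in the paper.

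The genuine gap is the transgression $\bm T^{\scalebox{.7}{$\bm\sigma$}}_{\scalebox{.7}{$\bm\tau$}}$. In Theorem~\ref{Q+T} the boundary term is produced by integrating by parts over the region $D$, so its formula depends explicitly on $\partial D$ --- through $\bm{\mathcal S}_{\scalebox{.7}{$\partial D$}}$, $\bm{\hat n}_{\scalebox{.7}{$\partial D$}}$ and the intersection angle $\theta$ --- and $\partial D$ is \emph{not} part of the data $\partial\Sigma\hookrightarrow\widetilde\Sigma\hookrightarrow\overline{M_+}$. Saying the transgression ``is read off as the boundary term generated by the same algorithm'' therefore does not yield a pair determined canonically by the conformal embeddings; different choices of the side hypersurface $\partial D$ give different $\bm T$'s. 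The paper closes this by invoking the Graham--Witten result that the minimal surface equation for the singular metric $g^o$ formally determines a defining density $\bm\nu$ for a hypersurface $H$ with $\partial H=\partial\Sigma$ up to order $\bm\sigma^{d-1}$; one then takes $\partial D_+ = H\cup\Sigma$ and checks, again by inspecting Formula~\nn{A}, that the anomaly only sees $\bm\nu$ to the order at which it is determined. Without this (or some equivalent canonical choice of $\partial D$ spanning $\partial\Sigma$, together with the accompanying order check), the claimed naturality of the pair --- as opposed to just of $\bm Q$ --- is not established.
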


\noindent
In (i) above the extrinsic $Q$-curvature is
canonically determined by solving the singular Yamabe condition~\nn{singYam}, see~\cite{GW15,GWvol}.
On the other hand, in (ii) $\cc_{\hh \widetilde \Sigma}$ is 
any conformal structure on $\widetilde \Sigma$. 
In both cases, we may view $\bm \tau$ as any true scale on $\widetilde \Sigma$.
The proofs of Theorem~\ref{Q+T} and its Corollary~\ref{MIN} are given in Section~\ref{QTsect}.

\begin{remark}
For Poincar\'e--Einstein structures, where $\partial \Sigma=\emptyset$, the anomaly was already known to be the integral of the Branson $Q$-curvature~\cite{FGQ}. 
\end{remark}

\addtocontents{toc}{\SkipTocEntry}\subsection*{Leading divergences}

Our next result 
 gives explicit formul\ae\ for the leading order~$1/\varepsilon^{d-1}$ and next to leading (nlo)~$1/\varepsilon^{d-2}$ divergences in  the regulated volume expansion of Equation~\nn{regvoldef}. 
 The contribution to the nlo divergence from the   boundary $\partial \Sigma$, is proportional to the intersection angle between the surfaces $\Sigma$ and $\partial  D$.
This is invariantly conformally defined as follows: Given a defining function $\nu$ for some hypersurface~$\Lambda$, we have  $$\frac{\nabla \nu\, }{|\nabla \nu|_g}\, 
\stackrel{\Lambda}=\, 
\frac{\nabla (\Omega\nu)}{\,\,  \Omega|\nabla (\Omega\nu)|_{\Omega^2 g}}\, ,
$$
where $\stackrel{\Lambda}=$ denotes equality upon restriction to a hypersurface. Hence, $$\bm{\hat n}_\Lambda:=[g\, ;\ \nabla \nu/|\nabla\nu|_g]$$
defines a weight $w=1$ conformal density on the conformal manifold $({\Lambda},\bm { c}_\Lambda)$ where ${\bm c}_\Lambda=[g_\Lambda]$ is the conformal class of metrics containing the metric $g_\Lambda$ induced from $g$. This density is termed the {\it unit conormal}. Thus, the angle $\theta$ between ${\Sigma}$ and $\partial  D$ is defined, as a function $\partial \Sigma \to \big [\!-\tfrac\pi 2,\tfrac\pi 2\big )$, by the relation
$$
\cos \theta = \bm{\hat n}_\Sigma . \bm{\hat n}_{\partial  D}\, . 
$$
The leading and nlo divergences are regulator dependent and given as follows:

\begin{theorem}\label{divergences}
Let~$(M,\cc)$ be a conformal $d$ manifold with $d>2$. Then the leading and nlo divergences in the regulated
volume expansion of Theorem~\ref{expansion}
are given by 
\begin{equation}\label{boundaryleading}
\begin{split}
\Vol_\varepsilon\ &=\ 
\frac{1}{d-1}
\frac
1{\varepsilon^{d-1}}
\int_{\Sigma} \frac{1}{\sqrt{\bm{\mathcal S}}\bm \tau^{d-1}}
\\[2mm]
&\ -
\frac{1}{d-2}
\frac{1}{\varepsilon^{d-2}}
\left(
\frac1{d-2}
\int_{\Sigma}
 \frac1{\sqrt{\bm{\mathcal S}}}\,  \D \Big(\frac{1}{\bm {\mathcal S}  \bm \tau^{d-2} }\Big)
+
\int_{\partial \Sigma} \frac{\cos\theta}{\sqrt{
\bm{  \mathcal S}\hspace{.1mm}
\bm {\mathcal S}_{\scalebox{.7}{$\partial D$}}^{\phantom{.}} \,} \ \bm \tau^{d-2}}
\right)\ + \cdots\, .
\end{split}
\end{equation}
\end{theorem}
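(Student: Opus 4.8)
The plan is to extract the two coefficients directly from the distributional expansion \eqref{epsilon_expansion} and then localize the resulting integrals over $D$ onto $\Sigma$ and $\partial\Sigma$. Reading off \eqref{epsilon_expansion}, the coefficient of $\varepsilon^{-(d-1)}$ is $\tfrac{1}{d-1}\int_D \bm\delta/\bm\tau^{\,d-1}$ and that of $\varepsilon^{-(d-2)}$ is $-\tfrac{1}{d-2}\int_D \bm\delta'/\bm\tau^{\,d-2}$, so the theorem amounts to evaluating these two invariants. Throughout I would compute in the scale $g$ determined by $\bm\tau$ (so that $\bm\tau=[g\,;\,1]$ and every $\bm\tau$ reduces to $1$); since both sides are densities of the correct weight ($-(d-1)$ over $\Sigma$, $-(d-2)$ over $\partial\Sigma$), verifying the identity in this single scale establishes it for all $\bm\tau$. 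The leading term is then immediate from the coarea formula: $\bm\delta=[g\,;\,\delta(\sigma)]$ and $\int_D f\,\delta(\sigma)\,\mu^g=\int_\Sigma (f/|n|)\,\mu_{g_\Sigma}$ with $n=\nabla\sigma$, and because $|n|^2=\bm{\mathcal S}$ along $\Sigma$ this produces $\int_\Sigma 1/(\sqrt{\bm{\mathcal S}}\,\bm\tau^{\,d-1})$, the first displayed term.

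For the next-to-leading term I would treat $\bm\delta'$ as a genuine distributional derivative on the manifold-with-boundary $D$, which is exactly where the boundary contribution is forced to appear. Near $\Sigma$ we may write $\delta'(\sigma)=(n^a/|n|^2)\,\nabla_a\delta(\sigma)$ (legitimate since $|n|\neq0$ there and $\delta$ is supported on $\Sigma$), and apply the divergence theorem on $D$ to obtain $\int_D \delta'(\sigma)\,\mu^g=-\int_D \nabla_a(n^a/|n|^2)\,\delta(\sigma)\,\mu^g+\int_{\partial D}(n^a\hat\nu_a/|n|^2)\,\delta(\sigma)\,\mu_{g_{\partial D}}$, where $\hat\nu$ is the outward unit conormal of $\partial D$. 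The first (bulk) integral localizes to $\Sigma$ by the same coarea step, and I would recognize its integrand as the Laplace--Robin operator in action: using $\rho|_\Sigma=-\tfrac1d\,\nabla\!\cdot n$ from the definition below \eqref{logact} one checks the pointwise identity $-\,|n|^{-1}\nabla_a(n^a/|n|^2)\big|_\Sigma=\tfrac{1}{d-2}\,\tfrac{1}{\sqrt{\bm{\mathcal S}}}\,\D\!\big(1/(\bm{\mathcal S}\,\bm\tau^{\,d-2})\big)\big|_\Sigma$, the constant $1/(d-2)$ arising precisely from the prefactor $(d+2w-2)=-(d-2)$ in \eqref{LR} at the weight $w=-(d-2)$ carried by $1/(\bm{\mathcal S}\,\bm\tau^{\,d-2})$. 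This reproduces the bulk $\Sigma$-integral of the theorem.

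The boundary integral is the crux and the step I expect to demand the most care. It must be pushed down from $\partial D$ to $\partial\Sigma$ by a second coarea formula, now carried out inside the hypersurface $\partial D$ whose restricted function $\sigma|_{\partial D}$ cuts out $\partial\Sigma$. The geometric inputs are that $n^a\hat\nu_a=|n|\cos\theta$, with $\theta$ the intersection angle fixed by the unit conormals $\bm{\hat n}_\Sigma$ and $\bm{\hat n}_{\partial D}$, and that the tangential gradient satisfies $|\nabla_{\partial D}\sigma|=|n|\sin\theta$, which is exactly $\sqrt{\bm{\mathcal S}_{\partial D}}$ for the defining density induced by $\bm\sigma$ on $\partial D$; dividing the integrand $\cos\theta/|n|$ by this coarea Jacobian assembles $\int_{\partial\Sigma}\cos\theta/\sqrt{\bm{\mathcal S}\,\bm{\mathcal S}_{\partial D}}$. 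I would set up an adapted orthonormal frame at $\partial\Sigma$, decompose $n$ into its $\bm{\hat n}_{\partial D}$-component and its $T\partial D$-projection to verify these two relations, and then confirm that the emergent factors of $\sin\theta$ and the two $\mathcal S$-curvatures collapse to precisely the stated density, simultaneously checking that the result is independent of the defining function chosen for $\partial D$ and carries conformal weight $-(d-2)$ as required for integration over the $(d-2)$-manifold $\partial\Sigma$. The remaining work is routine bookkeeping: matching the overall signs against \eqref{epsilon_expansion} (with $V_{d-2}=\int_D \bm\delta'/\bm\tau^{\,d-2}$ assembling as the bulk plus boundary pieces above) and noting that all curvature corrections produced are of the local, $\varepsilon$-independent form guaranteed by Theorem~\ref{expansion}.
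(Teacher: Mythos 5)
Your proposal is correct and reaches the stated formula, but it takes a more elementary, scale-fixed route than the paper. The paper extracts the same two coefficients from \nn{epsilon_expansion} and then localizes them using its pre-built invariant machinery: the leading term via the delta-integration identity \nn{deltaintegral}, and the nlo term via Lemma~\ref{firstboundarydivergence}, which first invokes Proposition~\ref{delta_deriv} to write $(d-2)\hh\bm\delta'$ as the Laplace--Robin operator applied to $\bm\delta$ and then moves that operator onto $1/(\bm{\mathcal S}\bm\tau^{d-2})$ using the formal self-adjointness of Theorem~\ref{parts}; the boundary contribution there arises automatically from the divergence of the canonical current $\bm j$, which collapses to $-(d-2)\hh n^a{\mathcal S}^{-1}\delta(\sigma)$ on the support of the delta. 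You instead fix the scale $g$ with $\bm\tau=[g\hh;\hh 1]$, use the coarea formula for the leading term, write $\delta'(\sigma)=(n^a/|n|^2)\nabla_a\delta(\sigma)$, apply the ordinary divergence theorem on $D$, and only afterwards recognize the bulk integrand as $\tfrac1{d-2}\bm{\mathcal S}^{-1/2}\D\big(1/(\bm{\mathcal S}\bm\tau^{d-2})\big)$ — an identity I checked: with $w=2-d$ one has $d+2w-2=-(d-2)$, and using $-d\rho=\nabla\ccdot n$ and $\nabla_n{\mathcal S}^{-1}|_\Sigma=-|n|^{-4}(\nabla_n|n|^2+2\rho|n|^2)$ from \nn{LR}, both sides reduce to $-|n|^{-1}\big((\nabla\ccdot n)|n|^{-2}-|n|^{-4}\nabla_n|n|^2\big)$. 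Your boundary bookkeeping ($n\ccdot\hat\nu=|n|\cos\theta$, $|\nabla_{\partial D}\sigma|^2=|n|^2\sin^2\theta=\bm{\mathcal S}_{\partial D}|_{\partial\Sigma}$, followed by a coarea step inside $\partial D$) reproduces exactly the paper's application of \nn{deltaintegral} on $\partial D$. What the paper's route buys is manifest conformal invariance at every step and reusability of the lemmas for the higher-order terms and the anomaly; what yours buys is self-containedness, at the cost of having to verify by hand the pointwise identity and the invariance of the final densities (which your weight-counting argument does legitimately supply).
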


\medskip

\addtocontents{toc}{\SkipTocEntry}\subsection*{Surfaces}

Our remaining results focus on the anomaly and its associated $(Q,T)$-pair for embedded
surfaces.

\begin{theorem}
\label{anomaly}

Let $(M,\bm c)$ be a conformal $3$-manifold.
Then the anomaly of Theorem~\ref{expansion} (see also Equation~\nn{A})
is given by
\begin{equation}\label{surfaceanomaly}
{\mathcal A}=\frac12\int_{\Sigma}
{\bm Q}^{\scalebox{.7}{$\bm\sigma$}}
+\frac12
\int_{\partial\Sigma}
{\bm T}^{\scalebox{.7}{$\bm\sigma$}}\, ,
\end{equation}
with
\begin{equation*}
\begin{split}
{\bm Q}^{\scalebox{.7}{$\bm\sigma$}}
&=
\frac1{{\sqrt{\bm{\mathcal S}}}} \D\circ\I2^{-1}\circ\, 
{\mathcal L}\, 
 \log \bm \tau\, ,
 \\[3mm]
 {\bm T}^{\scalebox{.7}{$\bm\sigma$}}&= 
\frac{
\cos \theta }{\sqrt{
\bm{  \mathcal S}\hspace{.1mm}
\bm {\mathcal S}_{\scalebox{.7}{$\!\partial D$}}^{\phantom{.}} \,} } \ 
{\mathcal L}
\,   \log \bm \tau
\, + \, 
 \bm{\hat n}_{\partial\Sigma|\partial  D}^a\bm {\nabla}^{\partial D}_a
\Big(\frac{\bm{\hat n}_{\scalebox{.7}{$\partial D$}} \, .\, 
\bm n^{\scalebox{.7}{${\bm\tau}$}}
}{\bm {\mathcal S}\hspace{.1mm}\bm {\mathcal S}_{\partial D}}\Big)\,  ,
\end{split}
\end{equation*}
where
$$
  {\mathcal L}=\I2^{-1}\circ \D  -
(\bm \nabla_a \bm {\mathcal S}^{-1}) \, {\bm g}^{ab}
  \triangledown^{\scalebox{.7}{$\bm\sigma$}}_b\, \mbox{ and }\ 
  \bm n^{\scalebox{.7}{${\bm\tau}$}}
=\bm \tau^{-1} 
 \triangledown^{\scalebox{.6}{$\bm \tau$}}
 \bm \sigma\, .
$$
Here~$\bm {\mathcal S}_{\partial D}$ denotes the~${\mathcal S}$-curvature along~$\partial  D$
of the restriction of~$\bm \sigma$ to~$\partial  D$, and 
~$\bm{\hat n}_{\partial\Sigma|\partial  D}$ is the unit conormal with respect to the embedding of~$\partial\Sigma$ in~$\partial  D$. Also,
$\cos \theta = \bm{\hat n}_\Sigma . \bm{\hat n}_{\partial  D}$ and~$\bm{ \nabla}^{\partial D}$ is the conformal gradient along~$\partial D$.
\end{theorem}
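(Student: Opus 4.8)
The plan is to obtain Theorem~\ref{anomaly} as the explicit $d=3$ incarnation of Theorem~\ref{Q+T}, so that most of the work reduces to making the general machinery completely concrete in the lowest nontrivial case. Setting $d=3$ gives $\frac{1}{(d-1)!(d-2)!}=\frac12$ and $d-2=1$, whence the iterated operator $(-\D\circ\I2^{-1})^{d-2}\circ(-\mathcal L)$ collapses to the single factor $\D\circ\I2^{-1}\circ\mathcal L$, immediately yielding the stated formula for $\bm Q^{\scalebox{.7}{$\bm\sigma$}}$. Thus the genuine content is the explicit boundary contribution $\bm T^{\scalebox{.7}{$\bm\sigma$}}$, which Theorem~\ref{Q+T} only asserts to exist as ``a local formula.'' My strategy is to produce that formula by tracking, step by step, the boundary remainders generated in the integration-by-parts reduction of the bulk anomaly integral to surface integrals.

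I would start from the distributional form of the anomaly, which by Equation~\nn{A} reads $\mathcal A=\frac{(-1)^d}{(d-1)!}\int_D\bm\delta^{(d-1)}=-\frac12\int_D\bm\delta''$ for $d=3$. The engine driving the proof of Theorem~\ref{Q+T} is the formal (anti)self-adjointness of the Laplace--Robin operator $\D$ with respect to integration against the singular measure, together with the relations expressing each $\bm\delta^{(\ell)}$ as the image under $\D$ of $\bm\delta^{(\ell-1)}$ up to the curvature corrections encoded by $\I2$ and $\mathcal L$. Applied to $\bm\delta''$, these relations let me peel the derivatives off the delta, converting $\int_D\bm\delta''$ into an integral of $\I2^{-1}\circ\mathcal L$ against the single distribution $\bm\delta$ supported on $\Sigma$, plus an integration-by-parts remainder. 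After the final application of $\D\circ\I2^{-1}$ and division by $\sqrt{\bm{\mathcal S}}$, the $\Sigma$-supported piece is precisely $\frac12\int_\Sigma\bm Q^{\scalebox{.7}{$\bm\sigma$}}$.

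The remainder of the peeling step localizes on $\partial\Sigma=\Sigma\cap\partial D$, and assembling it into $\bm T^{\scalebox{.7}{$\bm\sigma$}}$ is where the real work lies. Here I must introduce the geometry of the corner where $\Sigma$ meets $\partial D$: the two unit conormals $\bm{\hat n}_\Sigma$ and $\bm{\hat n}_{\partial D}$, the intersection angle via $\cos\theta=\bm{\hat n}_\Sigma.\bm{\hat n}_{\partial D}$, and the $\mathcal S$-curvature $\bm{\mathcal S}_{\partial D}$ of $\bm\sigma$ restricted to $\partial D$. I expect the boundary term to split naturally into a piece proportional to $\cos\theta$ carrying $\mathcal L\log\bm\tau$ (the restriction to $\partial\Sigma$ of the very operator appearing in $\bm Q^{\scalebox{.7}{$\bm\sigma$}}$, weighted by the corner data) and a piece that is a divergence along $\partial\Sigma$ inside $\partial D$, namely $\bm{\hat n}^a_{\partial\Sigma|\partial D}\bm\nabla^{\partial D}_a$ acting on the conormal pairing $\bm{\hat n}_{\scalebox{.7}{$\partial D$}}.\bm n^{\scalebox{.7}{$\bm\tau$}}/(\bm{\mathcal S}\bm{\mathcal S}_{\partial D})$. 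The latter reflects the fact that a transgression is determined only up to exact terms along $\partial\Sigma$, a canonical representative being fixed by the integration-by-parts prescription.

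I expect the main obstacle to be the corner analysis at $\partial\Sigma$: near this codimension-two locus the restriction operations and the Laplace--Robin operator no longer commute cleanly, so the integration by parts must be organized so that no ill-defined products of distributions arise and every boundary contribution is captured. A decisive consistency check, which I would carry out as a secondary step, is that the derived $\bm T^{\scalebox{.7}{$\bm\sigma$}}$ transforms under change of the true scale $\bm\tau$ and of the representative $\sigma\in\bm\sigma$ exactly so as to cancel the corresponding change in $\int_\Sigma\bm Q^{\scalebox{.7}{$\bm\sigma$}}$, leaving the total $\mathcal A$ invariant in accordance with Theorem~\ref{transform}. The explicit $\cos\theta$ weighting and the divergence term are precisely what these variations demand, so verifying the cancellation both validates the formula and pins down its normalization.
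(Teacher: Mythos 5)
Your proposal is correct and follows essentially the paper's own route: reduce $-\tfrac12\int_D\bm\delta''$ by first trading $\bm\delta''$ for $\bm\delta'\,{\mathcal L}\log\bm\tau$ plus a $\partial D$-supported remainder (Lemma~\ref{lastderiv}), then writing $\bm\delta'=\I2^{-1}\D\,\bm\delta$ and integrating the Laplace--Robin operator by parts (Proposition~\ref{delta_deriv} and Theorem~\ref{parts}), which yields the $\int_\Sigma\bm Q^{\scalebox{.7}{$\bm\sigma$}}$ term together with the $\cos\theta\,{\mathcal L}\log\bm\tau$ corner term. The only tool you leave implicit is the two-dimensional delta reduction of Lemma~\ref{tl}, which converts the remainder $\int_{\partial D}\bm\delta'\,\bm{\hat n}_{\scalebox{.7}{$\partial D$}}.\bm n^{\scalebox{.7}{$\bm\tau$}}/\bm{\mathcal S}$ into the stated normal-derivative term on $\partial\Sigma$; it is this step, rather than any transgression ambiguity, that produces the extra factor of $\bm{\mathcal S}_{\partial D}$ in the denominator.
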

\noindent
The proofs of Theorems~\ref{divergences} and~\ref{anomaly} are given, respectively,  in Sections~\ref{Divergences} and~\ref{Anomaly}. In the latter  of those sections, we also given an 
explicit class of examples for which $\Vol_{\varepsilon}$ can be computed exactly and exhibit how these two theorems generate the corresponding divergences and anomaly.

\medskip

We also want to  compute anomalies when 
 the singular metric~$g^o$  obeys the {\it singular Yamabe
condition} on its scalar curvature:
\begin{equation}\label{singYam}
\Sc^{g^o}=-d(d-1)\big(1 + \sigma^d B\big)\, ,\mbox{ where } B\in C^\infty\!M\, .
\end{equation}
This is of particular interest because, as shown in~\cite{ACF}, the conformal embedding ${\widetilde\Sigma}\hookrightarrow M$ determines~$\sigma$ uniquely up to the addition of terms of order  $\sigma^{d+1}$. Furthermore, the singular Yamabe condition~\nn{singYam} can be reformulated as a  unit condition for the ${\mathcal S}
$-curvature~\cite{Goal}:
\begin{equation}\label{one}
\bm{\mathcal S} =1+{\mathcal O}(\bm \sigma^d)\, .
\end{equation}
This may  be viewed as a condition on the defining density $\bm \sigma$, and solutions are termed {\it conformal unit defining 
densities}. 
This allows the 
boundary calculus for conformally compact
  manifolds of~\cite{GW}
  to be applied to the singular Yamabe problem.
Indeed, based on those methods, 
 it was shown in~\cite{CRMouncementCRM,GW15,GW161} that uniqueness properties of the solution to the singular Yamabe condition can be exploited to study the conformal geometry of embedded hypersurfaces.
We shall show that in the singular Yamabe setting, the anomaly for embedded surfaces 
 can be expressed in terms of the Euler characteristic~$\chi_{\Sigma}$ of the boundary
and {\it conformal hypersurface invariants} integrated over~$\Sigma$ and~$\partial\Sigma$.

The notion of a conformal hypersurface invariant is  defined and studied in detail in~\cite{GW15,GW161} (see also~\cite{Stafford,YuriThesis}) and refers to invariants of a hypersurface (or submanifold)~${\Sigma}$ determined by the conformal embedding of~${\Sigma}\hookrightarrow M$, in particular if~$P({\Sigma},g)$ is a hypersurface invariant determined by the embedding of~${\Sigma}\hookrightarrow (M,g)$ and~$P({\Sigma},\Omega^2 g):=\Omega^w P({\Sigma},g)$, we shall denote the corresponding weight~$w$ conformal hypersurface invariant by~$\bm P=[g\, ; \, P({\Sigma},g)]=[\Omega^2 g\, ;\, \Omega^w P({\Sigma}\, ; g)]$.   Important examples are the unit conormal to a hypersurface~$\bm {\hat n}^{{\Sigma}}:=[g\, ; \ext\! \sigma/|\! \ext\! \sigma|_g]$ and the trace-free part~$\IIo_{ab}$ of the second fundamental form of~$\Sigma$ yielding~$\hh \bm \IIo_{ab}^\Sigma:=[g\, ;\, \IIo_{ab}]$; these both have  weight~$w=1$;  we will drop the label~${\Sigma}$ when the underlying hypersurface is clear from context.
Note that we  often use the same abstract index notation for both hypersurface and ambient tensors, a dot to indicate inner products of vectors while an index replaced by a vector denotes either the canonical contraction of a vector and a tensor, or contraction using the metric or {\it conformal metric}~${\bm g}_{ab}:=[g\, ; g_{ab}]$. 

Having established these notions, we can state our result for the anomaly when~$g^o$ obeys the singular Yamabe condition~\nn{singYam}:

\begin{theorem}\label{singanomaly}
Let $(M,\bm c)$ be a conformal $3$-manifold
and $\bm \sigma$ be a conformal unit defining density,
and let~$\bm {\hat q}$ and~$\bm {\hat p}$
respectively denote the unit conormals of the conformal embeddings~$\partial\Sigma \hookrightarrow \partial  D$ and
$\partial\Sigma \hookrightarrow \partial \widetilde \Sigma$. Then,
 assuming~$\theta$ is nowhere vanishing, the anomaly of the regulated volume is given by
$$
{\mathcal A} = \pi \chi_{\Sigma}
-\frac14\int_{\Sigma} \bm\IIo_{ab}^\Sigma\,  \bm \IIo_\Sigma^{ab}+
\int_{\partial\Sigma} \Big(
\frac{\, \bm \IIo^{\partial D}_{\scalebox{.7}{$\bm{\hat q}\bm {\hat q}$}}-\cos\theta\, \bm \IIo^\Sigma_{\scalebox{.7}{$\bm{\hat p}\bm {\hat p}$}}}{\sin^3\theta}
-\frac12 \cot\theta\,  \bm \IIo^\Sigma_{\scalebox{.7}{$\bm{\hat p}\bm {\hat p}$}}
\Big)
\, .
$$
\end{theorem}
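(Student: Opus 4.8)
The plan is to specialize the general surface formul\ae\ of Theorem~\ref{anomaly} to the conformal unit case and then reorganize the resulting curvature expressions using the Gauss--Bonnet theorem, so that the two contributions $\tfrac12\int_\Sigma\bm Q^{\scalebox{.7}{$\bm\sigma$}}$ and $\tfrac12\int_{\partial\Sigma}\bm T^{\scalebox{.7}{$\bm\sigma$}}$ assemble into the stated topological, Willmore and corner pieces. First I would exploit the conformal unit condition: since $\bm\sigma$ is a conformal unit defining density, Equation~\nn{one} gives $\bm{\mathcal S}=1+\mathcal O(\bm\sigma^3)$, so along $\Sigma$ and to the order needed for the second-order operators appearing in Theorem~\ref{anomaly} one has $\sqrt{\bm{\mathcal S}}=1$, the multiplication operator $\I2^{-1}$ acts as the identity, and the correction term $(\bm\nabla_a\bm{\mathcal S}^{-1})\bm g^{ab}\triangledown^{\bm\sigma}_b$ in $\mathcal L$ drops out because $\bm\nabla\bm{\mathcal S}^{-1}=\mathcal O(\bm\sigma^2)$. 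Hence $\mathcal L$ collapses to the Laplace--Robin operator $\D$, and the bulk integrand reduces to $\bm Q^{\scalebox{.7}{$\bm\sigma$}}=\D^2\log\bm\tau$ restricted to $\Sigma$.

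Because the anomaly is regulator independent I am free to choose $\bm\tau$; I would pick the scale adapted to $\bm\sigma$ so that $\log\bm\tau$ and its normal derivatives are governed by $\rho$ and $\J$ through Equations~\nn{LR}--\nn{logact}. Expanding $\D^2\log\bm\tau$ with the singular Yamabe data and the Gauss equation then identifies $\bm Q^{\scalebox{.7}{$\bm\sigma$}}\big|_\Sigma$ with the extrinsic $Q$-curvature of $\widetilde\Sigma\hookrightarrow M$, which in dimension two equals $K_{g_\Sigma}-\tfrac12\bm\IIo_{ab}^\Sigma\bm\IIo_\Sigma^{ab}$ (cf.~\cite{GWvol}), where $K_{g_\Sigma}$ is the Gauss curvature of $(\Sigma,g_\Sigma)$. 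Thus $\tfrac12\int_\Sigma\bm Q^{\scalebox{.7}{$\bm\sigma$}}=\tfrac12\int_\Sigma K_{g_\Sigma}-\tfrac14\int_\Sigma\bm\IIo_{ab}^\Sigma\bm\IIo_\Sigma^{ab}$. Applying the Gauss--Bonnet theorem for the bounded surface $\Sigma$, namely $\int_\Sigma K_{g_\Sigma}+\int_{\partial\Sigma}\kappa_g=2\pi\chi_\Sigma$, converts the intrinsic term to $\tfrac12\int_\Sigma K_{g_\Sigma}=\pi\chi_\Sigma-\tfrac12\int_{\partial\Sigma}\kappa_g$, with $\kappa_g$ the geodesic curvature of $\partial\Sigma$ inside $(\Sigma,g_\Sigma)$. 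This already supplies the $\pi\chi_\Sigma$ and $-\tfrac14\int_\Sigma\bm\IIo\,\bm\IIo$ terms of the claimed formula, and leaves the single boundary integral $-\tfrac12\int_{\partial\Sigma}\kappa_g+\tfrac12\int_{\partial\Sigma}\bm T^{\scalebox{.7}{$\bm\sigma$}}$ to be matched with the stated boundary integrand.

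The remaining step is the corner analysis along $\partial\Sigma=\Sigma\cap\partial D$, and it is here that the main difficulty lies. I would evaluate the transgression $\bm T^{\scalebox{.7}{$\bm\sigma$}}$ of Theorem~\ref{anomaly} in the unit scale. In its first term $\mathcal L\log\bm\tau\to\D\log\bm\tau$ and $\bm{\mathcal S}\to1$, while $\bm{\mathcal S}_{\partial D}$, the $\mathcal S$-curvature of the restriction of $\bm\sigma$ to $\partial D$, equals $\sin^2\theta$ in this scale, since the tangential gradient of $\sigma$ along $\partial D$ has squared length $1-\cos^2\theta$; this is precisely the origin of the $\sin^3\theta$ and $\cot\theta$ weights, the nonvanishing of $\theta$ guaranteeing invertibility. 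The second, total-derivative-type term $\bm{\hat n}^a_{\partial\Sigma|\partial D}\bm\nabla^{\partial D}_a(\bm{\hat n}_{\partial D}.\bm n^{\bm\tau}/(\bm{\mathcal S}\bm{\mathcal S}_{\partial D}))$ is then expanded using that $\bm{\hat n}_{\partial D}.\bm n^{\bm\tau}$ and $\bm{\mathcal S}_{\partial D}$ are trigonometric in $\theta$, and that differentiating along $\partial D$ brings in the second fundamental form of $\partial D$.

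Finally I would write the two conormals $\bm{\hat p}$ (of $\partial\Sigma\hookrightarrow\widetilde\Sigma$) and $\bm{\hat q}$ (of $\partial\Sigma\hookrightarrow\partial D$) in an ambient frame adapted to the angle $\theta$, and express both $\bm T^{\scalebox{.7}{$\bm\sigma$}}$ and the geodesic curvature $\kappa_g$ in terms of the trace-free second fundamental form components $\bm\IIo^\Sigma_{\bm{\hat p}\bm{\hat p}}$ and $\bm\IIo^{\partial D}_{\bm{\hat q}\bm{\hat q}}$, the mean curvatures, and powers of $\cos\theta$ and $\sin\theta$; the conformal covariance of the trace-free second fundamental form should force all trace and scalar-curvature pieces to cancel. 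Combining $-\tfrac12\kappa_g$ with $\tfrac12\bm T^{\scalebox{.7}{$\bm\sigma$}}$ should collapse to
\[
\frac{\bm\IIo^{\partial D}_{\bm{\hat q}\bm{\hat q}}-\cos\theta\,\bm\IIo^\Sigma_{\bm{\hat p}\bm{\hat p}}}{\sin^3\theta}-\tfrac12\cot\theta\,\bm\IIo^\Sigma_{\bm{\hat p}\bm{\hat p}}\,,
\]
completing the proof. I expect the hard part to be precisely this reconciliation: correctly tracking the non-orthogonal intersection of $\Sigma$ and $\partial D$ so that the relative $\cos\theta$ factor between the two conormal directions and the $\sin^3\theta$, $\cot\theta$ weightings emerge, while every non-conformally-invariant contribution (including that hidden in the geodesic curvature $\kappa_g$) cancels against the corresponding pieces of $\bm T^{\scalebox{.7}{$\bm\sigma$}}$.
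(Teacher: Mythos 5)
Your proposal follows essentially the same route as the paper: specialize Theorem~\ref{anomaly} using $\bm{\mathcal S}=1+\mathcal O(\bm\sigma^3)$ so that $\mathcal L\to\D$ and the bulk term becomes the extrinsic $Q$-curvature $\bar{\J}-\tfrac12\bm\IIo_{ab}^\Sigma\bm\IIo_\Sigma^{ab}$, trade the intrinsic piece for $\pi\chi_\Sigma$ minus a geodesic-curvature boundary term via Chern--Gau\ss--Bonnet, and evaluate $\bm T^{\scalebox{.7}{$\bm\sigma$}}$ in the scale $\bm\tau=[g\,;\,1]$ using $\bm{\mathcal S}_{\partial D}\stackrel{\partial\Sigma}=\sin^2\theta$ and the adapted conormal frame, whereupon the geodesic curvature hidden in $\bm T^{\scalebox{.7}{$\bm\sigma$}}$ cancels the Gauss--Bonnet boundary term. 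The only caveat is that your outline leaves the final trigonometric/second-fundamental-form bookkeeping (the analogues of the paper's Lemma~\ref{first}, Lemma~\ref{SL}, Proposition~\ref{geodesic} and the computation of $\nabla_{\hat q}{\mathcal S}_\Lambda$) as an expectation rather than carrying it out, but the strategy is correct and matches the paper's.
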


\noindent
The proof of this theorem is given in Section~\ref{Yamabe}, where we also give formul\ae\ for the divergences and an example in which we solve the singular Yamabe condition and compute the corresponding anomaly.
The non-vanishing assumption  on the angle~$\theta$ between the hypersurfaces~${\Sigma}$ and~$\partial D$ in the above theorem is
imposed because of the solid cylinder assumption of Theorem~\ref{expansion}.

\section{Volume expansions}\label{volexps}

In this section we prove Theorems~\ref{expansion} and ~\ref{transform}, 
as well as providing a simple example of the latter result.
The proof of Theorem~\ref{expansion} follows closely the proof of Theorem~3.1 of~\cite{GWvol},
and relies on the same distributional identities employed there (see in particular Section~2.8 of that work). In particular, note that the solid cylinder topology assumption of this theorem is imposed as technical requirement to facilitate
an analysis based on simple distributional identities. Further analysis, may lead to a wider range of applicability of the result.

\begin{proof}[Proof of Theorem~\ref{expansion}]
The main task is to establish the  quoted volume expansion.
First, it is easy to verify that the leading behavior of the regulated volume in the small $\varepsilon$ limit behaves like~$\varepsilon^{1-k}$. Hence we consider the quantity
$$
-\varepsilon^k\frac{d\Vol_{\varepsilon}}{d\varepsilon}=\varepsilon^k
\int_D\frac{\mu}{\sigma^k}\, \delta\big(\sigma-\varepsilon\big)
=\int_D\mu\, \delta\big(\sigma-\varepsilon\big)=: I(\varepsilon)
\, .
$$
Here we have employed the distributional identities $d\theta(x)/dx=\delta(x)$ and~$\delta(\alpha x)=\tfrac1\alpha\delta(x)$.
This is justified by the solid cylinder assumption which allows us to employ $\sigma$ as a coordinate on $D$ in a neighborhood of $\Sigma$.
Upon performing the Dirac delta function integration, the quantity $I(\varepsilon)$ is given by the integral of a positive smooth measure over 
the regulating hypersurface $\Sigma_{\varepsilon}$, and is therefore a smooth function of $\varepsilon$.
Therefore, we may expand $I(\varepsilon)$ as a Taylor polynomial plus remainder about $\varepsilon=0$. The coefficients are therefore given by integrals over differentiated delta functions of $\sigma$. We then integrate the relation $-\varepsilon^k\frac{d\Vol_{\varepsilon}}{d\varepsilon}=I(\varepsilon)$ to obtain the quoted result for the regulated volume~$\Vol_\varepsilon$. The renormalized volume~$\Vol_{\rm ren}$ is the undetermined constant term in this integration. The locality statement for the coefficients of the divergences $V_{\ell}$ and the anomaly ${\mathcal A}$ is simply a statement of the meaning of integrals over  differentiated delta functions. 
The independence of the anomaly measure $a(\bm \sigma)$ of the choice of $\sigma$ follows from the identity $\Omega^k \mu\, \delta^{(k-1)}(\Omega\sigma)  =\mu\,  \delta^{(k-1)}(\sigma)$ which holds for positive functions~$\Omega$.
Finally,  presence of the anomaly term is analyzed by examining for which values of $k$ the quantity $I(\varepsilon)/\varepsilon^k$ includes a $1/\varepsilon$ term when $I(\varepsilon)$ is substituted for its Taylor polynomial.  
\end{proof}

The following proof is inspired by the method outlined in~\cite{BaumJuhl} for the special case of Poincar\'e--Einstein structures.
\begin{proof}[Proof of Theorem~\ref{transform}]
First we note that for 
$\bm \sigma$ and $\bm \tau$ given respectively by $[\mu\, ;\, \sigma]$ and $
[\mu\, ;\, \tau]$, we have $\Vol_{\varepsilon}(D_+;\bm \sigma,\bm\tau)=\int_D \tfrac{\mu}{\sigma^k}  \theta\big( \tfrac\sigma\tau-\varepsilon\big)$ so, following {\it mutatis mutandis} the computation of $I(\varepsilon)$ in the first display of the previous proof, we have
\begin{eqnarray*}
\frac{d \Vol_\varepsilon(D_+; \bm \sigma,e^{-\omega t}\bm \tau)}{dt}&=&
\int_D \frac{\mu}{\sigma^k}\  \omega\,  e^{\omega t}\, \frac{\sigma}{\tau}\, \delta\Big(e^{\omega t}\, \frac\sigma\tau-\varepsilon\Big)\\[1mm]
&=&
\frac{1}{\varepsilon^{k-1}}\, 
\int_D\frac{\mu}{\tau^k}\ \omega e^{k\omega t}\, 
\delta\Big(e^{\omega t}\, \frac\sigma\tau-\varepsilon\Big)
\ =: \frac{ I(\varepsilon;\omega,t)}{\varepsilon^{k-1}}\, .
\end{eqnarray*}
Again, $I(\varepsilon;\omega,t)$ can be expressed in terms of its  Taylor polynomial as
$I(\varepsilon)=I(0)+\varepsilon I'(0) + 
\cdots+
\frac{1}{(k-1)!}\, \varepsilon^{k-1}  I^{(k-1)}(0)+\varepsilon^k R(\varepsilon)$ (suppressing $(\omega,t)$). 
For the  $\varepsilon^{k-1}$ term in this  we find
$$
I^{(k-1)}(0)=(-1)^{k-1}\, \int_D \mu\,  \omega\   \delta^{(k-1)}(\sigma)\, .
$$
In the above we relied on the identity $\delta^{(\ell)}(\alpha x)=\frac1{\alpha^{\ell+1}}\delta^{(\ell)}( x)$.
Focusing on the  $\varepsilon$ independent term in $d \Vol_\varepsilon(D_+; \bm \sigma,e^{-\omega t}\bm \tau)/ dt$ 
gives
$$
\frac{d\Vol_{\rm ren}(e^{-\omega t}\tau)}{dt}=\frac{(-1)^{k-1}}{(k-1)!}\, 
\int_D \mu\,  \omega\   \delta^{(k-1)}(\sigma)\, .
$$
The right hand side is $t$ independent, so
$$
\Vol_{\rm ren}(e^{-\omega}\tau)-\Vol_{\rm ren}(\tau)=
\frac{(-1)^{k-1}}{(k-1)!}\, 
\int_D \mu\,  \omega\   \delta^{(k-1)}(\sigma)\, .
$$
\end{proof}

\addtocontents{toc}{\SkipTocEntry}
\subsection{Example}

Let $a:{\mathbb R}\stackrel{\raisebox{-3mm}{$\sss \rm smooth$}}{
\xrightarrow{\hspace*{1cm}}}{\mathbb R}$, and
consider $M\subset {\mathbb R}^2$ with measure $\mu=  \ext\! x \ext\! y \, \big(1+y a(x)\big)$ (where $M=\{(x,y)|\,  1+y a(x)>0\}$). Let $D$ be the rectangle defined by $-1<x<1$ and $-L<y<L$ where $L$ is chosen such that $D\subset M$. Take as  defining function
$
\sigma = y
$
whence $\Sigma$ is the interval $(-1,1)$ along the $x$-axis. This geometry is depicted below:
\begin{center}
\includegraphics[scale=.37]{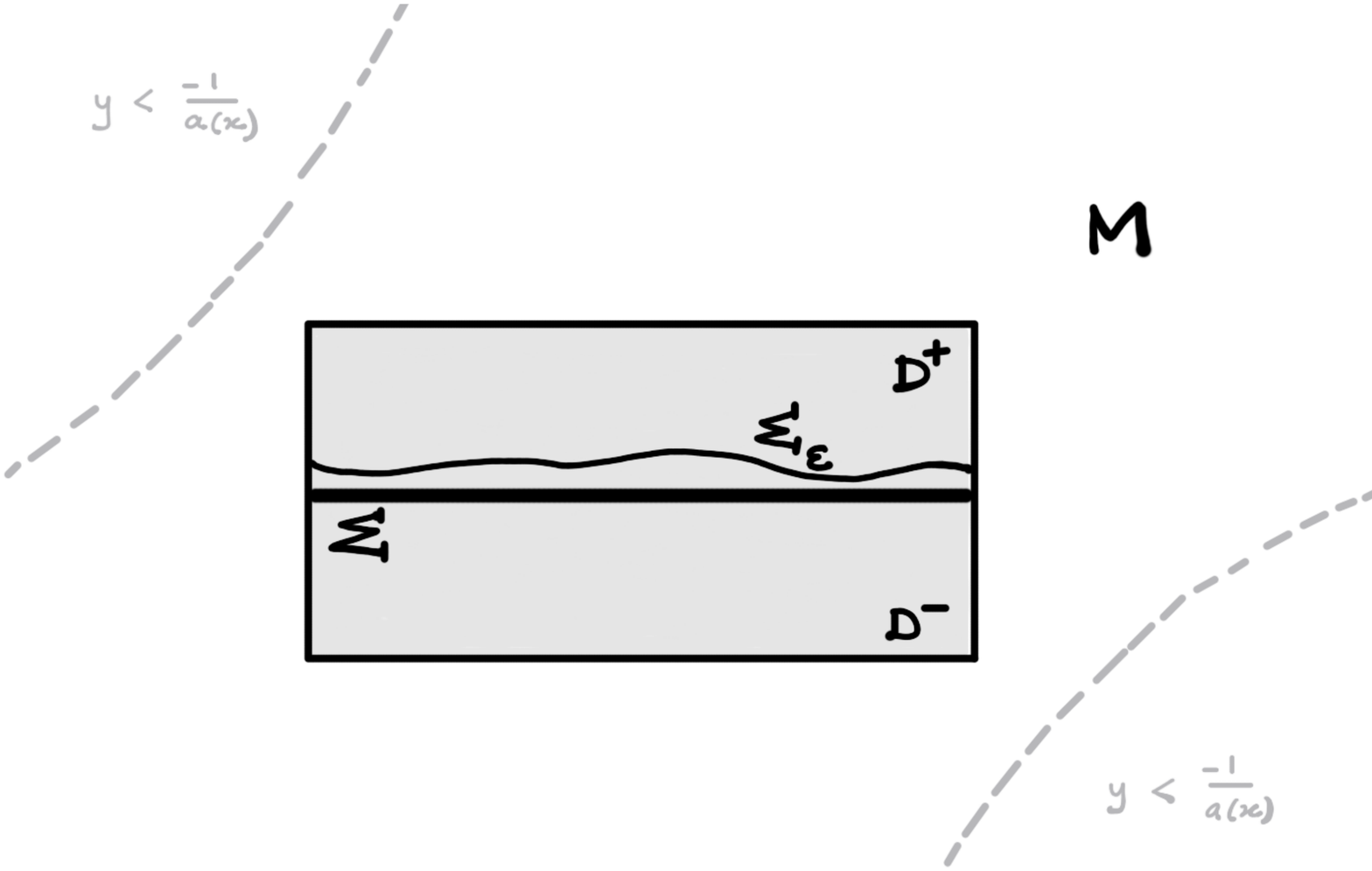}
\end{center}
Let us compute the regulated volume for the case $k=2$ (so $\mu^o=\ext\! x \ext\! \, y (1+a)/y^2$ with respect to the class of regulators $\tau=\tau(x)$. We find (labeling the data $(\bm\sigma,\bm\tau)$ by $(\mu^o,\tau)$)
\begin{eqnarray*}
\Vol_\varepsilon(D_+;\mu^o,\tau)=
\int_{-1}^1\ext\!x \int_{\varepsilon\tau(x)}^L\, \frac{\ext\!y\big(1+y a(x)\big)}{y^2}
=
\frac{\int_{-1}^1 \frac{\ext\!x}{\tau(x)}}\varepsilon
-\log(\varepsilon) \int_{-1}^1 \ext\!x \, a(x)
+\Vol_{\rm ren}
\, ,
\end{eqnarray*}
where the renormalized volume is
$$
\Vol_{\rm ren}=
-\frac2L +\log(L) \int_{-1}^1 \ext\!x\,  a(x)
-\int_{-1}^1 \ext\!x \, a(x) \log \tau(x)
\, .
$$
Observe that the anomaly ${\mathcal A}=-\int_{-1}^1 \ext\!x \, a(x)$,
and that even when this vanishes the regulator dependence of the renormalized volume
 $-\int_{-1}^1 \ext\!x \, a(x)\log\tau(x)$
  is in general not zero. {\it I.e.}, vanishing anomaly need not imply regulator independence of the renormalized volume, but pointwise vanishing of $a(x)$ does.

\section{$Q$ and $T$ curvatures}\label{QTsect}

In the remainder of the paper,  we specialize to
a conformal $d$-manifold $(M,\cc=[g])$, so $k=d$ and~$\bm \mu = \big[{\mu^g } \big]$.
All densities will be conformal densities. 
Our aim in this section is to prove Theorem~\ref{Q+T} but first we need to assemble some basic
differential operators defined on densities as well as key results involving the Laplace--Robin operator and distribution-valued densities. 

For scalar-valued weight~$w=0$ conformal densities~$\bm f:=[g\, ;  f]$, it is convenient to denote  the exterior derivative on functions by
\begin{equation}\label{conformalgradient}
\bm \nabla {\bm f}:=[\hh g\, ; \, \ext\! f]\, .
\end{equation}
We will sometimes call this the {\it conformal gradient operator} and its codomain is covector-valued, weight $w=0$ conformal densities. 
When a weight~$w=1$ conformal density $\bm \tau$ is available, we may form the {\it coupled conformal gradient operator} which is defined acting on scalar-valued densities $\bm f=[g\, ;\, f]$ of arbitrary weight~$w$, according to
\begin{equation}
\label{triangle}
\triangledown^{\scalebox{.7}{$\bm\tau$}} \bm f := [g\, ;\, \tau \nabla f 
-w f \nabla \tau]\, ,
\end{equation}
with  weight $w+1$ covector-valued conformal densities as output. 
If  $\bm \tau$ is a scale,~$\bm \tau^{-1} \triangledown^{\scalebox{.7}{$\bm\tau$}}$ is a {\it Weyl connection}.
When $\bm f$ and $\bm g$ are conformal densities whose product $\bm f \, \bm g$ has weight zero, the conformal and coupled conformal gradients are related by the Leibniz-type rule
$$
\bm \nabla
(\bm f\bm g)=\bm \tau\big(\bm f\, 
\triangledown^{\scalebox{.7}{$\bm\tau$}}\bm g
+
\bm g\, 
\triangledown^{\scalebox{.7}{$\bm\tau$}}\bm f
\big)\, .
$$
If $\bm \tau$ is a true scale with $\bm \tau=
[g\, ;\, 1]$, then the coupled gradient recovers the exterior derivative in the sense $\triangledown^{\scalebox{.7}{$\bm\tau$}} \bm f = [g\, ;\, \ext\! f]$.

\medskip

We also need to handle integrals over a  delta function of a defining density $\bm \sigma$.
If a weight~$1-\dim M$ conformal 
density $\bm f$ on $(M,\cc)$ is an extension of a conformal density~${\bm f}_\Sigma$ of the same weight on the induced hypersurface conformal manifold $({\Sigma},\bm{c}_\Sigma)$ then, up to a measure factor given by the~${\mathcal S}$-curvature, an undifferentiated delta function integration can be performed according to
\begin{equation}\label{deltaintegral}
\int_{\,  D} \bm \delta \, \sqrt{\bm{\mathcal S}} \, \bm f
= \int_\Sigma \bm {f}_\Sigma\, .
\end{equation}
See~\cite[Section 3]{GGHW15} (or~\cite{GWvol}) for the proof of this identity. Also note that this identity already establishes that the first
term in the $\varepsilon$-expansion~\nn{epsilon_expansion}
produces the leading divergence in Equation~\nn{boundaryleading}.

\medskip

Next, we  need to treat differentiated Dirac delta functions. The following proposition (proved in~\cite[Section 3]{GWvol})  allows us to handle up to $(d-2)$ derivatives on the Dirac delta:
\begin{proposition}\label{delta_deriv}
Let~${\mathbb Z}_{>0}\ni j< d-1$ and suppose the~$\bm{\mathcal S}$-curvature is nowhere vanishing. Then 
$$
(\I2^{-1}\!\D)^j \, \bm \delta= (d-j-1)\cdots (d-3)(d-2)\, \bm \delta^{(j)}\, .
$$
\end{proposition}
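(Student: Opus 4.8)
The plan is to prove Proposition~\ref{delta_deriv} by induction on $j$, with the base case $j=0$ being the trivial identity $\bm\delta=\bm\delta^{(0)}$. The inductive step requires understanding how the composite operator $\I2^{-1}\D$ acts on the distributional density $\bm\delta^{(j-1)}$, so first I would compute the action of the Laplace--Robin operator $\D$ on a density of the form $\bm\delta^{(m)}=[g\,;\,\delta^{(m)}(\sigma)]$. Since $\delta^{(m)}(\sigma)$ carries homogeneity $-m-1$ (as recorded in the excerpt via $\bm\delta^{(\ell)}=[\Omega^k\mu\,;\,\Omega^{-\ell-1}\delta^{(\ell)}(\sigma)]$), the relevant weight to feed into formula~\nn{LR} is $w=-m-1$. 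The key computational input is the standard distributional identities
\begin{equation*}
\sigma\,\delta^{(m)}(\sigma)=-m\,\delta^{(m-1)}(\sigma)\, ,\qquad
\nabla_n\,\delta^{(m)}(\sigma)=|n|_g^2\,\delta^{(m+1)}(\sigma)+(\text{lower-derivative terms})\, ,
\end{equation*}
where $n=\nabla\sigma$ and the lower-order pieces come from differentiating $|n|_g^2$ and from the $\rho$ and $\J^{\,g}$ terms in~\nn{LR}.

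The heart of the matter is then to show that after substituting $w=-m-1$ into~\nn{LR} and applying the two identities above, the action of $\D$ on $\bm\delta^{(m)}$ produces a leading term proportional to $\bm\delta^{(m+1)}$ with a clean numerical coefficient, while all the $\bm{\mathcal S}$-curvature factors and subleading terms are exactly what is needed to reconstitute $\I2$ (multiplication by $\bm{\mathcal S}$) and the inductive combinatorial prefactor. Concretely, I expect the $\nabla_n$ term to contribute the $|n|_g^2$ factor that, combined with the $\sigma\Delta$ term via $\sigma\delta^{(m)}=-m\delta^{(m-1)}$, reassembles the full $\bm{\mathcal S}$-curvature $\bm{\mathcal S}=|n|_g^2+2\rho\sigma$ from~\nn{scurvy}. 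The factor $(d+2w-2)=(d-2m-4)$ multiplying the $\nabla_n$ term is what should, after setting $w=-m-1$ and tracking the index shift, deliver the descending-product coefficient $(d-j-1)\cdots(d-2)$. Applying $\I2^{-1}$ then strips off precisely the $\bm{\mathcal S}$ factor, leaving $\I2^{-1}\D\,\bm\delta^{(m)}=(d-m-2)\,\bm\delta^{(m+1)}$ up to verifying the constant, which closes the induction since multiplying the inductive hypothesis coefficient $(d-m)\cdots(d-2)$ by $(d-m-2)$ advances it correctly.

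The main obstacle I anticipate is the careful bookkeeping of weights and the subleading terms in $\D\,\bm\delta^{(m)}$: the formula~\nn{LR} contains both a $(\nabla_n+w\rho)$ piece and a $-\tfrac1d\sigma(\Delta^g+w\J^{\,g})$ piece, and one must check that the $\rho\sigma$ contributions and the pieces where $\nabla_n$ hits the smooth coefficient $|n|_g^2$ rather than the delta combine to give exactly $2\rho\sigma\,\delta^{(m+1)}$, completing $\bm{\mathcal S}$, with no stray lower-derivative remainder surviving. The restriction $j<d-1$ in the hypothesis is precisely the condition $d+2w-2\neq 0$ that guarantees the leading coefficient $(d-2m-4)$ does not vanish as $m$ ranges over $0,\dots,j-1$, so the operator genuinely raises the delta-derivative order at each step; I would verify that this inequality propagates correctly through the induction. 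Since the excerpt states this proposition is proved in~\cite[Section~3]{GWvol}, I would also note that the computation is the density-calculus repackaging of those distributional identities, and that the nowhere-vanishing assumption on $\bm{\mathcal S}$ is exactly what makes $\I2^{-1}$ well-defined throughout $D$.
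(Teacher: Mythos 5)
Your overall strategy---induction on $j$, reduced to the single computation $\D\,\bm\delta^{(m)}=(d-m-2)\,\bm{\mathcal S}\,\bm\delta^{(m+1)}$ via the identities $\sigma\hh\delta^{(m)}(\sigma)=-m\hh\delta^{(m-1)}(\sigma)$ and $\nabla_a\delta^{(m)}(\sigma)=n_a\delta^{(m+1)}(\sigma)$ with $w=-m-1$---is the right one (the paper itself gives no proof here, deferring to~\cite[Section~3]{GWvol}, and that proof is exactly this computation). But two points in your plan are wrong or would derail you. First, your explanation of the hypothesis $j<d-1$ is incorrect: it is \emph{not} the condition $d+2w-2=d-2m-4\neq 0$. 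That factor can perfectly well vanish in the allowed range (e.g.\ $d$ even and $m=(d-4)/2\leq d-3$) without harming the argument, because the coefficient of $\bm\delta^{(m+1)}$ in $\D\,\bm\delta^{(m)}$ is not $(d-2m-4)$ alone but
$$(d-2m-4)+(m+2)=d-m-2\,, $$
the extra $(m+2)$ coming from the $\sigma\hh|n|^2\delta^{(m+2)}$ piece of the $\sigma\Delta$ term via $\sigma\hh\delta^{(m+2)}(\sigma)=-(m+2)\hh\delta^{(m+1)}(\sigma)$. What must not vanish is $d-m-2$, which fails exactly at $m=d-2$; this is the real content of $j<d-1$ and is consistent with the paper's remark that $\D\,\bm\delta^{(d-2)}=0$. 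Had you "verified that the inequality propagates" on your stated premise, you would have found counterexamples and been misled.

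Second, be aware that Equation~\nn{LR} as printed carries a factor $\tfrac1d$ on the $\sigma(\Delta^g+w\J^{\,g})$ term which is inconsistent with~\nn{logact} and with the Proposition itself: already in the flat model ($\sigma=x$, $|n|=1$, $\rho=0$) it would give $\I2^{-1}\D\,\bm\delta=(d-4+\tfrac2d)\,\bm\delta'$ rather than $(d-2)\,\bm\delta'$. The computation closes only with the standard normalization $\D=(d+2w-2)(\nabla_n+w\rho)-\sigma(\Delta^g+w\J^{\,g})$ of~\cite{GW,GWvol}; with that, the $\rho$- and $\nabla\hh.\hh n$-terms do assemble into $\bm{\mathcal S}=|n|_g^2+2\rho\sigma$ exactly as you predict (the $\J$-terms cancel outright), and $\I2^{-1}$ strips the $\bm{\mathcal S}$ to give $(d-m-2)\,\bm\delta^{(m+1)}$. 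Two small further corrections: $\nabla_n\delta^{(m)}(\sigma)=|n|_g^2\,\delta^{(m+1)}(\sigma)$ holds exactly, with no lower-derivative remainder (those arise only from the $\rho$ and $\sigma\Delta$ terms); and the inductive bookkeeping in your last sentence is off by one---the step-$m$ coefficient $(d-m-1)\cdots(d-2)$ is multiplied by the new factor $(d-m-2)$ to reach step $m+1$.
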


\medskip

The above proposition implies that 
$$
\D \bm\delta^{(d-2)}=0\, .
$$
Hence, we need a different method to handle the critical case of integrals involving $\bm \delta^{(d-1)}$. The main idea, developed in~\cite{GWvol}, is to introduce a log density into the problem.
The following lemma generalizes Lemma~3.7 of~\cite{GWvol} to include terms that no longer vanish when  $\partial \Sigma\neq \emptyset$.

\begin{lemma}\label{lastderiv}
Let~$\log\bm \tau$ be a weight one log density (where $\bm \tau$ is any true scale) and
suppose the~$\bm{\mathcal S}$-curvature is nowhere vanishing, then
\begin{equation}\label{logform}
\int_{\,  D}\bm{\delta}^{(d-1)}=
-\int_{\, D}
\bm{\delta}^{(d-2)}
{\mathcal L}  \log \bm \tau
 \ + \ 
 \int_{\scalebox{.8}{$\partial D$}} 
 \bm \delta^{(d-2)}\, 
 \frac{\bm{\hat n}_{\scalebox{.8}{$\partial D$}} \, .\, 
 \bm n^{\scalebox{.7}{${\bm\tau}$}}
 }{\bm {\mathcal S}}
 \, ,
\end{equation}
where $\bm n^{\scalebox{.7}{${\bm\tau}$}}=\bm \tau^{-1} 
 \triangledown^{\scalebox{.7}{$\bm \tau$}}
 \bm \sigma$
and the operator
$
{\mathcal L}=\I2^{-1}\circ \D  -
(\bm \nabla_a \bm {\mathcal S}^{-1}) \, {\bm g}^{ab}
  \triangledown^{\scalebox{.7}{$\bm\sigma$}}_b$.
\end{lemma}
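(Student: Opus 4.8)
The plan is to adapt the proof of Lemma~3.7 of~\cite{GWvol}, which establishes the $\partial\Sigma=\emptyset$ case, while keeping the boundary contribution over $\partial D$ that is now present. I fix a metric $g\in\cc$ and work with the representatives $\bm\sigma=[g;\sigma]$, $\bm\tau=[g;\tau]$, $\log\bm\tau=[g;\log\tau]$; since both sides of~\nn{logform} are conformally invariant densities, it suffices to verify the identity in this single scale. The two structural inputs are the critical-order identity $\D\bm\delta^{(d-2)}=0$ coming from Proposition~\ref{delta_deriv}, and the formal self-adjointness of the Laplace--Robin operator for the density pairing $(\bm u,\bm v)\mapsto\int_D\bm u\,\bm v$, whose integration by parts now produces a term over $\partial D$. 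The role of the log density is to reach the critical derivative: the single-step recursion behind Proposition~\ref{delta_deriv} has the form $\I2^{-1}\D\,\bm\delta^{(j)}=c(w)\,\bm\delta^{(j+1)}$ on a weight-$w$ delta density, with $c(w)$ affine in $w$ and vanishing precisely at the critical weight $w_\star=1-d$ of $\bm\delta^{(d-2)}$ (this is exactly $\D\bm\delta^{(d-2)}=0$). A weight-one log density behaves like the weight-derivative $\partial_w$ of a weight-$w$ family, so acting with $\I2^{-1}\D$ on $\log\bm\tau\,\bm\delta^{(d-2)}$ returns $c'(w_\star)\,\bm\delta^{(d-1)}$, which survives although $c(w_\star)=0$; the logarithm \emph{unlocks} the order-raising that $\D$ cannot perform at the critical weight.

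Collecting the remaining terms produced by $\I2^{-1}\D$ on $\log\bm\tau\,\bm\delta^{(d-2)}$ gives the bulk side. The pieces in which a derivative (or the weight-derivative of the operator) falls on $\log\bm\tau$ assemble, using the log action~\nn{logact}, into $(\mathcal L\log\bm\tau)\,\bm\delta^{(d-2)}$; here the first-order correction $-(\bm\nabla_a\bm{\mathcal S}^{-1})\bm g^{ab}\triangledown^{\bm\sigma}_b$ built into $\mathcal L$ is precisely what absorbs the commutator $[\D,\I2^{-1}]$ together with the cross terms generated by the principal part $-\tfrac1d\sigma\Delta$ of $\D$. The pieces in which both derivatives land on $\bm\delta^{(d-2)}$ reproduce $\log\bm\tau\cdot\I2^{-1}\D\bm\delta^{(d-2)}=0$ and drop out. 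Integrating over $D$, the expression organised as $\I2^{-1}\D(\,\cdot\,)$ is a divergence and so contributes only through $\partial D$, which is where the boundary integral originates.

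Finally I would pin down the boundary term via the divergence theorem on $D$. Because the principal part of $\D$ carries an explicit factor $\sigma$ and the surviving distribution is supported on $\{\sigma=0\}$ (so that $\sigma\,\delta^{(d-2)}(\sigma)\propto\delta^{(d-3)}(\sigma)$ collapses such contributions), the second-order part yields no net $\partial D$ term; the boundary integral comes solely from integrating by parts the first-order (Robin, $\nabla_n$) part of $\D$. This produces $\int_{\partial D}\bm\delta^{(d-2)}\,(\bm{\hat n}_{\partial D}\cdot\bm n^{\bm\tau})/\bm{\mathcal S}$ with $\bm n^{\bm\tau}=\bm\tau^{-1}\triangledown^{\bm\tau}\bm\sigma$, the conormal contraction arising from $n$ restricted to $\partial D$; note $\bm\delta^{(d-2)}$ localises this to $\partial\Sigma=\partial D\cap\widetilde\Sigma$, as a transgression should. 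Tracking the overall constants then fixes the signs in~\nn{logform}.

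The main obstacle is the bookkeeping of this boundary contribution: one must check that the second-order part of $\D$ truly contributes nothing on $\partial D$, that the first-order part delivers exactly the contraction $\bm{\hat n}_{\partial D}\cdot\bm n^{\bm\tau}$ with the correct power of $\bm{\mathcal S}$, and that the non-tensorial (additive) transformation of the log density is carried through consistently so that each side is a genuine density of the stated weight.
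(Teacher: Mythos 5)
Your proposal follows essentially the same route as the paper: both adapt the proof of Lemma~3.7 of~\cite{GWvol}, with the sole new ingredient being the $\partial D$ boundary term produced when the divergence of the conormal (sitting inside the Robin part of $\D$ via $\rho$) is integrated by parts, yielding exactly $\int_{\partial D}\bm\delta^{(d-2)}\,(\bm{\hat n}_{\partial D}.\,\bm n^{\bm\tau})/\bm{\mathcal S}$. The weight-derivative heuristics for the log density are extra narrative, but the computational core and the identification of where the transgression term originates match the paper's argument.
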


\begin{proof}
We only need augment the proof of the corresponding lemma given in~\cite{GWvol} by terms coming from the boundary of $\partial  D$ that were not needed in the closed~$\Sigma$ case studied there. Those only arise in a computation of an integral over the divergence of the conormal $n=\nabla \sigma$ (where $\bm \sigma=[g\, ;\, \sigma]$) which was performed by an integration by parts. We repeat the beginning of that computation here
\begin{eqnarray*}
\int_{ D} {\mu^g }\ {\mathcal S}^{-1} (\nabla.n) \delta^{(d-2)}&=&-\ \int_{\, D} {\mu^g }\ 
\big( {\mathcal S}^{-1} n^2\, \delta^{(d-1)} +\delta^{(d-2)}
 \nabla_n {\mathcal S}^{-1} \big)\\[1mm]
 &&+\ \int_{\partial D} \mu^{g_{\partial  D}}\ 
  \hat n_{\partial D}^a
 \, {\mathcal S}^{-1}n_a\, \delta^{(d-2)}\, .
\end{eqnarray*}
Here $\delta^{(k)}$ is shorthand for $\delta^{(k)}(\sigma)$ and we have denoted $\bm{\mathcal S}=[g\, ;\, {\mathcal S}]$. The last term is the new contribution, and its expression above follows by a straightforward application of the divergence theorem along $\partial D$ with outward unit normal $\hat n_{\partial D}$.
The proof is completed by using the coupled conformal gradient operator to express the $n_a$ in the last term above in terms of conformal densities as  $\bm \tau^{-1} 
 \triangledown_a^{\scalebox{.7}{$\bm \tau$}}
 \bm \sigma$, and then reexamining the proof of Lemma~3.7 of~\cite{GWvol} to determine its coefficient in the contribution to Equation~\nn{logform}.
\end{proof}

\medskip

One of the reasons the  Laplace--Robin operator is so useful is that it enjoys a 
formally self-adjoint property~\cite[Section 2]{GWvol}:

\begin{theorem}\label{parts}
Let~$\bm f$ and~$\bm g$ be densities of weight~$1-d-w$ and~$w$, respectively. Then the Laplace--Robin operator~$\D$ for any weight one density $\bm \sigma=[g\, ;\, \sigma]$ is formally self-adjoint and moreover
$$
\bm f \D \bm g - (\D \bm f)\,  \bm g
+\divergence \bm j=0\, ,
$$
where the weight~$2-d$ covector-valued density 
$$\bm j_a=\big [\hh g\, ;\, \sigma\big(f\, \nabla_a g-(\nabla_a f)\, g\big)-(d+2w-1)\, fg\, \nabla_a \sigma\big]\, .$$
\end{theorem}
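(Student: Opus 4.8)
The plan is to prove the displayed pointwise divergence identity directly; formal self-adjointness is then its immediate integral consequence, since integrating over $D$ and applying the divergence theorem turns $\int_D\big(\bm f\D\bm g-(\D\bm f)\bm g\big)$ into a flux of $\bm j$ across $\partial D$, which vanishes in the closed case. The first observation is that both sides of the claimed identity are weight $-d$ scalar densities: the left-hand combination because $\bm f$ has weight $1-d-w$ and $\D\bm g$ has weight $w-1$, and $\divergence\bm j$ because $\bm j_a$ has weight $2-d$, so its index-raised form $\bm j^a=\bm g^{ab}\bm j_b$ has weight $-d$ and the Levi--Civita divergence of a weight $-d$ vector density is conformally invariant (this invariance is precisely what pins down the weight $2-d$ of $\bm j_a$ in the statement, and it is worth verifying carefully as a preliminary since it is what licenses the next step). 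Consequently it suffices to verify the identity for the representative functions in a single, arbitrarily chosen, metric $g\in\cc$.

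Next I would expand $\bm f\D\bm g$ and $(\D\bm f)\bm g$ in the scale $g$ using the explicit second-order formula~\nn{LR}, keeping careful track of the weight of $\bm f$, namely $w':=1-d-w$, so that the leading coefficient $d+2w'-2$ appearing in $\D\bm f$ equals $-(d+2w)$. On subtracting, the result sorts into three sectors: second-order terms proportional to $\sigma\big(f\Delta g-g\Delta f\big)$; first-order terms proportional to $f\nabla_n g$ and to $g\nabla_n f$, with $n=\nabla\sigma$; and zeroth-order terms proportional to $\rho\,fg$ and to $\J\sigma fg$.

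In parallel I would compute $\nabla_a j^a$ for $j^a=\sigma\big(f\nabla^a g-g\nabla^a f\big)-(d+2w-1)\,fg\,\nabla^a\sigma$. The decisive structural point is that the second-order sector telescopes: the cross terms $\nabla_a f\,\nabla^a g$ cancel, leaving $\nabla_a\big(\sigma(f\nabla^a g-g\nabla^a f)\big)=f\nabla_n g-g\nabla_n f+\sigma(f\Delta g-g\Delta f)$, which reproduces the full second-order sector of the expansion while also feeding into the first-order sector. Differentiating the remaining piece gives $-(d+2w-1)\big(f\nabla_n g+g\nabla_n f+fg\,\nabla\cdot n\big)$, at which point I substitute $\nabla\cdot n=-d\rho-\J\sigma$ from the definition of $\rho$, converting the last term into exactly the required $\rho\,fg$ and $\J\sigma fg$ contributions.

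Matching the two computations is then bookkeeping. The second-order contributions and the $\J\sigma fg$ contributions agree immediately, and the first-order $\nabla_n$ coefficients agree once one notes that the $-(d+2w-1)fg\,\nabla\sigma$ piece shifts the coefficients of $f\nabla_n g$ and $g\nabla_n f$ by the same amount, which combines correctly with the $+1$ and $-1$ from the telescoping term. The one genuinely arithmetic check — which I expect to be the only real obstacle — is the $\rho\,fg$ sector, where the two weight-dependent leading coefficients of $\D$ must assemble into the single factor $d(d+2w-1)$ supplied by $\nabla\cdot n$; concretely this reduces to the identity $(d+2w-2)\,w+(d+2w)(1-d-w)=-d\,(d+2w-1)$, whose validity is exactly what makes the transgression current $\bm j$ of the statement the correct choice. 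Everything else is mechanical once the telescoping of the second-order sector is recognized.
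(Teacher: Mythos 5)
Your argument is correct, and each of its checkable claims holds: with $w'=1-d-w$ the leading coefficient of $\D$ acting on $\bm f$ is $d+2w'-2=-(d+2w)$; the arithmetic identity $(d+2w-2)\,w+(d+2w)(1-d-w)=-d\,(d+2w-1)$ is true; and $\divergence\bm j$, computed via the telescoping of the second-order sector together with $\nabla_an^a=-d\rho-\J\sigma$, reproduces $(\D\bm f)\,\bm g-\bm f\D\bm g$ term by term. There is, however, nothing in the paper to compare against: Theorem~\ref{parts} is stated without proof and attributed to Section~2 of~\cite{GWvol}, so your direct verification in a fixed scale --- licensed by the preliminary observation that all three terms are weight~$-d$ densities --- is the natural route and almost certainly the intended one. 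Two cautions for a final write-up. First, Equation~\nn{LR} as printed carries a factor of $\tfrac1d$ on the $\sigma(\Delta^g+w\J^{\,g})$ term that is inconsistent with Equation~\nn{logact} and with the cited references for the Laplace--Robin operator; taken literally it would spoil the cancellation of the second-order sector by a factor of $d$, and your remark that this sector ``agrees immediately'' shows you are implicitly using the corrected formula --- you should say so explicitly. Second, the one step you flag but do not carry out, namely that $\bm j_a$ as written genuinely transforms as a weight $2-d$ covector density, deserves its two lines: under $g\mapsto\Omega^2g$ the $\Upsilon$-terms produced by $\nabla_a f$, $\nabla_a g$ and $\nabla_a\sigma$ cancel precisely because the coefficient $d+2w-1$ equals $w-w'$, and this is what justifies verifying the displayed identity in a single scale.
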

\noindent

\medskip

We  also need a variant of Lemma~\ref{lastderiv} for a weight zero density integrated against $\bm \delta^{(d-1)}$
over a closed manifold.
\begin{proposition}\label{trafoform}
Let $(\Lambda,\cc)$ be a closed conformal $d$-manifold, $\bm \sigma$ 
a defining density for a closed,  embedded hypersurface $L$ in $\Lambda$, $\bm f$ a weight zero density and $\bm \tau$ any true scale.
Then 
\begin{equation*}
\int_\Lambda {\bm f}\,  \bm \delta^{(d-1)}= 
-\frac1{(d-2)!}\, 
\int_L \frac1{\sqrt{\bm{\mathcal S}}}\, (\D\I2^{-1})^{d-2}\circ 
\big(\bm f\, 
{\mathcal L}
\,  \log {\bm \tau}
+\bm {\mathcal S}^{-1}
 \nabla_{\bm n^{\scalebox{.6}{$\bm \tau$}}} \bm f
\big)
\, ,
\end{equation*}
where $\mathcal L$ and $\bm n^{\scalebox{.7}{$\bm \tau$}}$ are as given in Lemma~\ref{lastderiv}.
\end{proposition}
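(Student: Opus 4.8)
The plan is to reduce the $(d-1)$-fold differentiated delta on the closed manifold to an \emph{undifferentiated} one, paying for the last derivative by introducing the log density $\log\bm\tau$ and paying for the weight-zero factor $\bm f$ by transferring one derivative onto it. First I would establish the $\bm f$-weighted analogue of Lemma~\ref{lastderiv}: since $\Lambda$ is closed there is no $\partial\Lambda$ contribution, and I claim
$$
\int_\Lambda \bm f\,\bm\delta^{(d-1)} = -\int_\Lambda \bm\delta^{(d-2)}\Big(\bm f\,{\mathcal L}\log\bm\tau + \bm{\mathcal S}^{-1}\nabla_{\bm n^{\bm\tau}}\bm f\Big)\, .
$$
This specializes to Lemma~\ref{lastderiv} when $\bm f=1$ (where $\nabla\bm f=0$), and is proved by repeating that lemma's distributional computation with the factor $\bm f$ carried throughout. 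The single integration by parts that converts $\nabla_n\delta^{(d-2)}(\sigma)=|n|_g^2\,\delta^{(d-1)}(\sigma)$ now also produces, by the Leibniz rule, a term in which the derivative lands on $\bm f$; in conformal-density form this is exactly $\bm{\mathcal S}^{-1}\nabla_{\bm n^{\bm\tau}}\bm f$, since $\bm n^{\bm\tau}$ represents $n=\nabla\sigma$ in the scale $\bm\tau$. All divergence and boundary contributions integrate to zero because $\Lambda$ has no boundary.

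Next I would invoke Proposition~\ref{delta_deriv} at $j=d-2$, which gives $\bm\delta^{(d-2)}=\tfrac1{(d-2)!}(\I2^{-1}\D)^{d-2}\bm\delta$, so that
$$
\int_\Lambda \bm f\,\bm\delta^{(d-1)} = -\frac1{(d-2)!}\int_\Lambda \big[(\I2^{-1}\D)^{d-2}\bm\delta\big]\,\bm h\, ,\qquad \bm h:=\bm f\,{\mathcal L}\log\bm\tau + \bm{\mathcal S}^{-1}\nabla_{\bm n^{\bm\tau}}\bm f\, .
$$
I would then transfer the operator onto $\bm h$ using the formal self-adjointness of $\D$ (Theorem~\ref{parts}). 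Because $\I2^{-1}$ is multiplication by the weight-zero density $\bm{\mathcal S}^{-1}$ (hence self-adjoint), the formal adjoint of $\I2^{-1}\D$ is $\D\,\I2^{-1}$, so $(\I2^{-1}\D)^{d-2}$ has adjoint $(\D\,\I2^{-1})^{d-2}$. On the closed $\Lambda$ every term $\int_\Lambda\divergence\bm j$ vanishes, and the complementary-weight hypothesis of Theorem~\ref{parts} is met automatically at each of the $d-2$ steps, since all integrands carry total weight $-d$ and $\D$ lowers weight by one. This yields
$$
\int_\Lambda \bm f\,\bm\delta^{(d-1)} = -\frac1{(d-2)!}\int_\Lambda \bm\delta\,\big[(\D\,\I2^{-1})^{d-2}\bm h\big]\, .
$$

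Finally I would apply the undifferentiated-delta reduction of Equation~\nn{deltaintegral}. The density $(\D\,\I2^{-1})^{d-2}\bm h$ has weight $-(d-1)$, so $\tfrac1{\sqrt{\bm{\mathcal S}}}(\D\,\I2^{-1})^{d-2}\bm h$ has weight $1-d$ and restricts to $L$; writing $(\D\,\I2^{-1})^{d-2}\bm h=\sqrt{\bm{\mathcal S}}\cdot\tfrac1{\sqrt{\bm{\mathcal S}}}(\D\,\I2^{-1})^{d-2}\bm h$ and invoking \nn{deltaintegral} collapses the ambient integral to the stated integral over $L$, giving the claimed identity. Throughout one needs $\bm{\mathcal S}$ nowhere vanishing, as required already by Proposition~\ref{delta_deriv} and Lemma~\ref{lastderiv}; this holds automatically near $L$, where $\bm{\mathcal S}=|n|_g^2\neq 0$, and the delta distributions localize there. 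The hard part will be Step one: carefully redoing the distributional integration by parts of Lemma~\ref{lastderiv} with $\bm f$ present, and confirming that the \emph{only} new contribution is $\bm{\mathcal S}^{-1}\nabla_{\bm n^{\bm\tau}}\bm f$, with no further terms generated by $\bm f$ and no surviving boundary piece.
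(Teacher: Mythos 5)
Your proposal is correct and follows essentially the same route as the paper: the paper likewise augments the integration-by-parts computation behind Lemma~\ref{lastderiv} to carry the factor $\bm f$ (obtaining exactly the extra term $\bm{\mathcal S}^{-1}\nabla_{\bm n^{\scalebox{.6}{$\bm\tau$}}}\bm f$ from the derivative landing on $f$, with no surface terms since $\partial\Lambda=\emptyset$), then applies Proposition~\ref{delta_deriv}, the formal self-adjointness of $\D$ from Theorem~\ref{parts}, and Equation~\nn{deltaintegral} in the same order. No gaps.
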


\begin{proof}
Again the proof begins with an augmentation of Lemma 3.7 of~\cite{GWvol}, but now to include the presence of $\bm f$. In particular, we need to record instances where derivatives of $\bm f=[g\, ; f]$ (and $\bm \tau=[g\, ;\, \tau]$) are introduced. As before, this is via an integration involving the divergence of the conormal, which is modified as follows:
\begin{multline*}
\int_{ \Lambda} {\mu^g }\, f\,  {\mathcal S}^{-1} (\nabla.n) \delta^{(d-2)}=-\ \int_{\, \Lambda} {\mu^g }\ 
\big(f\,  {\mathcal S}^{-1} n^2\, \delta^{(d-1)} \\+\, \delta^{(d-2)}\, f
 \nabla_n {\mathcal S}^{-1} \ +\ 
\delta^{(d-2)} {\mathcal S}^{-1}\, \nabla_n f 
 \big)\, .
\end{multline*}
There are no surface terms generated by the integration by parts performed above, because $\partial \Lambda=\emptyset$.
In terms of densities, the quantity $\nabla_n f$ can be expressed as $ \bm \tau^{-1}
(\triangledown^{\scalebox{.7}{$\bm \tau$}}
\bm\sigma)
.\bm \nabla f$. Thus we have so far established
$$
\int_{\Lambda}\bm f\, \bm{\delta}^{(d-1)}=
-\int_{\Lambda}\bm{\delta}^{(d-2)}\big[\bm f\, 
\big(\I2^{-1}\circ \D  -
(\bm \nabla_a \bm {\mathcal S}^{-1}) \, {\bm g}^{ab}
  \triangledown^{\scalebox{.7}{$\bm\sigma$}}_b
 \big)  \log \bm \tau
 +(\bm{\mathcal S} \bm \tau)^{-1}
(\triangledown^{\scalebox{.7}{$\bm \tau$}}
\bm\sigma)
.\bm \nabla f
 \big]
 \, .
$$
Now, from Proposition~\ref{delta_deriv} we have
$
\bm \delta^{(d-2)}
\tfrac{1}{(d-2)!}(\I2^{-1}\!\D)^{d-2} \, \bm \delta 
$, so
the proof is completed by
using the formal self-adjointness property of $\D$ (Theorem~\ref{parts}), remembering  that $\partial \Lambda=\emptyset$ and employing Equation~\nn{deltaintegral}.
\end{proof}

\begin{remark}
When $\partial \Sigma=\emptyset$, we can read off the anomaly operator in the conformal  setting directly from the above proposition
$$
\int_\Sigma a(\bm \sigma,\omega)
=
\frac{(-1)^{d-1}}{(d-1)!(d-2)!}\, 
\int_\Sigma \frac1{\sqrt{\bm{\mathcal S}}}\, (\D\I2^{-1})^{d-2}\circ 
\big(\omega\, 
{\mathcal L}
\,  \log {\bm \tau}
+\bm \nabla_{\bm n^{\scalebox{.6}{$\bm \tau$}}} \omega
\big)
\, .
$$
\end{remark}

We are now ready to prove Theorem~\ref{Q+T}.

\medskip

\begin{proof}[Proof of Theorem~\ref{Q+T}]
Our approach is to develop an algorithm to compute the anomaly ${\mathcal A}\propto\int_D\bm \delta^{(d-1)}$.
The first step is to apply Lemma~\ref{lastderiv} which expresses ${\mathcal A}$ as sum of integrals over $D$ and $\partial D$ of the form $\int_D \bm \delta^{(d-2)} {\mathcal L}\log\bm \tau$ and $\int_{\partial D} \bm \delta^{(d-2)} \bm t$ where $\bm t$ is a local, weight zero density. The restriction of the defining density $\bm \sigma$ to $\partial D$ gives, by restriction, a defining density $\bm\sigma|_{\partial D}$ for the hypersurface $\partial \Sigma \hookrightarrow \partial D$. Hence we can apply Proposition~\ref{trafoform} to express $\int_{\partial D} \bm \delta^{(d-2)} \bm t$ as an integral over $\partial \Sigma$ of a local $\bm \tau$-dependent density. 
To perform the integral $\int_D \bm \delta^{(d-2)} {\mathcal L}\log\bm \tau$ we proceed by using  exactly the method as described in the last paragraph of the proof of Proposition~\ref{trafoform}, except now there are boundary terms obtained from applying the divergence theorem to the $\divergence {\bm j}$ term generated when using the formal self-adjoint property for the Laplace--Robin operator. This gives
$$
\int_D\bm \delta^{(d-1)}=
\frac{(-1)^d}{(d-2)!}\,\left[ 
\int_\Sigma \frac1{\sqrt{\bm{\mathcal S}}}\, (-\D\I2^{-1})^{d-2}\circ  
(-{\mathcal L})
\,  \log {\bm \tau}
\, +\, 
\int_{\partial \Sigma} \bm T^{\scalebox{.7}{$\bm\sigma$}}_{\scalebox{.7}{$\bm\tau$}}\ \right]\, .
$$
Locality of $ \bm T^{\scalebox{.7}{$\bm\sigma$}}_{\scalebox{.7}{$\bm\tau$}}$ follows from the algorithm above.
\end{proof}

\begin{proof}[Proof of Corollary~\ref{MIN}]
Given only the data $\cc$ and the embedding $\widetilde \Sigma \hookrightarrow M$, we can 
find~$\bm \sigma=[g\, ;\, \sigma]$ 
such that the corresponding singular metric $g^o=g/\sigma^2$ obeys 
the singular Yamabe condition~\nn{singYam}.
This determines~$\bm \sigma$ up to order~$\bm \sigma^d$~\cite{ACF,CRMouncementCRM,GW15}. Moreover, given a conformally compact manifold with
whose boundary $\widetilde\Sigma$ contains
an embedded hypersurface $\partial \Sigma$, Graham and Witten have proved that  the minimal surface equation for the singular metric $g^o$ formally determines the asymptotics of a defining density $\bm \nu$ 
smoothly up to order~$\bm \sigma^{d-1}$~\cite{GrahamWitten} (see also~\cite{Gra00}). 
This defines a hypersurface $H$ with $\partial H=\partial\Sigma$. Hence we take the region $D_+$ with boundary~$H\cup \Sigma$. In fact, we only need this region in a   solid cylinder neighborhood over $\Sigma$. However, by inspection of the general anomaly Formula~\nn{A} (viewing~$\sigma$ as a coordinate on the solid cylinder), we see that the anomaly is determined by the defining densities $\bm\nu$ and $\bm\sigma$ up to order~$\bm \sigma^{d-1}$.
Thus indeterminancy in the solutions to the singular Yamabe and minimal surface problems do not contribute to the anomaly, so this is canonically determined by the data of the conformal embeddings. 

Naturalness of the  formulae\  for the pair $(\bm Q,\bm T)$ follows easily in view of the integration  by parts algorithm used to establish  Theorem~\ref{Q+T}, and
counting derivatives on the data~$\bm \sigma$ and $\bm \nu$ to  verify that this does not exceed the orders to which these are determined. 

To write natural formulae\  for $(\bm Q,\bm T)$ depending only on the embedding $\partial \Sigma\hookrightarrow \widetilde \Sigma$, one employs the same proof given above, but instead determines the asymptotics of the singular metric~$g^o$ by the Poincar\'e--Einstein condition whereby $g^o$ is an Einstein metric on $M^+$. Here the asymptotics are determined to one order lower~\cite{FGQ,Gra00}, but this still suffices to compute the anomaly.

The Poincar\'e--Einstein holographic formula for 
$Q$-curvature in~\cite{GW} establishes that~$\bm Q$
produces the Branson $Q$ curvature, while 
the analogous extrinsic result was given in~\cite{GWvol}.
\end{proof}

\section{Divergences}\label{Divergences}

The starting point to prove Theorem~\ref{divergences} is 
 the $\varepsilon$-expansion in Equation~\nn{epsilon_expansion}.
To compute the  leading order and nlo 
 divergences, we only need handle at most one derivative of Dirac delta distribution. This is treated in the following Lemma:

\begin{lemma}\label{firstboundarydivergence}
Let ~$d>2$,~${\widetilde\Sigma}$ be a hypersurface with
defining density~${\bm \sigma}$
 and~$ D$ be the region
 described in Theorem~\ref{expansion}. Then, 
 given a true scale~$\bm \tau$, 
$$
\int_{\,  D}
\frac{\bm \delta'}{\bm \tau^{d-2}}=
\frac1{d-2}
\int_{\Sigma}
 \frac1{\sqrt{\bm{\mathcal S}}}\,  \D \Big(\frac{1}{\bm {\mathcal S}  \bm \tau^{d-2} }\Big)
+
\int_{\partial \Sigma} \frac{\cos\theta }{\sqrt{
\bm{  \mathcal S}\hspace{.1mm}
\bm {\mathcal S}_{\scalebox{.7}{$\partial D$}}^{\phantom{.}} \,} \ \bm \tau^{d-2}}\, .
$$
where
$\bm {  \mathcal S}_{\scalebox{.7}{$\partial D$}}$
is the ~$\bm{\mathcal S}$-curvature of
the restriction of~$\bm \sigma$ to~$\partial  D$
and 
$\bm{ \hat n}_{\scalebox{.7}{$\partial D$}}$
is the outward conormal of~$\partial  D$.
\end{lemma}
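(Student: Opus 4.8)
The plan is to strip the single derivative off the Dirac delta by commuting the Laplace--Robin operator through the integral, and then to dispose of the resulting bulk and boundary pieces using the delta-integration identity~\nn{deltaintegral} on $\Sigma$ and on $\partial\Sigma\hookrightarrow\partial D$, respectively. Concretely, I would first invoke Proposition~\ref{delta_deriv} with $j=1$, which gives $\bm\delta'=\tfrac1{d-2}\,\I2^{-1}\D\,\bm\delta$; since $\I2^{-1}$ is multiplication by $\bm{\mathcal S}^{-1}$, the left-hand side becomes
\[\int_{D}\frac{\bm\delta'}{\bm\tau^{d-2}}=\frac1{d-2}\int_{D}\frac{1}{\bm{\mathcal S}\,\bm\tau^{d-2}}\,\D\bm\delta\,.\]
The weights line up for the next step: $\bm\delta$ has weight $w=-1$, so $\bm f:=\bm{\mathcal S}^{-1}\bm\tau^{2-d}$ has weight $1-d-w=2-d$, exactly the pairing demanded by the formal self-adjointness Theorem~\ref{parts}.

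Applying Theorem~\ref{parts} with $\bm g=\bm\delta$ and this $\bm f$ moves $\D$ off the delta at the cost of a divergence term, $\int_D \bm f\,\D\bm\delta=\int_D(\D\bm f)\,\bm\delta-\int_D\divergence\bm j$. For the bulk term I would rewrite $(\D\bm f)\,\bm\delta=\bm\delta\,\sqrt{\bm{\mathcal S}}\,(\bm{\mathcal S}^{-1/2}\D\bm f)$ and feed it into identity~\nn{deltaintegral}, producing $\tfrac1{d-2}\int_\Sigma \tfrac1{\sqrt{\bm{\mathcal S}}}\,\D(\bm{\mathcal S}^{-1}\bm\tau^{2-d})$, which is precisely the first claimed term. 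For the boundary contribution, the divergence theorem on $D$ (exactly as applied in the proof of Lemma~\ref{lastderiv}) converts $\int_D\divergence\bm j$ into $\int_{\partial D}\bm{\hat n}_{\partial D}\cdot\bm j$; the crucial simplification here is distributional, since with $\bm g=\bm\delta=[g;\delta(\sigma)]$ the identities $\sigma\,\delta(\sigma)=0$ and $\sigma\,\delta'(\sigma)=-\delta(\sigma)$ collapse the $\sigma$-weighted pieces of $\bm j$, leaving $\bm j_a=-(d-2)\,\bm f\,\bm\delta\,\nabla_a\sigma$.

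The last ingredient, and the step I expect to be most delicate, is the geometric evaluation of the contracted conormal together with the change of measure on $\partial D$. Along $\partial\Sigma$ one has $n\cdot\bm{\hat n}_{\partial D}=|n|_g\,(\bm{\hat n}_\Sigma\cdot\bm{\hat n}_{\partial D})=\sqrt{\bm{\mathcal S}}\,\cos\theta$, using $\bm{\mathcal S}=|n|_g^2$ on $\widetilde\Sigma$ and the conformal definition $\cos\theta=\bm{\hat n}_\Sigma\cdot\bm{\hat n}_{\partial D}$; this turns the boundary integrand into $-(d-2)\,\cos\theta\,\bm{\mathcal S}^{-1/2}\bm\tau^{2-d}\,\bm\delta$. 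Then I would apply~\nn{deltaintegral} in its form on the closed conformal manifold $\partial D$, noting that $\bm\sigma|_{\partial D}$ is a defining density for $\partial\Sigma\hookrightarrow\partial D$ with associated curvature $\bm{\mathcal S}_{\partial D}$; the attendant factor $\sqrt{\bm{\mathcal S}_{\partial D}}$ supplies the second denominator and gives $-(d-2)\int_{\partial\Sigma}\cos\theta/(\sqrt{\bm{\mathcal S}\,\bm{\mathcal S}_{\partial D}}\,\bm\tau^{d-2})$. Dividing by $(d-2)$ and restoring the overall sign from $-\int_D\divergence\bm j$ reproduces the stated boundary term. The main obstacles are therefore bookkeeping in nature: verifying the weight assignments so that every integrand is genuinely a weight $-d$ (resp.\ weight $1-d$) density at the points where~\nn{deltaintegral} is invoked, and confirming that the two surviving pieces of $\bm j$ combine with the correct coefficient $-(d-2)$ after the distributional identities are used.
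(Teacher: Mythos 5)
Your proposal is correct and follows essentially the same route as the paper's proof: Proposition~\ref{delta_deriv} with $j=1$, the formal self-adjointness of $\D$ (Theorem~\ref{parts}) with the current $\bm j$ collapsing to $-(d-2)\,\bm f\,\bm\delta\,\nabla\sigma$ via $\sigma\delta(\sigma)=0$ and $\sigma\delta'(\sigma)=-\delta(\sigma)$, the divergence theorem, the identification $n\cdot\bm{\hat n}_{\partial D}=\sqrt{\bm{\mathcal S}}\cos\theta$ along $\widetilde\Sigma$, and Equation~\nn{deltaintegral} applied on $\Sigma$ and on $\partial\Sigma\hookrightarrow\partial D$. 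The only cosmetic difference is that the paper fixes $\bm\tau=[g\,;\,1]$ while you carry $\bm\tau^{2-d}$ explicitly; your weight bookkeeping is consistent throughout.
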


\begin{proof}
Proposition~\ref{delta_deriv} and the leading term of Theorem~\ref{parts} 
directly imply the first term on the right hand side of the equality displayed above, so we only need focus on the boundary term which, according to Theorem~\ref{parts}, is given by~$-\tfrac1{d-2}\int_{ D} {\mu^g }\, \nabla_a j^a$ 
where (using $\bm \tau=[g\, ;\, 1]$ and $\bm {\mathcal S}=[g\, ;\, {\mathcal S}]$) 
$$j^a=\sigma \big({\mathcal S}^{-1}\nabla^a \delta(\sigma)-(\nabla^a {\mathcal S}^{-1})\delta(\sigma)\big)
-(d-3) n^a {\mathcal S}^{-1} \delta(\sigma)
=-(d-2) n^a {\mathcal S}^{-1} \delta(\sigma)\, .
$$
Thus, via the divergence theorem, the boundary term becomes
$\int_{\scalebox{.7}{$\partial D$}} \ext \! A\,  \hat n_{\scalebox{.7}{$\partial D$}}^a \, n_a {\mathcal S}^{-1} \delta(\sigma)$. 
We may then use Equation~\nn{deltaintegral} to write this as the quoted integral over~$\partial \Sigma$. 
\end{proof}

\begin{proof}[Proof of Theorem~\ref{divergences}]
Apply Equation~\nn{deltaintegral}
to the first term the expansion stated in Equation~\nn{epsilon_expansion} and the above Lemma to the second term thereof. 
\end{proof}

\section{Surface anomaly}
\label{Anomaly}

Before proving Theorem~\ref{anomaly}, we  need a technical Lemma.

\begin{lemma}\label{tl}
Let~$L$ be a closed 1-manifold embedded in a closed conformal 2-manifold~$(N,\bm c)$ with defining density~$\bm \sigma$. Moreover let~$\bm f$ be a weight zero density. Then
$$
\int_N \bm f \hh \bm \delta' = - \int_L {\bm \nabla}_{\!\bm {\hat n}} \big(\bm f/\bm {\mathcal S})\, ,
$$
where $\bm{\hat n}$ is the unit conormal to $L$ determined by $\bm \sigma$ and $\bm {\mathcal S}$ is the~${\mathcal S}$-curvature of~$\bm \sigma$.
\end{lemma}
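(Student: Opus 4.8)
The plan is to prove this directly in the surface case rather than extracting it from the general closed-manifold identity of Proposition~\ref{trafoform}; a direct argument never introduces the auxiliary true scale $\bm\tau$, which matches the fact that the claimed right-hand side is manifestly $\bm\tau$-independent. Since both sides are weight $-1$ conformal densities integrated over the closed $1$-manifold $L$, it suffices to verify the identity in a single representative scale $g$. So I would fix $g$ and write $\bm f=[g\,;\,f]$, $\bm\sigma=[g\,;\,\sigma]$, $n_a:=\nabla_a\sigma$, recalling from \nn{scurvy} that $\bm{\mathcal S}=[g\,;\,|n|_g^2+2\rho\sigma]$ and $\rho=-\tfrac1d(\nabla\cdot n+\J^g\sigma)$, which at $d=2$ reads $\rho=-\tfrac12(\nabla\cdot n+\J^g\sigma)$.

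First I would use the distributional chain rule $\nabla_a\delta(\sigma)=n_a\,\delta'(\sigma)$. Because the $\mathcal S$-curvature is assumed nowhere vanishing and $\bm{\mathcal S}$ reduces to $|n|_g^2$ along $L$, the length $|n|_g^2$ is nonzero in a neighborhood of $L$, so I may invert this to $\delta'(\sigma)=|n|_g^{-2}\,n^a\nabla_a\delta(\sigma)$ as distributions supported near $L$. Integrating by parts on the closed manifold $N$, where no boundary terms arise, moves the derivative off the delta:
\[
\int_N \bm f\,\bm\delta'=-\int_N \mu^g\,\nabla_a\!\Big(\tfrac{f\,n^a}{|n|_g^2}\Big)\,\delta(\sigma)\, .
\]
Applying the undifferentiated delta-integration identity \nn{deltaintegral}, with its measure factor $\sqrt{\bm{\mathcal S}}$, then converts this to the integral over $L$ of $-\tfrac{1}{\sqrt{\mathcal S}}\,\nabla_a(f\,n^a/|n|_g^2)$ restricted to $L$.

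The remaining and genuinely computational step, which is the main obstacle, is to show that this integrand restricts on $L$ to the claimed normal derivative $\bm\nabla_{\bm{\hat n}}(\bm f/\bm{\mathcal S})$, whose scale-$g$ form is $|n|_g^{-1}\,n^a\nabla_a(f/\mathcal S)$. Expanding both expressions with the Leibniz rule and using $\nabla_a|n|_g^2=2\,n^b\nabla_a\nabla_b\sigma$ (the Hessian of $\sigma$ being symmetric), one finds that, on $L$ (where $\sqrt{\mathcal S}=|n|_g$ and the $\sigma$-proportional terms of $\nabla\mathcal S$ drop out), the two integrands differ only by $f\,(\nabla\cdot n+2\rho)/|n|_g^3$. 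The decisive cancellation is $\nabla\cdot n+2\rho=-\J^g\sigma$, immediate from the $d=2$ formula for $\rho$ above, which vanishes identically on $L$ since $\sigma=0$ there. Hence the two integrands agree upon restriction and the identity follows. I would close by remarking that this is precisely the $d=2$ specialization of Proposition~\ref{trafoform}, in which $(\D\circ\I2^{-1})^{d-2}=\mathrm{id}$ and $(d-2)!=1$, with the $\bm\tau$-dependent terms there collapsing into the single normal-derivative term obtained above.
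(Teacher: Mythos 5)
Your argument is correct and follows essentially the same route as the paper's proof: write $\bm\delta'=[g\,;\,\nabla_n\delta(\sigma)/n^2]$, integrate by parts on the closed manifold $N$, and use the two-dimensional identity $-2\rho=\nabla\cdot n+\sigma\J$ together with Equation~\nn{deltaintegral} to land on $-\int_L\bm\nabla_{\bm{\hat n}}(\bm f/\bm{\mathcal S})$. The only (immaterial) difference is that you apply the delta-integration identity before carrying out the $\nabla\cdot n+2\rho$ cancellation on $L$, whereas the paper performs that cancellation under the integral sign against $\delta(\sigma)$ and invokes \nn{deltaintegral} last.
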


\begin{proof}
We pick a scale~$g\in \bm c$ then~$\bm \sigma=[\hh g\, ;\, \sigma]$ and note that
$\bm \delta'=\big[\hh g\, ; \big(\nabla_n \delta(\sigma)\big)/n^2\big]$. Integrating by parts (and remembering that~${\mathcal S}=n^2+2\rho\sigma$), we have that
\begin{equation*}
\begin{split}
\int_N \bm f \bm \delta' &=
-\!\int_N \mu^g\,  \delta(\sigma)\, \nabla^a\Big(
\frac{n_a f}{{\mathcal S}-2\rho\sigma}
\Big)
=
-\!\int_N \mu^g \, \delta(\sigma) \Big(\!
 \frac{f\nabla.n +\nabla_n f }{{\mathcal S}}
-\frac{f\nabla_n{\mathcal S}-2\rho n^2 f }{{\mathcal S}^2}
\Big)\\[1mm]
&=-\!\int_N \mu^g\, \delta(\sigma)\, \nabla_n\big(f/{\mathcal S}\big)\, .
\end{split}
\end{equation*}
The last line used that in two dimensions~$-2\rho=\nabla.n+\sigma \J$.
An application of Equations~\nn{conformalgradient} and~\nn{deltaintegral} gives the quoted result.
\end{proof}

\begin{remark}
Specializing to the singular Yamabe setting for which~$\bm \sigma$ is a conformal unit defining density, and setting~$\bm f=1$, the above Lemma then implies  vanishing   anomaly for a line embedded in a 2-manifold.
\end{remark}

\medskip

\begin{proof}[Proof of Theorem~\ref{anomaly}]

By virtue of Equation~\nn{A} we are tasked with computing $\int_{ D}\bm \delta''$. 
We begin by employing Lemma~\ref{lastderiv}. The boundary term given there can be re-expressed as
$$\int_{\scalebox{.8}{$\!\partial D$}} \frac{\bm{\hat n}_{\scalebox{.8}{$\partial D$}} \, .\, (\bm \tau^{-1} 
 \triangledown^{\scalebox{.7}{$\bm \tau$}}
 \bm \sigma)}{\bm {\mathcal S}}\, \bm \delta'
=-\int_{\partial\Sigma}\bm{\hat n}_{\partial\Sigma|\partial  D}^a\bm {\nabla}_a^{\partial D}
\Big(\frac{
\bm{ \hat n}_{\scalebox{.7}{$\partial D$}}  \, .\, 
(\bm \tau^{-1} 
 \triangledown^{\scalebox{.7}{$\bm \tau$}}
 \bm \sigma)
}{\bm {\mathcal S}\hspace{.1mm}\bm {\mathcal S}_{\partial D}}\Big)\, ,
$$
using Lemma~\nn{tl}. Thus we have so far established that
$$
\int_{\,  D}\bm{\delta}''=
-\int_{\, D}
\bm{\delta}'
\big(\I2^{-1}\circ \D  -
(\bm \nabla_a {\mathcal S}^{-1}) \, {\bm g}^{ab}
  \triangledown^{\scalebox{.7}{$\bm\sigma$}}_b
 \big)  \log \bm \tau
-
\int_{\partial\Sigma}\bm{\hat n}_{\partial\Sigma|\partial  D}^a\bm { \nabla}_a^{\partial D}
\Big(\frac{
\bm{ \hat n}_{\scalebox{.7}{$\partial D$}} \, .\, 
(\bm \tau^{-1} 
 \triangledown^{\scalebox{.7}{$\bm \tau$}}
 \bm \sigma)}{\bm {\mathcal S}\hspace{.1mm}\bm {\mathcal S}_{\partial D}}\Big)\, .
$$
It only remains to apply Proposition~\ref{delta_deriv} to the differentiated delta-density~$\bm \delta'$ in the first term on the right hand side and then integrate the resulting Laplace--Robin operator by parts
according to Theorem~\ref{parts} while keeping track of the boundary term, this yields
\begin{multline*}\!\!\!\!\!
\int_{\,  D}\bm{\delta}''
=-\, 
\int_{{\widetilde\Sigma}} \frac1{\sqrt{\bm {\mathcal S}}} \, 
\D\circ\I2^{-1}\circ\big(\I2^{-1}\circ \D  -
(\bm \nabla_a \bm {\mathcal S}^{-1}) \, {\bm g}^{ab}
  \triangledown^{\scalebox{.7}{$\bm\sigma$}}_b
 \big)  \log \bm \tau
 \\[2mm]\qquad\quad\!\!\!\!\!\!\!\!\!\!
-\int_{\partial\Sigma} \Big[\, 
\frac{\bm {\hat n}_\Sigma\, .\, \bm{ \hat n}_{\scalebox{.7}{$\partial D$}} }
{\sqrt{
\scalebox{1}{$ \bm{  \mathcal S}\hspace{.1mm}
\bm {\mathcal S}_{\scalebox{.7}{$\!\partial D$}}^{\phantom{a}}
$}
}}
 \ 
\big(\I2^{-1}\!\circ \D  -
(\bm \nabla_a \bm {\mathcal S}^{-1}) \, {\bm g}^{ab}
  \triangledown^{\scalebox{.7}{$\bm\sigma$}}_b
 \big)  \log \bm \tau
 \ + \ 
 \bm{\hat n}_{\partial\Sigma|\partial  D}^a\bm {\nabla}^{\partial D}_a
\Big(\frac{\bm{\hat n}_{\scalebox{.7}{$\partial D$}} \, .\, 
(\bm \tau^{-1} 
 \triangledown^{\scalebox{.7}{$\bm \tau$}}
 \bm \sigma)}{\bm {\mathcal S}\hspace{.1mm}\bm {\mathcal S}_{\partial D}}\Big)\Big]\, .
\end{multline*}
Remembering that $\bm {\hat n}_\Sigma\, .\, \bm{ \hat n}_{\scalebox{.7}{$\partial D$}} =\cos \theta$ completes the proof.
\end{proof}

\addtocontents{toc}{\SkipTocEntry}
\subsection{Example}\label{EX1}

Consider a surface of revolution~${\widetilde\Sigma}$ in Euclidean 3-space, with metric 
$$
g=\ext\!x^2+\ext\! r^2+r^2\ext\!\theta^2\, .
$$
Moreover, suppose ${\widetilde\Sigma}$ is given by the graph of a function~$f(r)$ so that
$
{\widetilde\Sigma}={\mathcal Z}(\sigma)$, where
$$\sigma=x-f(r)\, .
$$
We require that~$f'(0)=0$ in order that~$\sigma$ is smooth.
For simplicity let us take~${D_+}$ to be the 
cylindrical coordinate region
$$
{D_+}=\{(x,r,\theta)\, |\, x\geq f(r)\, ,\: r\leq R\}
$$ and $ D=\{(x,r,\theta)\, |\,  r\leq R\}$.
We shall try to compute the regulated volume of~${D_+}$ with respect to the defining density
$
\bm \sigma = [\hh g\, ;\, \sigma]
$ and
 the regulator~$\bm \tau=[\hh g\, ;\, 1]$. Note that~$\Sigma=\{(f(r),r,\theta)\, |\, 0\leq r\leq R, 0\leq\theta<2 \pi\}$. This geometry is depicted below:
 
 \begin{center}
 \includegraphics[scale=.26]{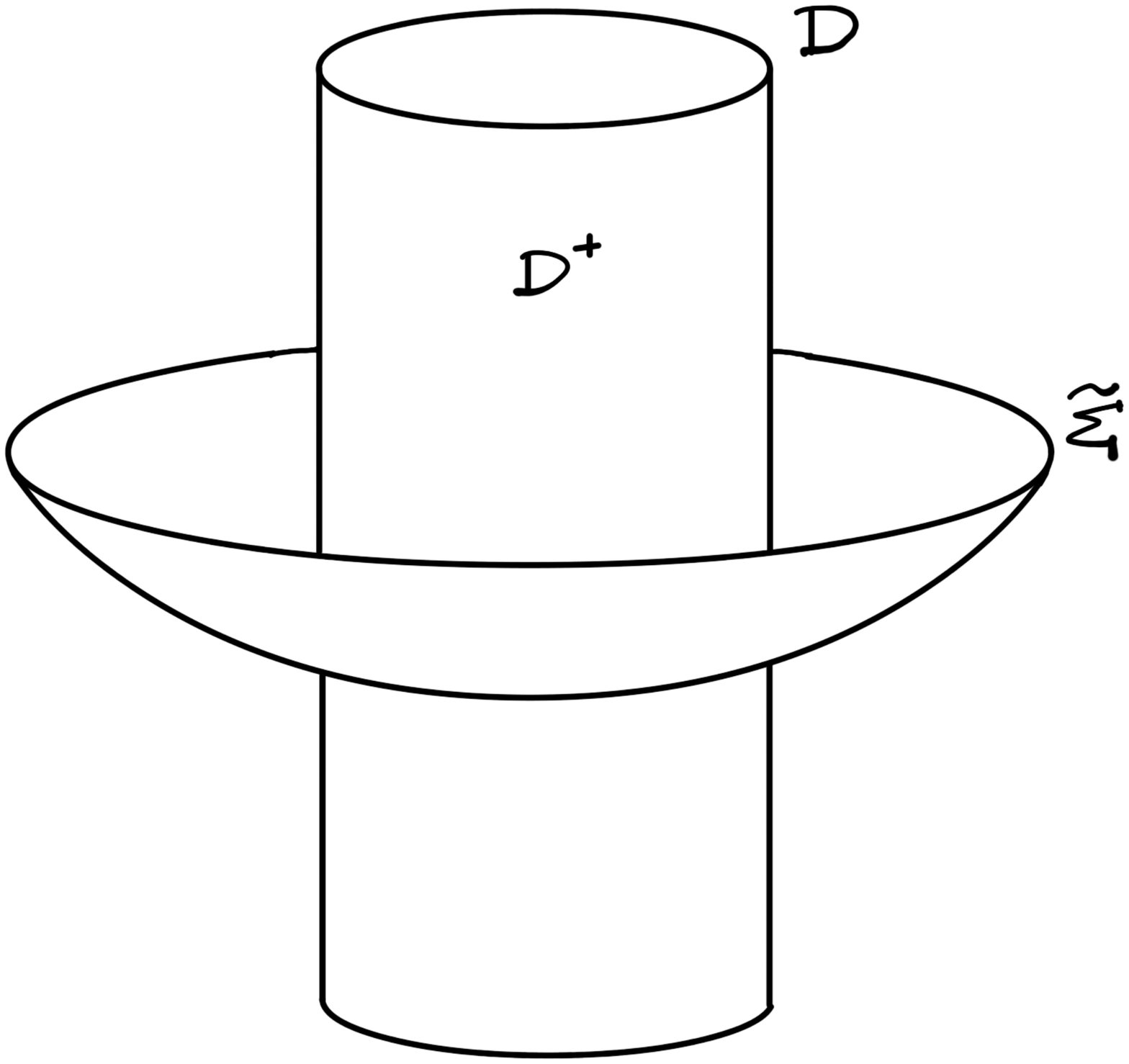}
 \end{center}
 
In fact, in this simple setting we can easily directly evaluate the regulated volume
\begin{equation}\label{VE}
\Vol_\varepsilon=\int_0^{2\pi}
\ext\! \theta
\int_0^R r \ext\! r
\int_{f(r)+\varepsilon}^\infty \frac{\ext\! x}{\big(x-f(\mh r)\big)^3}=\frac{\pi R^2}{2\varepsilon^2}\, .
\end{equation}
Hence both the anomaly and renormalized volume vanish, as does the next to leading (nlo) divergence for this particular regulator.
Indeed, it is not difficult to prove
the same statement for the above problem generalized to surfaces given by the graph of an arbitrary function.
As we shall see, this example provides a nice test
of our result for the leading divergences in~\nn{boundaryleading}
as well as the formula for the surface anomaly~\nn{surfaceanomaly}. 

\medskip

Let us now recalculate the divergences and anomaly. First  note that~$\bm {\mathcal S}=\big[\hh g\, ;\,  1+|\!\ext\! f|^2 +{\mathcal O}(\sigma)\big]$ and  the area element for~${\widetilde\Sigma}$ is ~$\ext \! A=\sqrt{1+|\! \ext\! f|^2}\, r\ext\! r \ext\! \theta$.
Thus~$\int_{\Sigma}
\tfrac1{\scalebox{.7}{$\sqrt{\bm {\mathcal S}}\bm \tau^2$}}
=\int_0^{2\pi}\ext\! \theta \int_0^ R r\ext\! r = \pi  R^2$, so the leading divergence in Equation~\nn{boundaryleading} exactly reproduces~$\Vol_\varepsilon$ in~\nn{VE}. It remains to show that the nlo divergence and anomaly vanish. For the former, according to Equation~\nn{boundaryleading}, we must compute an integral along~$\Sigma$ as well as one along its boundary~$\partial\Sigma$. For the first integrand we compute
$$
\D \Big(\frac1{\bm{\mathcal S}\bm \tau}\Big)
\stackrel{\widetilde\Sigma}=\big[\hh g\hh ;
-(\nabla_n-\rho) {\mathcal S}^{-1}
\big]
\stackrel{\widetilde\Sigma}=\Big[\hh g\hh ;  \frac
{\nabla_n n^2 + 3\rho n^2}
{|n|^4}
\Big]
=\Big[\hh g\hh ; 
\frac{
f''-f'^2f''+\frac{f'}r(1+f'^2)}{(1+f'^2)^2}\Big]\, .
$$
To obtain the displayed result we used that~$\rho=\tfrac13\big(f''+\tfrac 1r f'\big)$,~$n^2=1+f'^2$
and~$\nabla_n n^2=-2f'^2 f''$.
Using the above it follows that
\begin{eqnarray*}
\int_{\Sigma}
 \frac1{\sqrt{\bm{\mathcal S}}}\,  \D \Big(\frac{1}{\bm {\mathcal S}  \bm \tau^{d-2} }\Big)&=&
 \int_0^{2\pi} \!\!\ext\! \theta\int_0^ R r\ext\! r\,  \frac{
f''-f'^2f''+\frac{f'}r(1+f'^2)}{(1+f'^2)^2}\\
&=&
2\pi \int_0^ R \ext\! \Big(
\frac{rf'}{1+f'^2}\Big)=\frac{2\pi R f'( R)}{1+f'( R)^2}\, .
\end{eqnarray*}
This is precisely canceled by the boundary integral because
$$\cos\theta = \bm {\hat n}_\Sigma\, .\, \bm{ \hat n}_{\scalebox{.7}{$\partial D$}} = \left[\hh g\, ;\, -f'/\sqrt{1+f'^2}\right]$$
and
$
\sqrt{
\bm{  \mathcal S}\hspace{.1mm}
\bm {\mathcal S}_{\scalebox{.7}{$\partial D$}}^{\phantom{.}} \,} \ \bm \tau^{d-2}=\left[g\, ;\, 
\sqrt{1+f'^2}
\right]$, while the volume element along~$\partial \Sigma$ is~$ R \ext\! \theta$.

To show that the anomaly vanishes we first compute the bulk contribution to Equation~\nn{surfaceanomaly}
\begin{equation*}
\begin{split}
{\bm Q}^{\scalebox{.7}{$\bm\sigma$}}
&\stackrel{\widetilde\Sigma}=\Big[ g\, ;\, 
-\frac1{|n|^{3}} \big(\nabla_n-3\rho-(\nabla_n\log n^2)\big)\circ (\rho+\nabla_n)\Big(\frac{1}{n^2+2\rho\sigma}\Big)\Big]
\\[1mm]
&\stackrel{\widetilde\Sigma}=\Big[g\, ;\, 
-
\frac{
(1-f'^4)f'(f'''+\tfrac3r f'')+
(1-8f'^2+3f'^4) f''^2
}{(1+f'^2)^{9/2}}
\Big]
\\[1mm]
&=
\Big[g\, ;\,- 
\frac1{r\sqrt{1+f'^2}}\, 
\frac{\ext}{\ext\! r}\Big(
\frac{
f'(f'^3+f'-rf'^2f''+rf'')
}{(1+f'^2)^3}
\Big)
\Big]
\, .
\end{split}
\end{equation*}
Above, in addition to  already gathered data, we used that~$\nabla_n\rho=
-\tfrac13f'(f'''+\tfrac1r f''-\tfrac1{r^2}f')$
and~$\nabla_n^2 n^2 =2f'^2(f'f'''+2f''^2)$. 
Hence the bulk contribution to the anomaly is
$$
\int_{\Sigma}
\, 
 {\bm Q}^{\scalebox{.7}{$\bm\sigma$}}
= -
\frac{\pi
f'( \mh R)\big(f'(\mh R)^3+f'(\mh R)-Rf'(\mh R)^2f''(\mh R)+Rf''(\mh R)
\big)}{(1+f'(\mh R)^2)^3}\, .
$$
To compute the boundary contribution to the anomaly we first note that
$$
{\mathcal L}\,   \log \bm \tau
 \stackrel{\partial\Sigma}=
\Big[g\, ;\,
-\frac{f'(\mh R)^3+f'(\mh R)-5Rf'(\mh R)^2f''(\mh R)+Rf''(\mh R)}{3R(1+f'(\mh R)^2)^2} 
\Big]\, . 
$$
Also~$\bm{\hat n}_{\scalebox{.8}{$\partial D$}} \, .\, \bm n^{\scalebox{.7}{${\bm\tau}$}}
 =[g\, ;\, -f']$ 
so that
\begin{multline*}
 \bm{\hat n}_{\partial\Sigma|\partial  D}^a\bm { \nabla}^{\partial D}_a
\Big(\frac{\bm{\hat n}_{\scalebox{.8}{$\partial D$}} \, .\, \bm n^{\scalebox{.7}{${\bm\tau}$}}
}{\bm {\mathcal S}\hspace{.1mm}\bm {\mathcal S}_{\partial D}}\Big)
\\
 =\Big[g\, ;\, 
\frac{\partial}{\partial x}\Big(\frac{-f'}{1+f'^2+\tfrac23(x-f)(f''+\tfrac1r f')}\Big)\Big]
\stackrel{\partial\Sigma}=\Big[g\, ;\, 
\frac{2f'(\mh R)(Rf''(\mh R)+f'(\mh R))}{3R(1+f'(\mh R)^2)^2}\Big]\, .
\end{multline*}
Noting that
$\cos\theta
\big/\!
\scalebox{.8}{$
\sqrt{
\bm{  \mathcal S}\hspace{.1mm}
\bm {\mathcal S}_{\scalebox{.7}{$\partial D$}}^{\phantom{.}} \,} 
$}
=\left[\hh g\, ;\, -f'/(1+f'^2)\right]$ we find
for the boundary  contribution
to the anomaly~$$
\int_{\partial\Sigma} \, 
 {\bm T}^{\scalebox{.7}{$\bm\sigma$}}
= 
\frac{\pi
f'( \mh R)\big(f'(\mh R)^3+f'(\mh R)-Rf'(\mh R)^2f''(\mh R)+Rf''(\mh R)
\big)}{(1+f'(\mh R)^2)^3}\, .
$$
This exactly cancels the result for
 $\int_{\Sigma}
\, 
 {\bm Q}^{\scalebox{.7}{$\bm\sigma$}}$ found above.
Hence, we find that the anomaly~${\mathcal A}=0$
in agreement with our exact computation of the regulated volume.

\section{Singular Yamabe surface anomaly}
\label{Yamabe}

We now consider the case where the singular metric $g^o$  obeys the singular
 Yamabe condition~\nn{singYam}.
 In fact, via the uniqueness results of~\cite{ACF,GW15}, this implies that the anomaly is completely determined by the conformal embedding of ${\widetilde\Sigma}\hookrightarrow M$. Remembering that the singular Yamabe condition can be 
 reformulated as a unit condition for the ${\mathcal S}
$-curvature as in Equation~\nn{one}. Thus, the result of Theorem~\ref{anomaly} for the surface anomaly simplifies to 
$$
{\mathcal A}=\frac12\int_{\Sigma}\D^2 \log \bm \tau 
+ \frac 12 \int_{\partial\Sigma}
\, {\bm T}^{\scalebox{.7}{$\bm\sigma$}}\, ,
$$
where
\begin{equation}\label{J}
{\bm T}^{\scalebox{.7}{$\bm\sigma$}}=
 \left(
\frac{\cos\theta }{\sqrt{
\scalebox{.8}{$\bm {\mathcal S}_{\scalebox{.7}{$\partial D$}}^{\phantom{.^.}}~$}} } \ 
 \D   \log \bm \tau
\, + \, 
 \bm{\hat n}_{\partial\Sigma|\partial  D}^a\bm { \nabla}^{\partial D}_a
\Big(\frac{\bm{\hat n}_{\scalebox{.8}{$\partial D$}} \, .\, 
 \bm n^{\scalebox{.7}{${\bm\tau}$}}
}{\bm {\mathcal S}_{\partial D}}\Big)
\right)\, .
\end{equation}
The integrand of the bulk $\int_{\Sigma}$ term is the extrinsic~$Q$-curvature of~\cite{GW15}, and has been computed in~\cite{GW161}. This yields for the bulk contribution 
$
\tfrac12\int_{\Sigma}\big(\,\,  \bar{\!\!\J}\, -\tfrac12\,  \bm\IIo_{ab}^\Sigma\,  \bm \IIo_\Sigma^{ab} \big)
$, where $\, \bar{\!\!\J}=\Sc^{\bar g}/2$. 
Hence, 
$$
\bm Q^{\scalebox{.7}{$\bm\sigma$}}_{\scalebox{.7}{$\bm\tau\!=\![g ; 1]$}}=\bar{\!\!\J}\, -\tfrac12\,  \bm\IIo_{ab}^\Sigma\,  \bm \IIo_\Sigma^{ab}\, .
$$
Thus it only remains to compute the weight~$w=-1$ density~${\bm T}^{\scalebox{.7}{$\bm\sigma$}}$ above to complete the proof of Theorem~\ref{singanomaly}.

\medskip

Before proving Theorem~\ref{singanomaly}, however,  we need to gather together 
some data from the geometry of intersecting surfaces.
Let us consider a pair of smooth surfaces~${\widetilde\Sigma}$ and~$\Lambda$  embedded in a Riemannian three-manifold~$(M,g)$ as depicted below:

\begin{center}
\includegraphics[scale=.3]{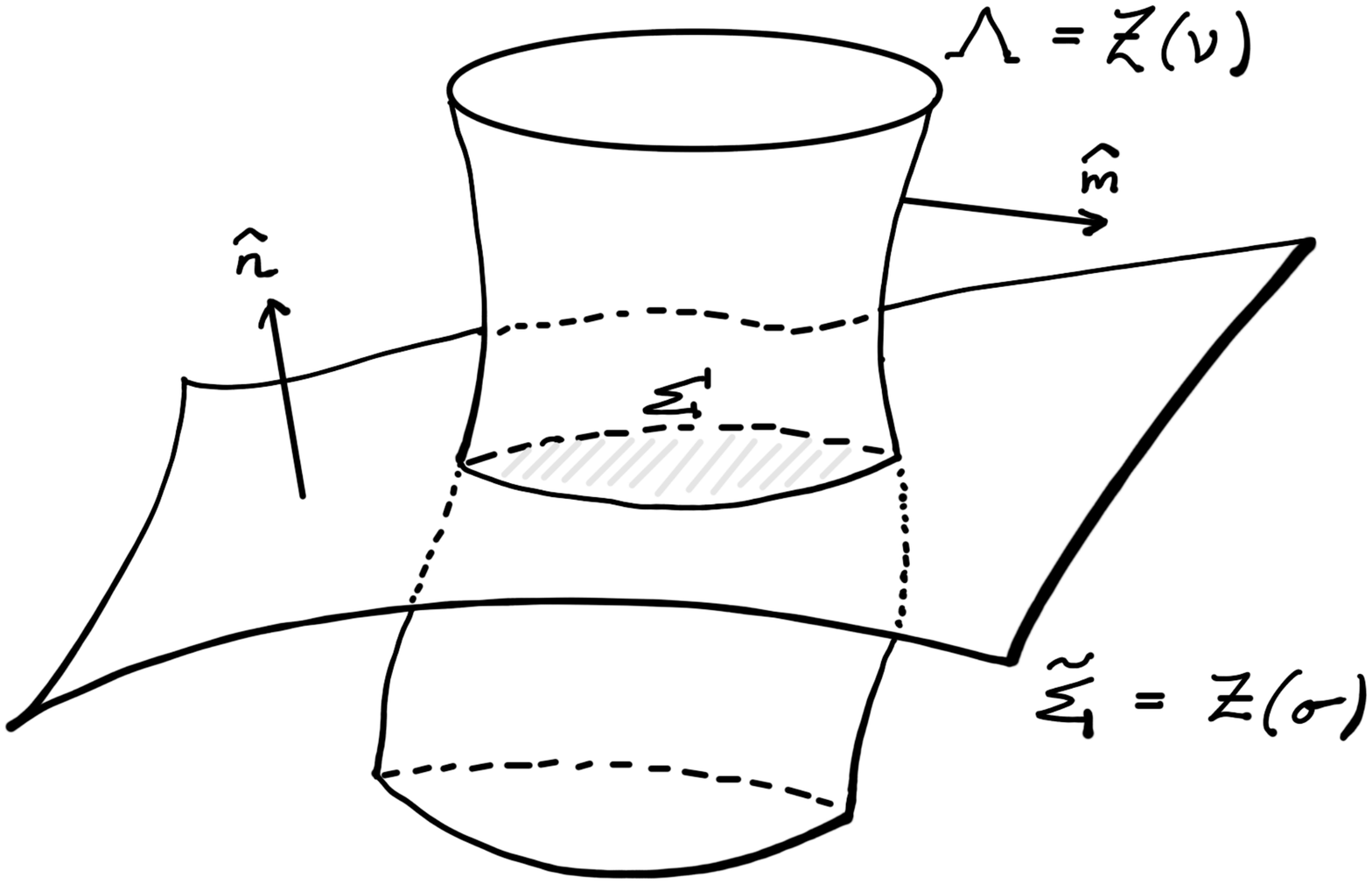}
\end{center}

Here we have depicted a solid cylinder topology of the type assumed by Theorem~\ref{expansion} to hold
in a neighborhood of~$\Sigma$. 
In the context of Theorem~\ref{singanomaly}, the surface~$\Lambda$ plays the role of $\partial  D$ in that neighborhood.
Also let us assume that their intersection ${\widetilde\Sigma}\cap \Lambda$ is the boundary of some  region $\Sigma \subset {\widetilde\Sigma}$.
In what follows, we assume that these surfaces intersect at a non-zero angle.

Moreover it will be useful to suppose~${\widetilde\Sigma}={\mathcal Z}(\sigma)$ and~$\Lambda={\mathcal Z}(\nu)$ where~$\sigma$ and~$\nu$ are respective defining functions. Also,  the following covectors
\begin{equation}\label{pres}
\hat n:=\frac{\nabla \sigma}{|\nabla\sigma|}\, ,\qquad
\hat m:=\frac{\nabla \nu}{|\nabla\nu|}\, ,
\end{equation}
are preinvariants (in the sense of~\cite{GW15}) for the respective unit conormals. ({\it I.e.}, 
$\hat n|_{\widetilde \Sigma}$ 
and 
$\hat m|_\Lambda$ 
are conormals to ${\widetilde\Sigma}$ and $\Lambda$, respectively, and this holds for {\it any} choice of defining function $\sigma$ or $\lambda$ for each of the two surfaces).
We  will use the notation~$n=\nabla \sigma$ and~$m=\nabla\nu$
and as usual adorn various quantities with a~${\widetilde\Sigma}$ or~$\Lambda$ to indicate which hypersurface they correspond to.
In addition, since we are interested in the singular Yamabe setting, we assume that~$[g\, ;\, \sigma]$ is a conformal unit defining density so that
$$
|n|^2=1-2\rho \sigma +{\mathcal O}(\sigma^3)\, ,
$$ 
where~$\rho:=-\tfrac 13 (\Delta \sigma + \J\,  \sigma)$ and~$-\rho|_\Sigma=H_\Sigma$ where~$H_\bullet$ denotes mean curvature, and $\J=\tfrac14\Sc^g$.
Without loss of generality (again see~\cite{GW15}), we may take~$\nu$ to be a {\it unit defining function} meaning that $|\nabla \nu|=1$.

The Euler characteristic~$\chi_{\Sigma}$ of the surface~$\Sigma$ is given, according to the Chern--Gau\ss--Bonnet theorem, by
\begin{equation}\label{GB}
2\pi \chi_{\Sigma} = \int_{\Sigma} \J_\Sigma 
+ \int_{\partial\Sigma} \kappa\, ,
\end{equation}
where~$\kappa$ is the geodesic curvature of~$\partial \Sigma$, or in other words, the mean curvature of the curve~$\partial \Sigma$ viewed as an embedding in~${\widetilde\Sigma}$ (with metric induced by~$g$). Thus
\begin{equation}\label{kappa}
\kappa=\nabla_\Sigma^a \, \hat p_a
|_{\partial\Sigma}\, ,
\end{equation}
where 
$$\hat p=\frac{\hat m -  \cos\theta\, \hat n}{\sin\theta}$$ is the unit normal to~$\partial \Sigma\hookrightarrow {\widetilde\Sigma}$ and
 the conformal hypersurface invariant~$\theta$ of~$\partial \Sigma$ is defined by the preinvariant 
$$
\hat n.
\hat m :=\cos\theta\, .
$$
It will also be useful to introduce the unit normal of~$\partial \Sigma\hookrightarrow \Lambda$, this is given by
$$
\hat q = \frac{\hat n - \cos\theta\, \hat m}{\sin\theta}\, .
$$
The preinvariants of Equation~\nn{pres} will be used to define~$\hat n$,~$\hat m$,~$\hat q$,~$\hat p$ and~$\theta$ in the bulk, and we will often use this notation without comment below.

We first need the following technical results:
\begin{lemma}\label{first}
Let~$\cos\theta=(\nabla \sigma).(\nabla \nu)/(|\nabla\sigma| |\nabla \nu|)$. Then 
$$
\nabla_{\hat m} \cos \theta\stackrel{\widetilde\Sigma}=\IIo^\Sigma_{\hat m\hat m}
+\sin^2\theta\, H_\Sigma
\, ,\qquad
\nabla_{\hat n} \cos \theta\stackrel\Lambda=\IIo^\Lambda_{\hat n\hat n}+\sin^2\theta\, H_\Lambda\, .
$$
\end{lemma}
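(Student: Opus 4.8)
The plan is to compute both derivatives from $\cos\theta=\hat n\cdot\hat m$, writing $n:=\nabla\sigma$, $m:=\nabla\nu$, $\hat n=n/|n|$, $\hat m=m/|m|$, and applying the Leibniz rule while exploiting that $\hat n$ and $\hat m$ are \emph{unit} vector fields in a neighbourhood of the two surfaces (so that $\hat n\cdot\nabla_X\hat n=0=\hat m\cdot\nabla_X\hat m$), and that the decompositions $\hat m=\cos\theta\,\hat n+\sin\theta\,\hat p$ and $\hat n=\cos\theta\,\hat m+\sin\theta\,\hat q$ hold in the bulk via the preinvariants of Equation~\nn{pres}. For the first identity I would expand $\nabla_{\hat m}\cos\theta=(\nabla_{\hat m}\hat n)\cdot\hat m+\hat n\cdot(\nabla_{\hat m}\hat m)$. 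Because $\nu$ is a unit defining function, its gradient flow is geodesic: $\nabla_{\hat m}\hat m=m^b\nabla_b\nabla_a\nu=\tfrac12\nabla_a|\nabla\nu|^2=0$, so the second term vanishes identically. This is the step where the $|\nabla\nu|=1$ normalisation enters and it is what keeps the calculation clean.

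For the surviving term I would use $\nabla_b\hat n_a\stackrel{\widetilde\Sigma}{=}\nabla_a\nabla_b\sigma-n_a\,n^c\nabla_b\nabla_c\sigma$ (valid on $\widetilde\Sigma$ where $|n|=1$), giving $(\nabla_{\hat m}\hat n)\cdot\hat m\stackrel{\widetilde\Sigma}{=}\hat m^a\hat m^b\nabla_a\nabla_b\sigma-(\hat m\cdot\hat n)\,\hat m^b n^c\nabla_b\nabla_c\sigma$. Here the conformal-unit (singular Yamabe) normalisation is key: since $|n|^2=1-2\rho\sigma+O(\sigma^3)$ is constant to first order along $\widetilde\Sigma$, one gets $n^c\nabla_b\nabla_c\sigma=\tfrac12\nabla_b|n|^2\stackrel{\widetilde\Sigma}{=}-\rho\,n_b$, hence the normal--normal Hessian is $\hat n^a\hat n^b\nabla_a\nabla_b\sigma\stackrel{\widetilde\Sigma}{=}-\rho=H_\Sigma$ while the tangential--tangential part is $\II^\Sigma$. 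Decomposing $\hat m$ (the mixed term dies since $\hat p\cdot n=0$) yields $\hat m^a\hat m^b\nabla_a\nabla_b\sigma\stackrel{\widetilde\Sigma}{=}\cos^2\theta\,H_\Sigma+\sin^2\theta\,\II^\Sigma_{\hat p\hat p}$, and the normalisation-correction term $-(\hat m\cdot\hat n)\,\hat m^b n^c\nabla_b\nabla_c\sigma\stackrel{\widetilde\Sigma}{=}-\cos^2\theta\,H_\Sigma$ cancels the $\cos^2\theta\,H_\Sigma$ contribution, leaving $\sin^2\theta\,\II^\Sigma_{\hat p\hat p}$. Finally I rewrite this using $\IIo^\Sigma_{\hat m\hat m}=\sin^2\theta\,\IIo^\Sigma_{\hat p\hat p}$ (as $\IIo^\Sigma$ annihilates $\hat n$) together with $\II^\Sigma_{\hat p\hat p}=\IIo^\Sigma_{\hat p\hat p}+H_\Sigma$, obtaining $\IIo^\Sigma_{\hat m\hat m}+\sin^2\theta\,H_\Sigma$, which is the claim.

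For the second identity I would run the mirror computation $\nabla_{\hat n}\cos\theta=(\nabla_{\hat n}\hat n)\cdot\hat m+\hat n\cdot(\nabla_{\hat n}\hat m)$ and decompose $\hat n=\cos\theta\,\hat m+\sin\theta\,\hat q$. The term $\hat n\cdot(\nabla_{\hat n}\hat m)=\hat n^a\hat n^b\nabla_a\nabla_b\nu$ is clean on $\Lambda$: since $m^b\nabla_a\nabla_b\nu=0$, only the tangential part $\sin\theta\,\hat q$ survives, producing $\sin^2\theta\,\II^\Lambda_{\hat q\hat q}=\IIo^\Lambda_{\hat n\hat n}+\sin^2\theta\,H_\Lambda$ exactly as before. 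The genuinely different step --- and the main obstacle --- is the acceleration term $(\nabla_{\hat n}\hat n)\cdot\hat m$: unlike $\hat m$, the field $\hat n$ is \emph{not} geodesic, since $\sigma$ is only conformal-unit rather than unit, so this term does not vanish identically on $\Lambda$. Using $n^c\nabla_a\nabla_c\sigma=\tfrac12\nabla_a|n|^2=-\rho\,n_a-\sigma\,\nabla_a\rho+O(\sigma^2)$, I expect to show that $(\nabla_{\hat n}\hat n)\cdot\hat m=\tfrac{\sigma}{|n|^2}\big(-\hat m\cdot\nabla\rho+\tfrac{\cos\theta}{|n|}\,\hat n\cdot\nabla\rho\big)+O(\sigma^2)$, which is proportional to $\sigma$ and hence vanishes on $\partial\Sigma=\widetilde\Sigma\cap\Lambda$ (where $\sigma=0$). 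Thus the acceleration term drops on the intersection locus and the stated formula follows; the delicate point is precisely tracking this $O(\sigma)$ remainder and verifying that it is immaterial, the Lemma being applied only in the boundary integral over $\partial\Sigma$ in which it is needed.
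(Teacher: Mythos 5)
Your argument is correct. For the first identity it is in substance the paper's own proof: both expand $\nabla_{\hat m}(\hat n.\hat m)$ by the Leibniz rule and kill the two acceleration terms using, respectively, that $\nu$ is a unit defining function (so $\nabla_{\hat m}\hat m=\tfrac12\nabla|\nabla\nu|^2=0$) and that $\bm\sigma$ is a conformal unit defining density (so $n^c\nabla_b\nabla_c\sigma\stackrel{\widetilde\Sigma}=-\rho\, n_b$, equivalently $\nabla_{\hat n}\hat n\stackrel{\widetilde\Sigma}=0$); you merely phrase the surviving term via the Hessian of $\sigma$ where the paper writes $\nabla_a\hat n_b\stackrel{\widetilde\Sigma}=\II_{ab}^\Sigma+\hat n_a\nabla_{\hat n}\hat n_b$, which is only a difference of bookkeeping. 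The genuine added value is in the second identity, which the paper dispatches with ``the second follows by symmetry.'' As you correctly observe, the roles of $\sigma$ and $\nu$ are \emph{not} symmetric: $\hat m=\nabla\nu$ is geodesic everywhere, whereas $\nabla_{\hat n}\hat n$ is only $O(\sigma)$, vanishing on $\widetilde\Sigma$ but not on $\Lambda\setminus\widetilde\Sigma$. Read literally as an equality on all of $\Lambda$, the second display therefore holds only modulo the $O(\sigma)$ acceleration term $\hat m.\nabla_{\hat n}\hat n$ that you isolate; your point that this is harmless because the lemma is only ever invoked on $\partial\Sigma$ (through Equation~\nn{further} and the results built on it), where $\sigma=0$, is exactly the right repair and is more careful than the paper's one-line appeal to symmetry. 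A cosmetic quibble: the coefficient $\cos\theta/|n|$ in your $O(\sigma)$ remainder should be $\cos\theta$, but since $|n|=1+O(\sigma)$ this only perturbs the irrelevant $O(\sigma^2)$ part.
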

\begin{proof}
For the first identity, we compute
\begin{eqnarray*}
\nabla_{\hat m}(\hat n.\hat m)&\stackrel{\widetilde\Sigma}=&
\hat m^a (\II_{ab}^\Sigma+\hat n_a\nabla_{\hat n} \hat n_b)\hat m^b
+\hat m^a  \hat n^b (\II_{ab}^\Lambda+\hat m_a\nabla_{\hat m} \hat m_b)
\\[1mm]
&\stackrel{\widetilde\Sigma}=&
\IIo_{\hat m\hat m}+\hat m^a \hat m^b H_\Sigma (g_{ab}-\hat n_a \hat n_b)\, .
\end{eqnarray*}
This yields the first result quoted 
and the second follows by symmetry.
\end{proof}
\noindent
Two further identities are obvious corollaries of this lemma:
\begin{eqnarray}\label{further}
\nabla_{\hat p}\cos\theta&\stackrel{\partial\Sigma} =&
\sin\theta\, \big(\, 
\IIo^\Sigma_{\hat p \hat p}\:  +\, H_\Sigma
-\cos\theta\, (\, \IIo^\Lambda_{\hat q\hat q}+ H_\Lambda)\big)
\, ,\\[2mm]
\nabla_{\hat q}\cos\theta&\stackrel{\partial\Sigma} =&
\sin\theta\, \big(\, 
\IIo^\Lambda_{\hat q \hat q}\   +H_\Lambda
-\cos\theta\, (\, \IIo^\Sigma_{\hat p\hat p}+ H_\Sigma)\big)
\, .\nonumber
\end{eqnarray}
The second of the above identities can also be obtained as a consequence of the following more general result.
\begin{lemma}\label{A3}
$$
\nabla_{\hat q} \hat m_a\stackrel\Lambda=
\IIo^\Lambda_{a\hat q}
+H_\Lambda \hat q
\, ,\quad
\nabla_{\hat q}\hat n_a\stackrel{\widetilde\Sigma}=
 -\cos\theta\,  (\, \IIo^\Sigma_{a\hat p}
+ H_\Sigma \, \hat p_a)\, .
$$
\end{lemma}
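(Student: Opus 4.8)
The plan is to reduce both identities to a single ambient splitting of the covariant derivative of a unit conormal into a second-fundamental-form piece and a normal-acceleration piece, exactly the decomposition already used in the proof of Lemma~\ref{first}. Since $\hat n$ and $\hat m$ are unit covectors, $\hat n^a\nabla_c\hat n_a=0=\hat m^a\nabla_c\hat m_a$, so decomposing the derivative index into tangential and normal parts gives the bulk identities
$$
\nabla_c\hat n_a=\II^\Sigma_{ca}+\hat n_c\,\nabla_{\hat n}\hat n_a\, ,\qquad
\nabla_c\hat m_a=\II^\Lambda_{ca}+\hat m_c\,\nabla_{\hat m}\hat m_a\, ,
$$
where $\II^\Sigma_{ca}=(\delta_c^{\,e}-\hat n_c\hat n^e)\nabla_e\hat n_a$ is the ambient extension of the second fundamental form (and similarly for $\Lambda$). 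These are the same building blocks that drive Lemma~\ref{first}, so I would simply invoke them.

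For the first identity I would contract the $\hat m$ splitting with $\hat q^c$. Because $\hat q.\hat m=0$ by construction of $\hat q$ (and because $\nu$ is a unit defining function, so $\nabla_{\hat m}\hat m=0$ holds identically anyway), the acceleration term drops and $\nabla_{\hat q}\hat m_a\stackrel\Lambda=\II^\Lambda_{\hat q a}$. Splitting $\II^\Lambda_{ca}=\IIo^\Lambda_{ca}+H_\Lambda(g_{ca}-\hat m_c\hat m_a)$ into trace-free and pure-trace parts and again using $\hat q.\hat m=0$ then yields $\II^\Lambda_{\hat q a}=\IIo^\Lambda_{a\hat q}+H_\Lambda\hat q_a$, which is the claim. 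For the second identity the crucial algebraic input is the expansion $\hat q=\sin\theta\,\hat n-\cos\theta\,\hat p$, obtained by eliminating $\hat m$ between the defining relations for $\hat p$ and $\hat q$; in particular $\hat q.\hat n=\sin\theta$. Contracting the $\hat n$ splitting with $\hat q^c$ and using $\hat n^c\II^\Sigma_{ca}=0$ gives $\II^\Sigma_{\hat q a}=-\cos\theta\,\II^\Sigma_{\hat p a}$, so that
$$
\nabla_{\hat q}\hat n_a=-\cos\theta\,\II^\Sigma_{\hat p a}+\sin\theta\,\nabla_{\hat n}\hat n_a\, ,
$$
and the trace/trace-free split $\II^\Sigma_{\hat p a}=\IIo^\Sigma_{a\hat p}+H_\Sigma\hat p_a$ (using $\hat n.\hat p=0$) reproduces the stated right-hand side provided the last term vanishes on $\widetilde\Sigma$.

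That vanishing is the one step needing genuine work, and it is where the singular Yamabe hypothesis enters. For $\hat n=\nabla\sigma/|n|$ a short computation gives $\nabla_{\hat n}\hat n_a=\tfrac1{|n|}(\delta_a^{\,b}-\hat n_a\hat n^b)\nabla_b|n|$, i.e.\ $\nabla_{\hat n}\hat n$ is the tangential gradient of $|n|$. Differentiating the conformal unit condition $|n|^2=1-2\rho\sigma+{\mathcal O}(\sigma^3)$ shows $\nabla_a|n|\stackrel{\widetilde\Sigma}=-\rho\,\hat n_a$ is purely normal, so its tangential projection—and hence $\nabla_{\hat n}\hat n_a$—vanishes on $\widetilde\Sigma$. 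I expect this geodesy-of-$\hat n$ computation to be the main (though short) obstacle; everything else is bookkeeping with the two splittings and the angle relations. As a consistency check, feeding the resulting identity into $\nabla_{\hat q}\cos\theta=(\nabla_{\hat q}\hat n^a)\hat m_a+\hat n^a\nabla_{\hat q}\hat m_a$ should reproduce the second line of Equation~\nn{further}, as claimed in the text.
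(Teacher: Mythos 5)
Your proof is correct and follows essentially the same route as the paper: both decompose $\nabla_c\hat n_a$ (resp.\ $\nabla_c\hat m_a$) into second fundamental form plus normal acceleration, expand $\hat q$ in terms of $\hat n,\hat m$ (equivalently $\hat n,\hat p$), and split $\II$ into trace-free and trace parts. The only difference is that you explicitly justify the vanishing of $\nabla_{\hat n}\hat n_a$ along $\widetilde\Sigma$ via the conformal unit condition $|n|^2=1-2\rho\sigma+\mathcal{O}(\sigma^3)$, a step the paper drops silently in its displayed computation; this is a welcome clarification rather than a deviation.
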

\begin{proof}
The first identity is straightforward since~$\nabla_{\hat q}=\tfrac{1}{\sin\theta}\,  \hat n^a\, (\nabla
_a-\hat m_a \nabla_{\hat m})$ implies that
$$
\nabla_{\hat q} \hat m_a\stackrel\Lambda=
\frac{\IIo^\Lambda_{a\hat n}
+H_\Lambda(\hat n_a - \cos\theta\, \hat m_a)}{\sin\theta}\, .
$$
For the second some computation is needed:
\begin{eqnarray*}
\nabla_{\hat q}\hat n_a&\stackrel{\widetilde\Sigma}=&
\frac{\hat q^b (\II_{ab}^\Sigma + \hat n_b \nabla_{\hat n} \hat n_a)}{\sin\theta}
\ \stackrel{\widetilde\Sigma}= \ -\cot\theta\, \hat m^b \big(\IIo^\Sigma_{ab}+H_\Sigma(g_{ab}-\hat n_a\hat n_b)
\big)\\[1mm]
&\stackrel{\widetilde\Sigma}=&-\cot\theta\,  \big(\, \IIo^\Sigma_{a\hat m}
+ H_\Sigma (\hat m_a
-\cos\theta\,  \hat n_a)\big)\, .
\end{eqnarray*}
The quoted result now follows easily from the last expression above. 
\end{proof}

\medskip

Now we can give a useful formula for the geodesic curvature:
\begin{proposition}\label{geodesic}
The geodesic curvature of~$\partial \Sigma\hookrightarrow {\widetilde\Sigma}$ is  given by
$$
\kappa_{\partial\Sigma}\  \stackrel{\partial\Sigma}=\ 
\frac{H_\Lambda-\IIo^\Lambda_{\hat q\hat q}
-\cos\theta\,  (H_\Sigma-\IIo^
{\Sigma}_{\hat p\hat p})}{\sin\theta}
\, .
$$
\end{proposition}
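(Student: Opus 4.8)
The plan is to evaluate the intrinsic surface divergence in \nn{kappa} by passing to an orthonormal frame adapted to $\partial\Sigma$ and reducing everything to the second fundamental forms of $\widetilde\Sigma$ and $\Lambda$. Along $\partial\Sigma$ the unit tangent to the curve, call it $\hat t$, lies in both surfaces and is orthogonal to each of $\hat n,\hat m,\hat p,\hat q$; hence $\{\hat p,\hat t\}$ is an orthonormal frame for $T\widetilde\Sigma$ and $\{\hat q,\hat t\}$ one for $T\Lambda$ along $\partial\Sigma$. Since $\hat p$ is tangent to $\widetilde\Sigma$ there, its intrinsic divergence equals the tangentially projected ambient divergence, so
$$
\kappa \stackrel{\partial\Sigma}= \hat p^a\hat p^b\nabla_a\hat p_b + \hat t^a\hat t^b\nabla_a\hat p_b\, .
$$
The first term vanishes because $\hat p$ has unit length in the bulk (a direct consequence of $|\hat n|=|\hat m|=1$ together with $\hat n.\hat m=\cos\theta$), whence $\hat p^b\nabla_a\hat p_b=\tfrac12\nabla_a|\hat p|^2=0$.

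Next I would substitute $\hat p=(\hat m-\cos\theta\,\hat n)/\sin\theta$ into the surviving term. Because $\hat t$ is orthogonal to both $\hat m$ and $\hat n$ along $\partial\Sigma$, every contribution in which a derivative lands on the scalar factors $\cos\theta$ or $1/\sin\theta$ drops out, being contracted against $\hat t^b\hat m_b=0$ or $\hat t^b\hat n_b=0$. What remains is
$$
\kappa \stackrel{\partial\Sigma}= \frac{1}{\sin\theta}\big(\hat t^a\hat t^b\nabla_a\hat m_b - \cos\theta\,\hat t^a\hat t^b\nabla_a\hat n_b\big) = \frac{\II^\Lambda_{\hat t\hat t}-\cos\theta\,\II^\Sigma_{\hat t\hat t}}{\sin\theta}\, ,
$$
where in the last step I use that $\hat t$ is tangent to $\Lambda$ and to $\widetilde\Sigma$, so these projected ambient derivatives are precisely the second fundamental forms evaluated on $\hat t$, with the sign convention fixed in the proof of Lemma~\ref{first}.

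Finally I would split each second fundamental form into trace and trace-free parts, $\II^\Lambda_{\hat t\hat t}=\IIo^\Lambda_{\hat t\hat t}+H_\Lambda$ and $\II^\Sigma_{\hat t\hat t}=\IIo^\Sigma_{\hat t\hat t}+H_\Sigma$ (using that $\hat t$ is unit), and invoke tracelessness in two dimensions: in the frame $\{\hat q,\hat t\}$ for $\Lambda$ one has $\IIo^\Lambda_{\hat t\hat t}=-\IIo^\Lambda_{\hat q\hat q}$, and in $\{\hat p,\hat t\}$ for $\widetilde\Sigma$, $\IIo^\Sigma_{\hat t\hat t}=-\IIo^\Sigma_{\hat p\hat p}$. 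This gives $\II^\Lambda_{\hat t\hat t}=H_\Lambda-\IIo^\Lambda_{\hat q\hat q}$ and $\II^\Sigma_{\hat t\hat t}=H_\Sigma-\IIo^\Sigma_{\hat p\hat p}$, and the claimed formula follows at once. The one point needing care—and the main obstacle—is the bookkeeping of the adapted frames and sign conventions: verifying that $\hat t\perp\hat n,\hat m$ so that the trigonometric derivatives genuinely decouple, and that the projected ambient derivatives reproduce $\II^\Lambda_{\hat t\hat t}$ and $\II^\Sigma_{\hat t\hat t}$ with the conventions already in force. The identities \nn{further} and Lemma~\ref{A3} provide an independent route to the relevant boundary derivative and thus a useful consistency check on the signs.
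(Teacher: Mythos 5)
Your proof is correct, but it follows a genuinely different route from the paper's. The paper evaluates the tangential divergence $(\nabla^a-\hat n^a\nabla_{\hat n})\big((\hat m_a-\cos\theta\,\hat n_a)/\sin\theta\big)$ head on: this produces the combination $\big(2H_\Lambda-\II^\Lambda_{\hat n\hat n}-2\cos\theta\, H_\Sigma\big)/\sin\theta-\nabla_{\hat p}\log\sin\theta$, and the derivative-of-angle term is then eliminated using the identities \nn{further} (which rest on Lemma~\ref{first}), after which a short trigonometric recombination gives the stated formula. You instead decompose the divergence in the adapted frame $\{\hat p,\hat t\}$, kill the $\hat p\hat p$ component via $|\hat p|=1$ (which indeed holds in the bulk, since $|\hat n|=|\hat m|=1$ and $\hat n.\hat m=\cos\theta$ there), observe that all terms where the derivative hits $\cos\theta$ or $1/\sin\theta$ are annihilated by $\hat t^b\hat n_b=\hat t^b\hat m_b=0$, and land directly on $(\II^\Lambda_{\hat t\hat t}-\cos\theta\,\II^\Sigma_{\hat t\hat t})/\sin\theta$; two-dimensional tracelessness then converts $\IIo_{\hat t\hat t}$ into $-\IIo_{\hat q\hat q}$ and $-\IIo_{\hat p\hat p}$ respectively. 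Your identification of the projected derivatives with the second fundamental forms, and the split $\II_{ab}=\IIo_{ab}+H(g_{ab}-\hat n_a\hat n_b)$, match the conventions used in the proof of Lemma~\ref{first}, so the signs are consistent. The net effect is that your argument bypasses Lemma~\ref{first} and Equation~\nn{further} entirely and is shorter and more transparent for this particular proposition; the paper's computation is less economical here but reuses the $\nabla\cos\theta$ machinery that it needs anyway for the subsequent lemmas on $\nabla_{\hat q}{\mathcal S}_\Lambda$ and the boundary density ${\bm T}^{\scalebox{.7}{$\bm\sigma$}}$.
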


\begin{proof}
We begin by massaging  Expression~\nn{kappa}:
\begin{eqnarray*}
\kappa_{\partial\Sigma}
&\stackrel{\partial\Sigma}=&
(\nabla^a - \hat n^a \nabla_{\hat n} )\Big(
\frac{\hat m_a -   \cos\theta\, \hat n_a}{\sin\theta}\Big)\\
&\stackrel{\partial\Sigma}=&
\frac{
2H_\Lambda
-\II^\Lambda_{\hat n\hat n}-2 \cos\theta\,  H_\Sigma
}{\sin\theta}
- \nabla_{\hat p} \log\sin\theta\\[2mm]
&\stackrel{\partial\Sigma}=&
\frac{
2H_\Lambda
-2 \cos\theta\,  H_\Sigma
}{\sin\theta}
-\sin\theta\, (\IIo^\Lambda_{\hat q\hat q}+H_\Lambda)
- \nabla_{\hat p} \log\sin\theta
\, .
\end{eqnarray*}
Using the second of the two identities in Equation~\nn{further} we have
$$
\nabla_{\hat p} \log\sin\theta\, 
\stackrel{\partial\Sigma}=\, 
-\cot\theta
\big(\, 
\IIo^\Sigma_{\hat p \hat p}\:  +\, H_\Sigma
-\cos\theta\, (\, \IIo^\Lambda_{\hat q\hat q}+ H_\Lambda)\big)
\, .
$$
Combining the above ingredients gives the result.
\end{proof}

We now proceed to our computation of the boundary contribution to the anomaly, and begin with a pair of technical lemmas:
\begin{lemma}\label{SL}
The~${\mathcal S}$-curvature of the pullback of~$\sigma$ and~$g$ to~$\Lambda$ evaluated along~$\partial \Sigma$ is given by 
$$
{\mathcal S}_{\Lambda}\stackrel{\partial\Sigma}=
\sin^2\theta\, .
$$
\end{lemma}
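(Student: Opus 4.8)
The plan is to reduce ${\mathcal S}_\Lambda$ to the squared length of a tangential gradient, at which point the conformal unit condition finishes the computation. Recalling the definition of the ${\mathcal S}$-curvature in Equation~\nn{scurvy}, the ${\mathcal S}$-curvature of the defining density $[g_\Lambda\, ;\, \sigma|_\Lambda]$ intrinsic to the surface $\Lambda$ is
$$
{\mathcal S}_\Lambda=|\nabla^{g_\Lambda}\sigma|^2_{g_\Lambda}+2\rho_\Lambda\, \sigma\, ,\qquad
\rho_\Lambda=-\tfrac12\big(\Delta^{g_\Lambda}\sigma+\J^{g_\Lambda}\sigma\big)\, ,
$$
where all operators are those of the induced metric $g_\Lambda$ and $\dim\Lambda=2$. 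The key observation is that $\partial\Sigma$ is exactly the zero locus of $\sigma|_\Lambda$ inside $\Lambda$, so along $\partial\Sigma$ one has $\sigma=0$ and the entire term $2\rho_\Lambda\,\sigma$, which carries an explicit factor of $\sigma$, drops out. In particular the (otherwise complicated) intrinsic Laplacian and curvature data assembled in $\rho_\Lambda$ never need be computed, and the problem reduces to evaluating $|\nabla^{g_\Lambda}\sigma|^2_{g_\Lambda}$ along $\partial\Sigma$.

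Next I would identify the intrinsic gradient with the tangential projection of the ambient gradient $n=\nabla\sigma$. Since $g_\Lambda$ agrees with $g$ on vectors tangent to $\Lambda$ and $\hat m$ is the unit conormal of $\Lambda$, the intrinsic gradient $\nabla^{g_\Lambda}\sigma$ is the $g$-orthogonal projection of $n$ onto $T\Lambda$, namely $n^\top=n-(n.\hat m)\,\hat m$, so that $|\nabla^{g_\Lambda}\sigma|^2_{g_\Lambda}=|n^\top|^2_g$. Expanding the orthogonal decomposition gives $|n^\top|^2=|n|^2-(n.\hat m)^2$, and writing $n=|n|\,\hat n$ together with the defining relation $\hat n.\hat m=\cos\theta$ yields $n.\hat m=|n|\cos\theta$. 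Therefore
$$
|n^\top|^2=|n|^2\big(1-\cos^2\theta\big)=|n|^2\sin^2\theta\, .
$$

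Finally I would invoke the conformal unit defining density hypothesis $|n|^2=1-2\rho\sigma+{\mathcal O}(\sigma^3)$ of this section (equivalently~\nn{one}): restricting to $\partial\Sigma\subset\widetilde\Sigma$, where $\sigma=0$, gives $|n|^2\stackrel{\widetilde\Sigma}=1$. Combining the three displays yields ${\mathcal S}_\Lambda\stackrel{\partial\Sigma}=\sin^2\theta$, as claimed. There is no real analytic obstacle in this argument; the only point demanding care is the bookkeeping that the $\rho_\Lambda\,\sigma$ contribution vanishes identically on $\partial\Sigma$, so that the result rests solely on the elementary orthogonal-decomposition identity $|n^\top|^2=|n|^2\sin^2\theta$ together with the normalisation $|n|^2\stackrel{\widetilde\Sigma}=1$.
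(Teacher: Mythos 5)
Your proof is correct and follows essentially the same route as the paper: both reduce $\mathcal{S}_\Lambda$ along $\partial\Sigma$ to the squared length of the tangential projection $n-(n.\hat m)\hat m$, observe that all $\sigma$-proportional terms (the $\rho_\Lambda\sigma$ piece) drop on the zero locus, and invoke $|n|\stackrel{\widetilde\Sigma}{=}1$ from the conformal unit defining density condition to land on $\sin^2\theta$. The only difference is that the paper carries the computation to accuracy $\mathcal{O}(\sigma^2)$ because the resulting formula is reused in the following lemma for $\nabla_{\hat q}\mathcal{S}_\Lambda$, whereas you evaluate directly on $\partial\Sigma$; for the statement as given, your shortcut is perfectly adequate.
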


\begin{proof}
For later use, we compute the~${\mathcal S}$-curvature of~$\Lambda$\
to accuracy~${\mathcal O}(\sigma^2)$ along~$\Lambda$:
\begin{eqnarray}
{\mathcal S}_{\Lambda}
&=&
|\nabla \sigma - \hat m \nabla_{\hat m}\sigma|^2
-\sigma\big(
(\nabla^a - \hat m^a \nabla_{\hat m})
(\nabla_a - \hat m_a \nabla_{\hat m})\sigma
+\J_\Lambda \, \sigma
\big)
\nonumber
 \\[1mm]
&=&
|n|^2(1-(\hat n.\hat m)^2)-
\sigma\big(\Delta \sigma
-\hat m^a \nabla_{\hat m}n_a-\hat m.n\,  (\nabla_a \hat m^a)
\big)
+{\mathcal O}(\sigma^2)\, .\label{accurate}
\end{eqnarray}
Since~$\sigma=0$ and~$|n|=1$ along~$\partial\Sigma$, and~$\hat n.\hat m=\cos\theta$, the quoted result now follows.
\end{proof}

\begin{lemma}
$$
\nabla_{\hat q}\, {\mathcal S}_{\Lambda}
\stackrel{\partial\Sigma}=
\sin\theta\big((1+\cos^2\theta)\, \IIo_{\hat p\hat p}^\Sigma
-2\cos\theta\, 
\IIo^\Lambda_{\hat q\hat q}\big)\, .
$$
\end{lemma}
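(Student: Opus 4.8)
The plan is to differentiate the accurate expansion~\nn{accurate} of the $\mathcal S$-curvature of $\Lambda$ established in Lemma~\ref{SL} in the direction $\hat q$ and then restrict to $\partial\Sigma$. Writing that expansion as ${\mathcal S}_\Lambda = A - \sigma B + {\mathcal O}(\sigma^2)$ with $A = |n|^2\big(1-(\hat n.\hat m)^2\big) = |n|^2 - (n.\hat m)^2$ and $B = \Delta\sigma - \hat m^a\nabla_{\hat m}n_a - (\hat m.n)\,\nabla_a\hat m^a$, I first observe that $\hat q$ is tangent to $\Lambda$, so $\nabla_{\hat q}$ is an intrinsic derivative along $\Lambda$ and the expansion may legitimately be differentiated. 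Since $\sigma \stackrel{\partial\Sigma}{=} 0$, the $\nabla_{\hat q}[{\mathcal O}(\sigma^2)]$ and $\sigma\,\nabla_{\hat q}B$ contributions both drop on $\partial\Sigma$, leaving $\nabla_{\hat q}{\mathcal S}_\Lambda \stackrel{\partial\Sigma}{=} \nabla_{\hat q}A - (\nabla_{\hat q}\sigma)\,B$. A short computation using $n=\nabla\sigma$ and the decomposition $\hat m = \cos\theta\,\hat n + \sin\theta\,\hat p$ gives $\nabla_{\hat q}\sigma \stackrel{\partial\Sigma}{=} \hat q.n = \sin\theta$, so everything reduces to evaluating $\nabla_{\hat q}A$ and $B$ along $\partial\Sigma$.

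For $\nabla_{\hat q}A$ I would use $A = |n|^2 - (n.\hat m)^2$ together with the conformal unit condition $|n|^2 = 1 - 2\rho\sigma + {\mathcal O}(\sigma^3)$ and $\rho|_\Sigma = -H_\Sigma$, which give $\nabla_{\hat q}|n|^2 \stackrel{\partial\Sigma}{=} 2H_\Sigma\sin\theta$; combining this with $n.\hat m = |n|\cos\theta$ and Equation~\nn{further} for $\nabla_{\hat q}\cos\theta$ yields $\nabla_{\hat q}A \stackrel{\partial\Sigma}{=} 2H_\Sigma\sin\theta - 2\cos\theta\sin\theta\big(\IIo^\Lambda_{\hat q\hat q} + H_\Lambda - \cos\theta\,\IIo^\Sigma_{\hat p\hat p}\big)$. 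For $B$ I would evaluate its three terms using $\Delta\sigma \stackrel{\partial\Sigma}{=} -3\rho = 3H_\Sigma$, the divergence $\nabla_a\hat m^a \stackrel{\partial\Sigma}{=} 2H_\Lambda$ (recalling that $\nu$ is taken to be a unit defining function, so $\hat m=\nabla\nu$ is geodesic), and the second fundamental form decomposition $\nabla_a\hat n_b \stackrel{\widetilde\Sigma}{=} \II^\Sigma_{ab} + \hat n_a\nabla_{\hat n}\hat n_b$ employed in Lemmas~\ref{first} and~\ref{A3}. The two key simplifications are that the acceleration contraction $\hat m^b\nabla_{\hat n}\hat n_b = \nabla_{\hat m}|n| - \cos\theta\,\nabla_{\hat n}|n|$ vanishes on $\partial\Sigma$ (both terms equal $H_\Sigma\cos\theta$), and that $\IIo^\Sigma_{\hat m\hat m} = \sin^2\theta\,\IIo^\Sigma_{\hat p\hat p}$ since $\IIo^\Sigma$ annihilates $\hat n$; these give $B \stackrel{\partial\Sigma}{=} 2H_\Sigma - \sin^2\theta\,\IIo^\Sigma_{\hat p\hat p} - 2\cos\theta\,H_\Lambda$.

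Finally I would assemble $\nabla_{\hat q}{\mathcal S}_\Lambda \stackrel{\partial\Sigma}{=} \nabla_{\hat q}A - \sin\theta\,B$. The $H_\Sigma\sin\theta$ terms and the $\cos\theta\sin\theta\,H_\Lambda$ terms cancel in the difference, and the two surviving $\IIo^\Sigma_{\hat p\hat p}$ contributions combine via $2\cos^2\theta + \sin^2\theta = 1 + \cos^2\theta$ to leave precisely $\sin\theta\big((1+\cos^2\theta)\,\IIo^\Sigma_{\hat p\hat p} - 2\cos\theta\,\IIo^\Lambda_{\hat q\hat q}\big)$, as claimed. I expect the main obstacle to be the careful evaluation of $B$, and in particular of the term $\hat m^a\hat m^b\nabla_a n_b$, whose normal/tangential split produces the acceleration contraction that must be shown to drop and the factor $\IIo^\Sigma_{\hat m\hat m} = \sin^2\theta\,\IIo^\Sigma_{\hat p\hat p}$; keeping the conformal unit normalization and the signs of $\rho$ and $\Delta\sigma$ consistent is the delicate part, whereas the concluding trigonometric cancellations are routine.
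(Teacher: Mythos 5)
Your proposal is correct and follows essentially the same route as the paper: differentiate the ${\mathcal O}(\sigma^2)$-accurate expansion of ${\mathcal S}_\Lambda$ from Lemma~\ref{SL} along the $\Lambda$-tangential direction $\hat q$, use $\nabla_{\hat q}\sigma\stackrel{\partial\Sigma}=\sin\theta$ and Equation~\nn{further} for $\nabla_{\hat q}\cos\theta$, and assemble. Your intermediate evaluations (in particular $\hat m^a\hat m^b\nabla_a n_b\stackrel{\partial\Sigma}=H_\Sigma+\sin^2\theta\,\IIo^\Sigma_{\hat p\hat p}$ and the vanishing of the acceleration contraction) agree with the paper's intermediate display, and the final trigonometric cancellation checks out.
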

\begin{proof}
Our starting point is Equation~\nn{accurate}. From there and using
~$$\nabla_{\hat q}\sigma\stackrel{\partial\Sigma}=\frac{1
-(\hat n.\hat m)^2}{\sin\theta}=\sin\theta
\, ,$$ we have
\begin{eqnarray*}
\nabla_{\hat q}\, {\mathcal S}_{\Lambda}
&\stackrel{\partial\Sigma}=&
2 \sin^3\theta\, H_\Sigma
-2\cos\theta\, \nabla_{\hat q} \cos\theta
\\[1mm]&&
-\sin\theta
\big(
3H_\Sigma 
-\hat m^a \hat m^b (\IIo_{ab}^\Sigma+ g_{ab}H_\Sigma)
-2\cos\theta\,  H_\Lambda
\big)
\, .
\end{eqnarray*}
Employing Equation~\nn{further},
the result follows easily.
\end{proof}

We now have enough ingredients to compute the boundary integrand~${\bm T}^{\scalebox{.7}{$\bm\sigma$}}$ of Equation~\nn{J}.
\begin{proposition}
The weight~$w=-1$ density ${\bm T}^{\scalebox{.7}{$\bm\sigma$}}$ is given by
$$
{\bm T}^{\scalebox{.7}{$\bm\sigma$}}
_{\scalebox{.7}{$\bm\tau\!=\![g ; 1]$}}=
\kappa_{\partial \Sigma}
+
\frac{\,2( \bm \IIo^\Lambda_{\scalebox{.7}{$\bm{\hat q}\bm {\hat q}$}}-\cos\theta\ \bm \IIo^\Sigma_{\scalebox{.7}{$\bm{\hat p}\bm {\hat p}$}} )}{\sin^3\theta}
-\cot\theta\  {\bm \IIo^\Sigma_{\scalebox{.7}{$\bm{\hat p}\bm {\hat p}$}}}\, .
$$
\end{proposition}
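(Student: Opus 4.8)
The plan is to evaluate the two terms of the boundary integrand $\bm T^{\scalebox{.7}{$\bm\sigma$}}$ displayed in Equation~\nn{J} separately, working throughout in the true scale $\bm\tau=[g\,;1]$ so that every density collapses to an ordinary tensor on $(M,g)$. In this scale $\bm n^{\scalebox{.7}{$\bm\tau$}}\to n=\nabla\sigma$ and $\bm{\hat n}_{\partial D}\to\hat m$ (taking $\nu$ to be a unit defining function for $\Lambda:=\partial D$, as permitted), and the coupled gradient recovers the exterior derivative. Both terms are then restricted to $\partial\Sigma$, where all the geometric data assembled in Lemmas~\ref{first}, \ref{A3}, \ref{SL} and Proposition~\ref{geodesic} become available.

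First I would dispatch the term $\tfrac{\cos\theta}{\sqrt{\bm{\mathcal S}_{\partial D}}}\,\D\log\bm\tau$. Applying the action of the Laplace--Robin operator on log densities, Equation~\nn{logact}, in dimension $d=3$ gives $\D\log\bm\tau\stackrel{\widetilde\Sigma}=[g\,;\rho]$; since $-\rho|_\Sigma=H_\Sigma$, and $\sqrt{\bm{\mathcal S}_\Lambda}\stackrel{\partial\Sigma}=\sin\theta$ by Lemma~\ref{SL}, this contribution is just $-\cot\theta\,H_\Sigma$. (That the coefficient is $\D$ rather than ${\mathcal L}$, and that the ambient $\sqrt{\bm{\mathcal S}}$ is absent, are consequences of the conformal unit condition~\nn{one}, already built into~\nn{J}.)

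The substantive work lies in the second term, which in the scale $g$ is the directional derivative $\nabla_{\hat q}\!\big((m\cdot n)/{\mathcal S}_\Lambda\big)$ along $\Lambda$, with $\hat q$ the unit conormal of $\partial\Sigma\hookrightarrow\Lambda$. I would expand this by the quotient rule and evaluate factor by factor at $\partial\Sigma$. For the numerator I would write $m\cdot n=|n|\cos\theta$, compute $\nabla_{\hat q}|n|\stackrel{\partial\Sigma}=H_\Sigma\sin\theta$ from the conformal unit condition $|n|^2=1-2\rho\sigma+{\mathcal O}(\sigma^3)$ together with $\nabla_{\hat q}\sigma\stackrel{\partial\Sigma}=\sin\theta$, and take $\nabla_{\hat q}\cos\theta$ from the second identity of Equation~\nn{further}; the mean-curvature pieces conspire to give $\nabla_{\hat q}(m\cdot n)\stackrel{\partial\Sigma}=\sin\theta\,(\IIo^\Lambda_{\hat q\hat q}+H_\Lambda-\cos\theta\,\IIo^\Sigma_{\hat p\hat p})$. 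For the denominator I would use ${\mathcal S}_\Lambda\stackrel{\partial\Sigma}=\sin^2\theta$ (Lemma~\ref{SL}) and the formula for $\nabla_{\hat q}{\mathcal S}_\Lambda$ supplied by the lemma immediately preceding this proposition.

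Finally I would assemble both terms over a common power of $\sin\theta$ and reorganize. The key move is to recognize, via Proposition~\ref{geodesic}, the combination $\big(H_\Lambda-\IIo^\Lambda_{\hat q\hat q}-\cos\theta(H_\Sigma-\IIo^\Sigma_{\hat p\hat p})\big)/\sin\theta=\kappa_{\partial\Sigma}$; after splitting this off, all remaining $H_\Sigma,H_\Lambda$ dependence cancels and only trace-free second fundamental form terms survive, and a short trigonometric simplification (using $\sin^2\theta+\cos^2\theta=1$ to reduce $2\sin^2\theta\cos\theta+\cos\theta(1+\cos^2\theta)$) yields exactly the stated form. The main obstacle is purely organizational: threading the $H_\Sigma$, $H_\Lambda$ and $\IIo$ terms correctly through the quotient rule and the trigonometric recombination, and in particular verifying that the geodesic-curvature combination is extracted cleanly so that no mean-curvature terms remain outside $\kappa_{\partial\Sigma}$.
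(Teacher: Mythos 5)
Your proposal is correct and follows essentially the same route as the paper's own proof: evaluate $\D\log\bm\tau\stackrel{\widetilde\Sigma}=[g\,;-H_\Sigma]$ via~\nn{logact} and the unit condition, expand $\nabla_{\hat q}\big(\hat m.n/{\mathcal S}_\Lambda\big)$ by the quotient rule using Lemma~\ref{SL}, the lemma for $\nabla_{\hat q}{\mathcal S}_\Lambda$ and Equation~\nn{further}, then extract $\kappa_{\partial\Sigma}$ via Proposition~\ref{geodesic} and simplify $\cos\theta(2\sin^2\theta+1+\cos^2\theta)=\cos\theta(2+\sin^2\theta)$. The intermediate values you quote (e.g.\ $\nabla_{\hat q}(\hat m.n)\stackrel{\partial\Sigma}=\sin\theta(\IIo^\Lambda_{\hat q\hat q}+H_\Lambda-\cos\theta\,\IIo^\Sigma_{\hat p\hat p})$ and the cancellation of all mean-curvature terms into $\kappa_{\partial\Sigma}$) agree with the paper's displayed computation.
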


\begin{proof}
Let us denote $T^\sigma$ according to
${\bm T}^{\scalebox{.7}{$\bm\sigma$}}=:[g\, ; T^\sigma]$.
Then 
since~$[g\, ;\, \sigma]$ is a conformal unit defining density it follows from Equation~\nn{logact} that 
$\D  \log \bm \tau\stackrel{{\widetilde\Sigma}} = [g\, ;\, -H_\Sigma ]$
for~$\bm \tau=[g\, ;\, 1]$. Thus we have
\begin{eqnarray*}
T^\sigma&\stackrel{\partial\Sigma}=&-\cot\theta\, H_\Sigma+\nabla_{\hat q}\big(\hat m. n/{\mathcal S}_\Lambda\big)\\[1mm]
&\stackrel{\partial\Sigma}=&
\frac{
\,
\IIo^\Lambda_{\hat q \hat q}\   +H_\Lambda
-\cos\theta\, (\, \IIo^\Sigma_{\hat p\hat p}+ H_\Sigma)
}{\sin\theta}
-\frac{\cos\theta\, \big((1+\cos^2\theta)\, \IIo_{\hat p\hat p}^\Sigma
-2\cos\theta\, 
\IIo^\Lambda_{\hat q\hat q}\big)
}
{\sin^3\theta}
\end{eqnarray*}
Comparing the above display to the result for the geodesic curvature in Proposition~\ref{geodesic}
it follows that
\begin{eqnarray*}
T^\sigma
&\stackrel{\partial\Sigma}=&
\kappa\, +\, 
\frac{2\, \IIo^\Lambda_{\hat q\hat q}-\cos\theta\, (2+\sin^2\theta)\, \IIo^\Sigma_{\hat p\hat p}}{\sin^3\theta}
\, .
\end{eqnarray*}
Expressing the above display in terms of densities gives the quoted result.
\end{proof}

\begin{proof}[Proof of Theorem~\ref{singanomaly}]
Theorem~\ref{singanomaly}.
is proved by 
combining the above Proposition
with Equation~\nn{J} and the Chern--Gau\ss--Bonnet Formula~\nn{GB}.
\end{proof}

\begin{remark}
The divergences given by Theorem~\ref{divergences} (specialized to $d=3$) are simple to compute in the regulator~$\bm \tau=[g\, ;\, 1]$ by using Equation~\nn{LR} for the Laplace-Robin operator and
Lemma~\ref{SL} to handle the ${\mathcal S}$-curvature of~$\partial D$. The result is
\begin{equation}\label{div3}
\Vol_\varepsilon=\frac{A_{\Sigma}}{2\varepsilon^2}
+\frac1\varepsilon
\, \Big(\int_{\Sigma} H_\Sigma
-\int_{\partial\Sigma}
\cot\theta
\Big)+\cdots \, .
\end{equation}
Here $A_{\Sigma}$ is the area of the hypersurface $\Sigma$ measured by the metric choice $g$.
\end{remark}

\addtocontents{toc}{\SkipTocEntry}
\subsection{Example}

We simplify the example of Section~\ref{EX1}
by specializing the surface of revolution to a paraboloid with defining function
$$
\sigma=x-\frac12 r^2\, .
$$
This amounts to the choice $f(r)=\tfrac12 r^2$ in the previous example, and gives a geometry simple enough that we can
easily solve its singular Yamabe problem. We keep the regions~${D_+}$ and~$ D$ the same as the earlier example.

Since the anomaly only depends on the conformal embedding, we can compute it directly from Theorem~\ref{singanomaly}. We have listed the relevant geometric data  below
\begin{eqnarray*}
\chi_{\partial\Sigma} =1\, , \quad \sqrt{\det g_\Sigma} =r\sqrt{1+r^2}\, ,\!\!\!&&\!\!\!
\cos\theta =  - \frac{R}{\sqrt{1+R^2}}\, ,
\quad
\sin\theta=\frac{1}{\sqrt{1+R^2}}\, ,\\[3mm]
\bm\IIo_{ab}^\Sigma\,  \bm \IIo_\Sigma^{ab}=
[g\, ;\, \tfrac12 r^4/(1+r^2)^3] & \Rightarrow &
\int_{\Sigma} \bm\IIo_{ab}^\Sigma\,  \bm \IIo_\Sigma^{ab}=-\frac{\pi}{3}\Big(8-\frac{3R^4+12R^2+8}{\sqrt{(1+R^2)^3}}\Big)\, ,\\[3mm]
\bm \IIo^\Sigma_{\scalebox{.7}{$\bm{\hat p}\bm {\hat p}$}}=\frac12\frac{R^2}{\sqrt{(1+R^2)^3}}\, ,\!\!\!&&\!\!\!
\bm \IIo^\Lambda_{\scalebox{.7}{$\bm{\hat q}\bm {\hat q}$}}
=-\frac{1}{2R}\, .
\end{eqnarray*}
Applying Theorem~\ref{singanomaly} we then find
the anomaly 
\begin{equation}
\label{expA}
{\mathcal A}=\frac{5\pi}3\Big(1-
\frac{1+\tfrac{12}5 R^2+\tfrac{21}{20}R^4}{\sqrt{(1+R^2)^3}}
\Big)\, .
\end{equation}
To compute the divergences 
we need the area and mean curvature of $\Sigma$ in the metric $g$ and its integral over $\Sigma$. These are easy to compute and are given by 
$$
H_\Sigma=-\frac12\frac{r^2+2}{\sqrt{(1+r^2)}}\:\: \mbox{ and } \:\:
\int_{\Sigma} H_\Sigma=
-\frac{\pi}2 \, \big(R^2+\log(1+R^2)\big)\, . 
$$
Also the area of $\Sigma$ in the metric $g$ is given by
$$
A_{\Sigma}=\frac{2\pi}{3}\big(\sqrt{(1+R^2)^3}-1\big)\, .
$$
Using the above data, Equation~\nn{div3}
gives  the divergences in the regulated volume:
\begin{equation}\label{expdiv}
\Vol_\varepsilon=\frac{\pi\, \big(\sqrt{(1+R^2)^3}-1\big)}{\varepsilon^2}
+\frac{\pi\, \big(3 R^2-\log(1+R^2)\big)}{2\varepsilon}+\cdots\, .
\end{equation}

\medskip

The regulated volume integral  in this example is again simple enough that we can compute its divergences and anomaly explicitly. 
We begin by solving the singular Yamabe condition~\nn{singYam}.
For that we
employ the recursion of~\cite{GW15} (see the Appendix of~\cite{GW161,GGHW15} for worked examples) and find that the density~$\bm {\bar \sigma}=[\hh g\, ;\bar \sigma]$ with
\begin{equation}\label{barsi}
\bar \sigma=\hat\sigma
\left(
1-\frac{\hat\sigma}4\, \frac{3r^2+2}{\sqrt{(1+r^2)^3}}
+\frac{\hat\sigma^2}{6}\, \frac{r^2(5r^2+6)}{(1+r^2)^3}
\right)\, ,
\mbox{ where }
\hat\sigma :=\frac{\sigma}{|\nabla\sigma|}=\frac{x-\tfrac12 r^2}{\sqrt{1+r^2}}\, ,
\end{equation}
obeys the singular Yamabe condition
$$\bm {\mathcal S}_{\scalebox{.7}{$\bar{\bm \sigma}$}}=1+\bar{\bm \sigma}^3 \bm B+{\mathcal O}(\bar{\bm \sigma}^4)\, ,$$
with~$${\bm B}= \Big[g\, ;\, 
\frac{1}{12}\frac{r^6+6r^4+24r^2-16}{\sqrt{(1+r^2)^3}}
\Big]\, .$$
Note that this gives an explicit formula for the obstruction density studied in~\cite{GW15,Grahamnew,GWvol,GW161}.
Let us take~${D_+}$ again to be the cylindrical coordinate region of the previous example so that~$\bm{ \hat n}_{\scalebox{.7}{$\partial D$}} =[\hh g\, ;\,  \ext\! r]$.

In the regulator determined by the~$\bm \tau=[\hh g\, ; \, 1]$, the regulating hypersurface~$\Sigma_\varepsilon$  is the zero locus of~$\bar \sigma-\varepsilon$. We can solve this as an equation for~$x$ by quadratures and find that
$$
\bar x=\frac 12 r^2 + \sqrt{1+r^2} \  \varepsilon + \frac14 \frac{3r^2+2}{1+r^2}\  \varepsilon^2
+\frac{1}{24}\, \frac{7r^4+12r^2+12}{\sqrt{(1+r^2)^5}}\, \varepsilon^3
=:\frac12 r^2 +\bar \varepsilon(r)
$$
obeys~$\bar \sigma(\bar x)=\varepsilon + {\mathcal O}(\varepsilon^4)$.
So now, using~Equation~\nn{barsi}, we have
$$
\Vol_\varepsilon=
2\pi \int_0^R r\ext\! r  \int_{\bar x(r)}^\infty \, \frac{\ext\!x}{\bar\sigma^3}\ =\ 
2\pi \int_0^R r\ext\! r \sqrt{(1+r^2)^3} \int_{\bar \varepsilon(r)}^\infty \frac{\ext\!y}{\big[y(1+Ay+By^2)\big]^3}\, ,
$$
where we made the variable substitution~$y=x-\frac12 r^2$, and
used that
$$
\bar \sigma= \frac{y}{\sqrt{1+r^2}}
\, \big(1+Ay+By^2\big)\ \mbox{ with }\
A:=-\tfrac14\tfrac{3r^2+2}{(1+r^2)^2}\  \mbox{ and } \  
B:=\tfrac{1}{6}\tfrac{r^2 (5r^2+6)}{(1+r^2)^4}\, .$$
The~$y$ integration can be performed explicitly, and then expanded in a series about~$y=0$, the leading terms are quoted below
$$
\int \frac{\ext\!y}{\big[y(1+Ay+By^2)\big]^3}=- \frac1{2y^2}
+\frac{3A}{y}+6\Big(A^2-\frac{B}2\Big)\, \log y + {\mathcal O}(y^0)\, .
$$
Since we are only interested in the divergences and log behavior, we only care about the dependence of the~$y$ integral on its lower terminal. Thus by substituting~$y=\bar\varepsilon(r)$ and expanding in powers of~$\varepsilon$ we have
$$
\int_{\bar \varepsilon(r)}^\infty \frac{\ext\!y}{\big[y(1+Ay+By^2)\big]^3}=
\frac{1}{2\varepsilon^2 (1+r^2)}+
\frac{3r^2+2}{2\varepsilon (1+r^2)^{5/2}}
- \log\varepsilon\, \frac{7 r^4+12 r^2 + 12}{8(1+r^2)^4}
+{\mathcal O}(\varepsilon^0)\, .
$$
The radial integrations are now easily performed and yield
\begin{equation*}
\begin{split}
\Vol_\varepsilon\ =\ \frac{\pi\, \big(\sqrt{(1+R^2)^3}-1\big)}{\varepsilon^2}
&\ +\ \frac{\pi\, \big(3 R^2-\log(1+R^2)\big)}{2\varepsilon}\\[1mm] & \ +\ 
\frac{5\pi}3\, \log\varepsilon\, \Big(1-\frac{1+\tfrac{12}{5}R^2+
\tfrac{21}{20}R^4}{\sqrt{(1+R^2)^3}}\Big)+{\mathcal O}(\varepsilon^0)\, .
\end{split}
\end{equation*}
This result matches exactly the divergences
in Equation~\nn{expdiv}
that were generated by Theorem~\ref{divergences}
 (or equivalently Equation~\ref{div3}),
and the anomaly in Equation~\nn
{expA} that was computed from the result of Theorem~\nn{singanomaly}.

%

\newcommand{\msn}[2]{\href{http://www.ams.org/mathscinet-getitem?mr=#1}{#2}}
\newcommand{\hepth}[1]{\href{http://arxiv.org/abs/hep-th/#1}{arXiv:hep-th/#1}}
\newcommand{\maths}[1]{\href{http://arxiv.org/abs/math/#1}{arXiv:math/#1}}
\newcommand{\mathph}[1]{\href{http://lanl.arxiv.org/abs/math-ph/#1}{arXiv:math-ph/#1}}
\newcommand{\arxiv}[1]{\href{http://lanl.arxiv.org/abs/#1}{arXiv:#1}}

\end{document}